\title{On modular linear differential operators and their applications}
\author{Fumitoshi Yamashita}
\address{Graduate School of Mathematical Sciences, The University of Tokyo, 3-8-1 Komaba Meguro-ku Tokyo 153-8914, Japan}
\email{yamast@ms.u-tokyo.ac.jp}
\date{}
\subjclass[2010]{Primary 11F11; Secondary 13N10, 16W50, 34G10}
\keywords{Modular linear differential equations, Modular forms, Ordinary differential equations, Graded rings and modules, Rings of differential operators, Skew polynomial rings}
\newtheoremstyle{mytheorem}
{}
{}
{\slshape}
{}
{\bfseries}
{.}
{4pt}
{\thmname{#1}\ \thmnumber{#2}\thmnote{\hspace{2pt}(#3)}}
\theoremstyle{mytheorem}
\newtheorem{thm}{Theorem}[section]
\newtheorem{cor}[thm]{Corollary}
\newtheorem{prop}[thm]{Proposition}
\newtheorem{lem}[thm]{Lemma}
\newtheorem{conj}[thm]{Conjecture}
\theoremstyle{definition}
\newtheorem{exmp}[thm]{Example}
\theoremstyle{remark}
\newtheorem{rem}[thm]{Remark}
\newcommand{\sltwoz}{\mathrm{SL}(2,\mathbb{Z})}
\newcommand{\calh}{\mathcal{H}}
\newcommand{\calm}{\mathcal{M}}
\newcommand{\cals}{\mathcal{S}}
\newcommand{\calr}{\mathcal{R}}
\newcommand{\calqm}{\mathcal{QM}}
\newcommand{\calqr}{\mathcal{QR}}
\newcommand{\calqreone}{\mathcal{QR}\langle e_1 \rangle}
\newcommand{\calqrftwo}{\mathcal{QR}\langle f_2 \rangle}
\newcommand{\mcalr}{{}^{\mathrm{M}}\calr}
\newcommand{\acalr}{{}^{\mathrm{QM}}\calr}
\newcommand{\ocalm}{\overline{\mathcal{M}}}
\newcommand{\ocalr}{\overline{\mathcal{R}}}
\newcommand{\overs}{\overline{S}}
\newcommand{\osigma}{\overline{\Sigma}}
\newcommand{\zz}{\mathbb{Z}}
\newcommand{\qq}{\mathbb{Q}}
\newcommand{\rr}{\mathbb{R}}
\newcommand{\cc}{\mathbb{C}}
\newcommand{\hol}{\mathrm{Hol}}
\newcommand{\mer}{\mathrm{Mer}}
\newcommand{\triv}{\mathrm{triv}}
\newcommand{\ord}{\mathrm{ord}}
\newcommand{\End}{\mathrm{End}}
\newcommand{\topp}{\mathrm{top}}
\newcommand{\wt}{\mathrm{wt}}
\newcommand{\dwt}{\mathrm{dwt}}
\newcommand{\ch}{\mathrm{ch}}
\newcommand{\rmt}{\mathrm{T}}
\newcommand{\mord}{\mathrm{mord}}
\newcommand{\rmd}{\mathrm{D}}
\newcommand{\ann}{\mathrm{ann}}
\newcommand{\gcrd}{\mathrm{gcrd}}
\newcommand{\lclm}{\mathrm{lclm}}
\newcommand{\srdiff}[2]{D_{#2}^{(#1)}}
\newcommand{\dd}{\delta}
\begin{document}

\maketitle

\begin{abstract}
A formal definition of the graded algebra $\calr$ of modular linear differential operators is given and its properties are studied.
An algebraic structure of the solutions to modular linear differential equations (MLDEs) is shown.
It is also proved that any quasimodular form of weight $k$ and depth $s$ becomes a solution to a monic MLDE of weight $k-s$.
By using the algebraic properties of $\calr$, linear differential operators which map the solution space of a monic MLDE to that of another are determined for sufficiently low weights and orders.
Furthermore, a lower bound of the order of monic MLDEs satisfied by ${E_4}^m{E_6}^n$ is found.
\end{abstract}

\section{Introduction}
A modular linear differential equation (MLDE) is a linear ordinary differential equation written in terms of the Ramanujan-Serre differential operators and modular forms on $\sltwoz$.
One of the characteristic properties of MLDE is that the solution space of an MLDE is invariant under the modular transformations of $\sltwoz$.
The most well-known example of MLDEs is the Kaneko-Zagier equation, which is derived from the study of elliptic curves (\cite{kaneko_zagier1998}).
The solutions and the solution spaces of the Kaneko-Zagier equation are studied in many papers (\cite{kaneko_koike2003}, \cite{kaneko_nagatomo_sakai2017} etc.).

Since an MLDE is a linear differential equation, the solution space is the kernel of the differential operator.
Therefore, its properties are important for studying MLDEs.
Although some papers mention such differential operators, they have not been the subject of research and the algebraic properties remain unknown.

In this paper,  we introduce modular linear differential operators (MLDOs).
They correspond to MLDE's differential operators and MLDEs can be expressed by using  MLDOs.
They constitute a graded $\cc$-algebra, which we denote by $\calr$.
We then investigate algebraic properties of MLDOs.
In particular, we determine a $\cc$-basis of $\calr$ (Theorem \ref{thm201712301243} and \ref{thm201712301244}), prove that $\calr$ is isomorphic to a graded skew polynomial ring (Theorem \ref{thm201805311336}) and establish a division with remainder for MLDOs (Theorems \ref{thm201712301245} and \ref{thm201712301246}).
The division properties are used in the subsequent sections many times.

Utilizing the results mentioned above, we show that the solutions to MLDEs have some algebraic structure (Theorems \ref{thm201804191022}, \ref{thm201804240953} and \ref{thm20180624}).
We also prove that a quasimodular form of weight $k$ and depth $s$ satisfies a monic MLDE of weight $k-s$ (Theorem \ref{thm201804241015}).

We next study linear differential operators acting on the solution spaces of monic MLDEs.
The algebraic properties of MLDOs enable us to give a condition under which an MLDO maps the solution space of a monic MLDE to that of another (Corollary \ref{cor201801191507}).
Note that the idea of such a linear differential operator is utilized in the study of lower order monic MLDEs (see, for example, Proposition 1 and the following Lemma in \cite{kaneko_koike2003}) although it has not been systematically studied.

As applications of Corollary \ref{cor201801191507}, we introduce a family of third order monic MLDEs $\phi_pf=0$  (Proposition \ref{prop201801151023}), where $p\in\rr$ is a parameter and $f$ is the unknown function.
The solutions to the MLDE $\phi_pf=0$ are related to the theta functions of the $\rmd_n$ lattices ($p=2n$, Proposition \ref{prop201801151033}).
Then we determine the linear differential operators of low order and weight which map the solution space of the MLDE $\phi_pf=0$ to the solution space of a third order monic MLDE (Example \ref{ex201807171044}).

Moreover, we apply the algebraic properties of MLDOs to the monic MLDEs satisfied by the modular forms ${E_4}^m{E_6}^n$, where $E_k$ is the Eisenstein series of weight $k$.
Although it is known that ${E_4}^m{E_6}^n$ satisfies some monic MLDE, the order of such a monic MLDE is not clear.
Using the division properties of MLDOs, we give a lower bound of the order of monic MLDEs satisfied by ${E_4}^m{E_6}^n$ (Theorem \ref{thm201801111417}).

The paper is organized as follows.
In Section \ref{201712301223}, we review the theory of modular forms and MLDEs.
At the end of the section, we slightly generalize the setting of Mason's theorem (Theorem \ref{thm201712301224}).
In Section \ref{section201712301238}, we give the formal definition of MLDOs, and then study their algebraic properties.
In Section \ref{section20180614}, we show an algebraic structure of the solutions to MLDEs and prove that every quasimodular form satisfies a monic MLDE.
In Section \ref{section201712301252}, we apply the results of Section \ref{section201712301238} to the linear differential operators which map the solution space of a monic MLDE to that of another.
In Section \ref{section201712301302}, we give a lower bound of the order of monic MLDEs satisfied by ${E_4}^m{E_6}^n$.
In Section \ref{section20180220}, we show some properties of left ideals of $\calr$.
We also give $\cc$-algebras containing $\calr$ and determine their endomorphisms.

\section{Preliminaries}
\label{201712301223}

\subsection{Modular forms}
Hereafter, the term modular form will refer to a vector-valued modular form. 

Recall that the \textit{modular group} $\sltwoz=\{A\in\mathrm{M}(2,\zz)\mid\det(A)=1\}$ is generated by $S=(\begin{smallmatrix}0&-1\\1&0\end{smallmatrix})$ and $T=(\begin{smallmatrix}1&1\\0&1\end{smallmatrix})$.

We will denote by $\hol$ (resp.\ $\mer$) the space of all holomorphic (resp.\ meromorphic) functions on the upper half plane $\calh=\{z\in\cc\mid\Im(z)>0\}$, where $\Im(z)$ is the imaginary part of $z$.

For $z\in\calh$ and $\alpha\in\cc$, we set $e(\alpha)=e^{2\pi i\alpha}$, $q=e^{2\pi i z}$, $q^{\alpha}=e^{2\pi i\alpha z}$ and $\log q=2\pi iz$. 
In this paper, we use $z$ (resp.\ $w$) as a variable in $\calh$ (resp.\ $\cc$).
For $w\neq 0$, $\arg(w)\in(-\pi,\pi]$ is the argument of $w$.
Unless otherwise specified, the logarithm and the power of $w$ are calculated by using $\arg(w)$.

For $A=(\begin{smallmatrix}a&b\\c&d\end{smallmatrix})\in\sltwoz$, we set $Az=\frac{az+b}{cz+d}$ and $j(A,z)=cz+d$.
For $f\in\hol$ and $k\in\rr$, we set $(f|_kA)(z)=j(A,z)^{-k}f(Az)$ and for $F=(f_1,\ldots,f_n)^{\rmt}\in\hol^n$, $F|_kA=(f_1|_k A,\ldots,f_n|_k A)^{\rmt}$.

We say that $f$ \textit{has at most exponential growth around infinity} if there exists $C>0$ and $M>0$ such that $|f(z)|<e^{C\Im(z)}$ for all $z$ with $\Im(z)>M$.
We say that $f$ \textit{is bounded around infinity} if there exists $\epsilon>0$ and $M>0$ such that $|f(z)|<\epsilon$ for all $z$ with $\Im(z)>M$.
We say that $f$ \textit{vanishes around infinity} if for all $\epsilon>0$ there exists an $M>0$ such that $|f(z)|<\epsilon$ for all $z$ with $\Im(z)>M$.

Let $\Gamma$ be a subgroup of $\sltwoz$, $v$ a multiplier of weight $k\in\rr$ on $\Gamma$, and $R$ a representation of $\Gamma$ of dimension $n\in\zz_{>0}$.
Then $F\in\hol^n$ is called a \textit{weakly holomorphic modular form} (resp.\ \textit{holomorphic modular form}, resp.\ \textit{cusp form}) \textit{of weight $k$ on $\Gamma$ with respect to $v$ and $R$} if (1) for all $B\in\Gamma$, $F|_kB=v(B)R(B)F$ and (2) for all $A\in\sltwoz$, each component of $F|_k A$ has at most exponential growth (resp.\ is bounded, resp.\ vanishes) around infinity.
We will denote the spaces of weakly holomorphic modular forms, holomorphic modular forms and cusp forms by $\calm^!(k,\Gamma,v,R)$, $\calm(k,\Gamma,v,R)$ and $\cals(k,\Gamma,v,R)$, respectively.
Additionally, we set $\mathcal{X}(k,v)=\mathcal{X}(k,\sltwoz,v,\triv)$ and $\mathcal{X}(k)=\mathcal{X}_k=\mathcal{X}(k,1)$ for $\mathcal{X}=\calm$, $\cals$ or $\calm^!$, where $\triv$ is the trivial one-dimensional representation of $\sltwoz$.

\bigskip
Let $f(z)=q^{\lambda}\sum_{n=0}^{\infty}a_n q^{mn}$ be a convergent $q$-series with $\lambda\in\cc$, $a_n\in\cc$, $a_0\neq0$ and $m\in\rr_{>0}$.
Then $\lambda$ is called the \textit{leading exponent} and $1/m$ is called the \textit{width}.
Suppose $f(z)\neq 0$ for $\Im(z)>N$.
Then for $p\in\rr$, the $p$-th power of $f(z)$ is defined by $f^p(z)=q^{p\lambda}{a_0}^p(1+\binom{p}{1}\frac{a_1}{a_0}q^m+\cdots)$ for $\Im(z)>N$.
Note that $|f^p(z)|=|f(z)|^p$ and $f^{p+p'}(z)=f^p(z)f^{p'}(z)$.

For an even integer $k\ge 2$, the weight $k$ \textit{Eisenstein series} $E_k(z)=1+O(q)$ is defined as usual.
For $k\ge 4$, $E_k\in\calm(k)$ holds, but $E_2\notin \calm(2)$ and we have
\begin{equation}
\label{eq201807171108}
(E_2|_2 A)(z)=E_2(z)+\frac{12c}{2\pi i(cz+d)}
\end{equation} 
for $A=(\begin{smallmatrix}a&b\\c&d\end{smallmatrix})\in\sltwoz$.
The \textit{Dedekind eta function} $\eta(z)=q^{1/24}+O(q^{25/24})\in\cals(1/2,\chi)$ never vanishes on $\calh$, where $\chi$ is a multiplier of weight $1/2$.
The \textit{Ramanujan delta function} is  $\Delta=\frac{1}{1728}({E_4}^3-{E_6}^2)=\eta^{24}\in\cals(12)$.

In this paper, we denote $f'(z)=\frac{1}{2\pi i}\frac{df}{dz}(z)$.
The following identities are called the \textit{Ramanujan identities}: $E_2'=\frac{1}{12}(E_2^2-E_4)$, $E_4'=\frac{1}{3}(E_2E_4-E_6)$ and $E_6'=\frac{1}{2}(E_2E_6-E_4^2)$.
Recall $\eta'=\frac{1}{24}E_2 \eta$.

For $k\in\rr$, the \textit{Ramanujan-Serre differential operator} $D_k$ is defined by 
\begin{equation}
D_k f=f'-\frac{k}{12}E_2 f.
\end{equation}
This operator was originally introduced by Ramanujan \cite{ramanujan1916}.
It also appears in the theory of $p$-adic modular forms \cite{serre1973}.
We set $\srdiff{0}{k}=1$ and $\srdiff{n}{k}=D_{k+2n-2}\cdots D_{k+2}D_k$.
By Eq.\ (\ref{eq201807171108}), we have
\begin{equation}
\label{eq201712171730}
(\srdiff{n}{k}f)|_{k+2n}A=\srdiff{n}{k}(f|_k A).
\end{equation}
for $A\in\sltwoz$.
The \textit{Leibniz rule}
\begin{equation}
\label{eq201801200956}
\srdiff{n}{k+l}(fg)=\sum_{i=0}^n\binom{n}{i}(\srdiff{i}{k}f)\srdiff{n-i}{l}g
\end{equation}
will be used later.

A \textit{modular linear differential equation} (MLDE) of weight $k$ and order $n$ is an ordinary differential equation of the form
\begin{equation}
(g_0\srdiff{n}{k}+g_1\srdiff{n-1}{k}+\cdots+g_{n-1}D_k+g_n)f=0,
\end{equation}
where $g_i\in\calm(l+2i)$ for some $l\in\zz_{\ge0}$, $g_0\neq 0$ and $f$ is the unknown (holomorphic or meromorphic) function (see \cite{mason2007}).
An MLDE is called \textit{monic} if $l=0$ and $g_0=1$.
The solutions to a monic MLDE are defined on entire $\calh$ since it is simply connected.
The following result is in Theorem 4.1 in \cite{mason2007}.

\begin{prop}
\label{prop201801121402}
The solution space of a weight $k$ MLDE $(g_0\srdiff{n}{k}+g_1\srdiff{n-1}{k}+\cdots+g_{n-1}D_k+g_n)f=0$ is invariant under the modular transformations of weight $k$.
\end{prop}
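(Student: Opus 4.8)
The plan is to show that the whole differential operator $L := g_0\srdiff{n}{k}+g_1\srdiff{n-1}{k}+\cdots+g_{n-1}D_k+g_n$ \emph{intertwines} the weight-$k$ slash action on its source with the weight-$N$ slash action on its target, where $N := k+2n+l$. Concretely, I would establish the identity
\begin{equation}
\label{eq201801121402intertwine}
L(f|_k A) = (Lf)|_{N} A \qquad (A\in\sltwoz).
\end{equation}
Granting (\ref{eq201801121402intertwine}), the proposition is immediate: if $f$ is a solution, i.e.\ $Lf=0$, then $L(f|_kA)=(Lf)|_NA=0$, so $f|_kA$ is again a solution; and since $f\mapsto f|_kA$ is $\cc$-linear with inverse $f\mapsto f|_kA^{-1}$, it carries the solution space bijectively onto itself. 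Thus the solution space is stable under all weight-$k$ modular transformations.

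The heart of the matter is the term-by-term proof of (\ref{eq201801121402intertwine}), for which I would assemble two ingredients. The first is Eq.\ (\ref{eq201712171730}), which lets every Ramanujan--Serre operator pass through the slash action at the cost of the expected weight shift:
\[
\srdiff{n-i}{k}(f|_k A) = (\srdiff{n-i}{k} f)|_{k+2(n-i)} A .
\]
The second is the multiplicativity of the slash action, $(\phi\psi)|_{a+b}A=(\phi|_aA)(\psi|_bA)$, which follows at once from the defining formula $(\cdot|_kA)(z)=j(A,z)^{-k}(\cdot)(Az)$, together with the modularity of each coefficient, $g_i\in\calm(l+2i)$, i.e.\ $g_i|_{l+2i}A=g_i$. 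Combining the two gives, for each $i$,
\[
(g_i\,\srdiff{n-i}{k}f)|_{(l+2i)+(k+2(n-i))}A
= (g_i|_{l+2i}A)\bigl((\srdiff{n-i}{k}f)|_{k+2(n-i)}A\bigr)
= g_i\,(\srdiff{n-i}{k}f)|_{k+2(n-i)}A .
\]

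The step that requires attention --- and is really the only bookkeeping in the argument --- is the observation that the output weights of all $n+1$ summands coincide: indeed $(l+2i)+(k+2(n-i))=k+2n+l=N$ for every $i$, so the weight $l+2i$ of the coefficient $g_i$ and the weight $k+2(n-i)$ of $\srdiff{n-i}{k}f$ always add to the same total $N$. With this in hand, I would apply $L$ to $f|_kA$, push each slash outward using the first ingredient, rewrite each summand via the displayed identity, and finally pull the common operator $|_NA$ out of the sum:
\[
L(f|_k A)=\sum_{i=0}^{n} g_i\,(\srdiff{n-i}{k}f)|_{k+2(n-i)}A
=\Bigl(\sum_{i=0}^{n} g_i\,\srdiff{n-i}{k}f\Bigr)\Big|_{N}A=(Lf)|_{N}A,
\]
which is (\ref{eq201801121402intertwine}) and completes the proof.
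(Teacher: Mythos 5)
Your proposal is correct and follows essentially the same route as the paper's proof: both establish the intertwining identity $L(f|_kA)=(Lf)|_{k+l+2n}A$ via the compatibility (\ref{eq201712171730}) together with the modular invariance of the coefficients $g_i$, with your weight bookkeeping $(l+2i)+(k+2(n-i))=k+l+2n$ being precisely what makes the paper's one-line computation valid. You have simply written out in full the details the paper leaves implicit.
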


\begin{proof}
Let $A\in\sltwoz$.
By the modular invariance of $g_i$ and the compatibility (\ref{eq201712171730}), we have  $((g_0\srdiff{n}{k}+g_1\srdiff{n-1}{k}+\cdots+g_{n-1}D_k+g_n)f)|_{k+l+2n}A=(g_0\srdiff{n}{k}+g_1\srdiff{n-1}{k}+\cdots+g_{n-1}D_k+g_n)(f|_kA)$.
\end{proof}

Note that $g_i$ are holomorphic functions of $q$ and $f'=q\frac{df}{dq}$.
Therefore, a monic MLDE has a singularity only at $q=0$, which turns out to be a regular singularity, hence the solutions are given by the Frobenius method.
If the indicial roots are all simple, each solution is a finite sum of $q$-series of the form $q^{\lambda}\sum_{i=0}^{\infty}a_i q^i$ (see \cite{coddington_levinson1955}).

\subsection{Generalization of Mason's theorem}

\begin{lem}
\label{masons_theorem_lemma_1}
Let $v$ be a weight $0$ multiplier on $\sltwoz$. Then the following hold.
\begin{enumerate}
\item
$\cals(0,v)=\{0\}$.
\item
$\calm(0,v)=
\begin{cases}
\cc & \text{if }v=1,\\
\{0\} & \text{if }v\neq 1.
\end{cases}$
\end{enumerate}
\end{lem}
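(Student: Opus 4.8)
The plan is to analyze the $q$-expansion of a weight $0$ modular form with multiplier $v$ and exploit the fact that the only allowed behaviour at the cusp forces strong rigidity. First I would recall that a holomorphic modular form $F\in\calm(0,v)$ (here scalar-valued, since $\triv$ or a one-dimensional multiplier is in play) satisfies $F|_0 A = v(A)F$ for all $A\in\sltwoz$ and is bounded around infinity. Because the weight is $0$, the transformation under $T=(\begin{smallmatrix}1&1\\0&1\end{smallmatrix})$ reads $F(z+1)=v(T)F(z)$, where $v(T)$ is a root of unity (or at least $|v(T)|=1$). Writing $F$ in terms of $q=e(z)$, I would expand $F$ in a generalized Fourier series $F(z)=\sum_{\alpha} c_\alpha q^{\alpha}$ whose exponents $\alpha$ are pinned down modulo $1$ by the eigenvalue $v(T)$; boundedness at infinity then kills all terms with negative real part of $\alpha$, so $F$ is a genuine power series in $q^{\beta}$ for the appropriate leading exponent $\beta\in[0,1)$.

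The heart of the argument is the invariant-function reformulation. For part (2) with $v=1$, $F$ is then a genuine weight $0$ holomorphic modular function on $\sltwoz$ that is bounded at the cusp, hence extends to a holomorphic function on the compact modular curve $X(1)\cong\ponec$; by the maximum principle (or Liouville), such a function is constant, giving $\calm(0,1)=\cc$. For part (1), a cusp form in $\cals(0,1)$ is such a constant that moreover vanishes at infinity, forcing it to be $0$. The remaining case is $v\neq 1$: here I would argue that $|F|$ is a genuine $\sltwoz$-invariant function (since $|v(B)|=1$), continuous and bounded on $\calh$, hence descends to a bounded continuous function on the compact surface $X(1)$, and therefore attains a maximum at some interior point $z_0\in\calh$. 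Applying the maximum modulus principle to the holomorphic $F$ at $z_0$ shows $F$ is constant; but a nonzero constant cannot satisfy $F(z+1)=v(T)F(z)$ unless $v(T)=1$, and more generally a nonzero constant transforming by $v$ forces $v\equiv 1$, a contradiction. Hence $F=0$ and $\calm(0,v)=\{0\}$.

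The main obstacle I anticipate is the careful handling of the multiplier and the cusp at the level of the quotient: one must verify that boundedness around infinity (in the sense defined in the excerpt, uniform over $\Im(z)>M$) together with $|v|=1$ genuinely yields a bounded continuous function on the \emph{compact} orbifold $X(1)$, so that the maximum modulus argument applies. I would be careful that the maximum of $|F|$ is attained in the interior $\calh$ rather than escaping to the cusp, which is exactly where boundedness (as opposed to mere holomorphy) is used. A clean alternative, avoiding topology, is to bound the leading exponent directly: since $E_4$ and $E_6$ generate enough modular forms of positive weight, one can note that $\Delta^{1/24}=\eta$ or an appropriate power gives a nonvanishing form, and divide to reduce any hypothetical nonzero $F\in\calm(0,v)$ to a contradiction with the valence formula, but the maximum-modulus route is shortest. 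Once part (2) is established, part (1) is immediate because a cusp form is a holomorphic modular form that additionally vanishes at infinity, and the only candidate constant then vanishes.
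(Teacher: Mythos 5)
Your route is genuinely different from the paper's, and one step in it, as written, does not hold. You assert that the invariant function $|F|$, descending to the compact curve $X(1)$, ``attains a maximum at some interior point $z_0\in\calh$.'' But if $F$ is nonconstant, the maximum modulus principle \emph{forbids} an interior maximum of $|F|$ on $\calh$; since the maximum over the compact $X(1)$ exists, it must then sit at the cusp --- precisely the branch your argument never treats, and the one your own caution flags without resolving. The patch is the local-coordinate analysis you set up in your first paragraph: writing $F=q^{\beta}\sum_{n\ge0}a_nq^n$ with $\beta\in[0,1)$ pinned down by $v(T)$, either $\beta>0$, in which case $|F|$ extends continuously by $0$ at the cusp, so a maximum there forces $\sup|F|=0$ and $F\equiv0$; or $\beta=0$, i.e.\ $v(T)=1$, in which case $F$ is a holomorphic function of $q$ on the full disc with an interior maximum at $q=0$, hence constant by maximum modulus, and a nonzero constant transforming by $v$ forces $v=1$, a contradiction. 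With this two-case analysis at the cusp your argument closes; the $v=1$ part (holomorphic function on $X(1)\cong\ponec$ is constant) and the deduction of (1) from (2) are fine as stated.

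For comparison, the paper's proof is a two-line algebraic reduction that avoids the compactification entirely: it uses that a weight~$0$ multiplier is a power of the eta multiplier, writes $v=\chi^{2i}$ with $i\in\{2,4,\ldots,10\}$ when $v\neq1$, and multiplies by the nonvanishing form $\eta^{24-2i}$ to embed $\cals(0,v)$ and $\calm(0,v)$ into $\cals(12-i,1)$, which is zero because $\sltwoz$ has no nonzero cusp forms of weight below $12$. That argument is shorter and meshes with how the paper handles multipliers elsewhere (Lemma~\ref{masons_theorem_lemma_2} and Theorem~\ref{thm201712301224} manipulate the same powers of $\chi$ and $\eta$), but it consumes two inputs you do not need: the classification of weight-$0$ multipliers via the abelianization of $\sltwoz$, and the vanishing of low-weight cusp spaces. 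Your maximum-modulus route, once repaired, requires only $|v|=1$ and so is more robust in settings where the character group is not known explicitly; this trade-off is also visible in your closing remark, where your proposed ``divide by a power of $\eta$ and invoke the valence formula'' alternative is essentially the paper's actual proof.
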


\begin{proof}
Since the case $v=1$ is clear, assume $v\neq 1$.
Choose $i=2,4,\ldots,10$ so that $v=\chi^{2i}$.
(1) $\eta^{24-2i}\cals(0,v)\subset\cals(12-i,1)=0$.
(2) $\eta^{24-2i}\calm(0,v)\subset\cals(12-i,1)=0$.
\end{proof}

\begin{lem}
\label{masons_theorem_lemma_2}
Let $u:\sltwoz\to\cc$ be a function and $k\in\rr$ a real number.
Suppose that there exists $f\in\hol\backslash\{0\}$ such that $f|_k A=u(A)f$ for all $A\in\sltwoz$.
Then $u$ is a weight $k$ multiplier on $\sltwoz$.
\end{lem}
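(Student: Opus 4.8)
The plan is to read off the two defining properties of a weight $k$ multiplier directly from the functional equation $f|_k A = u(A) f$: first, that $u$ never vanishes, and second, that $u$ satisfies the multiplier consistency (cocycle) relation $u(A)u(B) = \sigma_k(A,B)\,u(AB)$, where $\sigma_k(A,B) = j(AB,z)^{k}\,j(A,Bz)^{-k}\,j(B,z)^{-k}$ is the automorphy factor set attached to weight $k$. The essential mechanism is that the slash operator composes not strictly but up to this factor, and the hypothesis lets me transport that compatibility to $u$.

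Nonvanishing is immediate. If $u(A)=0$ for some $A$, then $f|_k A = u(A)f$ vanishes identically; but $(f|_k A)(z) = j(A,z)^{-k}f(Az)$ with $j(A,z)\neq 0$ for $z\in\calh$, and $z\mapsto Az$ is a bijection of $\calh$, so $f(Az)=0$ for all $z$ would force $f\equiv 0$, contradicting $f\in\hol\setminus\{0\}$. Hence $u(A)\in\cc^{\times}$ for every $A$.

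For the cocycle relation I would compute $(f|_k A)|_k B$ in two ways. Unwinding the definition gives $((f|_k A)|_k B)(z) = j(B,z)^{-k}j(A,Bz)^{-k}f((AB)z)$, while $(f|_k(AB))(z) = j(AB,z)^{-k}f((AB)z)$; comparing the two yields $(f|_k A)|_k B = \sigma_k(A,B)\,f|_k(AB)$. On the other hand, applying the hypothesis twice gives $(f|_k A)|_k B = u(A)u(B)\,f$ and $f|_k(AB) = u(AB)\,f$. Equating and cancelling $f$ (legitimate because $f\not\equiv 0$ is nonzero at some point, so the scalar coefficients must agree) produces $u(A)u(B) = \sigma_k(A,B)\,u(AB)$, which is exactly the consistency condition; taking $A=B$ the identity also gives $u(I)=1$.

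The only point requiring care, and the sole place where the non-integrality of $k$ enters, is that this relation is meaningful only if $\sigma_k(A,B)$ is a genuine constant, independent of $z$; this is the crux. Starting from the exact cocycle identity $j(AB,z) = j(A,Bz)\,j(B,z)$ for $j$ itself, the modulus of $\sigma_k(A,B)$ is $1$, while its argument is $k\big(\arg j(AB,z) - \arg j(A,Bz) - \arg j(B,z)\big)$, which is $2\pi k$ times an integer-valued function of $z$. Since each $j(\gamma,z)$ is nonvanishing for $z\in\calh$ (and $Bz\in\calh$), every argument varies continuously as $z$ ranges over the connected set $\calh$, so that integer is constant and $\sigma_k(A,B)$ is independent of $z$, as needed. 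For $k\in\zz$ this factor is simply $1$ and the subtlety evaporates. With $\sigma_k$ established as a $z$-independent factor set and $u$ shown to be nonvanishing and to obey the consistency relation, $u$ is a weight $k$ multiplier on $\sltwoz$.
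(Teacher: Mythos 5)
Your cocycle computation is correct as far as it goes, and it is a genuinely different route from the paper's: you verify the weight-$k$ consistency relation for $u$ directly from the composition of slash operators, whereas the paper sets $g=f/\eta^{2k}$ and observes $g|_0A=(u/\chi^{2k})(A)\,g$, so that $u/\chi^{2k}$ is a one-dimensional representation of $\sltwoz$, forcing $u=\chi^{2k}\chi^{2n}$ for some $n\in\{0,\ldots,11\}$. But your proof has a genuine gap: you never establish $|u(A)|=1$, which is part of the definition of a multiplier in the sources the paper follows (Knopp, Rankin) and which the paper's proof explicitly derives. Nonvanishing plus the consistency relation do not formally imply unimodularity: the nonvanishing solutions of the weight-$k$ consistency relation form a torsor under the characters $\sltwoz\to\cc^{\times}$, and the reason every such solution has modulus one is a group-theoretic fact nowhere used in your argument --- the abelianization of $\sltwoz$ is $\zz/12\zz$, so every character has finite order and hence unit modulus. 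The eta-quotient trick buys the paper exactly this: identifying $u$ as $\chi^{2(k+n)}$ yields $|u|=1$ and the consistency condition in one stroke. Your argument can be patched along the same lines (compare $u$ with the known weight-$k$ multiplier $\chi^{2k}$ of $\eta^{2k}$, note the ratio is a character, invoke finiteness of the character group), but as written the conclusion ``$u$ is a weight $k$ multiplier'' is not fully proved.

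A secondary, fixable looseness: in your argument that $\sigma_k(A,B)$ is $z$-independent, nonvanishing of $j(\gamma,z)$ alone does not make the principal argument $\arg\in(-\pi,\pi]$ vary continuously --- a nonvanishing function can cross the cut $(-\infty,0)$, where $\arg$ jumps by $2\pi$ and hence $\sigma_k$ would jump by $e^{\pm2\pi ik}\neq1$ for non-integral $k$. What saves you is the specific shape of $j$: for $A=\matrixabcd$ with $c\neq0$ one has $\Im j(A,z)=c\,\Im z\neq0$, so $j(A,\cdot)$ stays in a fixed open half-plane where $\arg$ is continuous, while $c=0$ forces $j\equiv d=\pm1$, a constant; the same applies to $j(A,Bz)$ because $Bz\in\calh$. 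With that observation in place, your integer-valued discrepancy is continuous on the connected set $\calh$, hence constant, and the rest of your cocycle argument goes through.
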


\begin{proof}
Since $f\neq 0$, $u(\sltwoz)\subset\cc^{\times}$.
Set $g=f/\eta^{2k}$.
Since $g|_0 A=(f|_k A) /(\eta^{2k}|_k A)\allowbreak=u(A)g/\chi^{2k}(A)$, $u/\chi^{2k}$ is a one-dimensional representation of $\sltwoz$, so $u=\chi^{2k}\chi^{2n}$ ($n=0,1,\ldots,11$) and $|u(A)|=1$ for all $A\in\sltwoz$.
The other conditions for a multiplier are clear.
\end{proof}

The following theorem generalizes Theorem 4.3 in \cite{mason2007}.

\begin{thm}
\label{thm201712301224}
Let $v$ be a weight $k\in\rr$ multiplier on $\sltwoz$, $R$ an $n$-dimensional representation and $F=(f_1,\ldots,f_n)^T\in\calm^!(k,\sltwoz,v,R)$.
Suppose that each $f_i$ has the $q$-expansion $q^{\lambda_i}\sum_{n=0}^{\infty}a_{in}q^{mn}$ with $m>0$ and $a_{i0}\neq 0$, and that $\lambda_1,\ldots,\lambda_n\in\rr$ are distinct. 
\begin{enumerate}
\item
$n(k+n-1)\ge 12(\lambda_1+\cdots+\lambda_n)$.
\item
$f_1,\ldots,f_n$ are solutions to an MLDE of order $n$ and weight $k$.
\item
The following conditions are equivalent:

\noindent
(a) $n(k+n-1)= 12(\lambda_1+\cdots+\lambda_n)$,

\noindent
(b) $f_1,\ldots,f_n$ are solutions to a monic MLDE of order $n$ and weight $k$.
\end{enumerate}
\end{thm}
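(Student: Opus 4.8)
The plan is to construct the order-$n$ operator annihilating $f_1,\dots,f_n$ from the modular Wronskian and to read off all three assertions from its weight and leading exponent. Throughout set $\kappa=n(k+n-1)$ and $\Lambda=\lambda_1+\cdots+\lambda_n$, and let $M=\bigl(\srdiff{i-1}{k}f_j\bigr)_{i,j=1}^{n}$ with Wronskian $W=\det M$. More generally, expanding
\[
Lf=\det\begin{pmatrix}
f & f_1 & \cdots & f_n\\
\srdiff{1}{k}f & \srdiff{1}{k}f_1 & \cdots & \srdiff{1}{k}f_n\\
\vdots & & & \vdots\\
\srdiff{n}{k}f & \srdiff{n}{k}f_1 & \cdots & \srdiff{n}{k}f_n
\end{pmatrix}
\]
along its first column gives $Lf=\sum_{i=0}^{n}(-1)^{i}W_i\,\srdiff{i}{k}f$, where $W_i$ is the minor omitting the row $\srdiff{i}{k}$ and $W_n=W$. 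A repeated column forces $Lf_j=0$ for every $j$. From (\ref{eq201712171730}) and $F|_kA=v(A)R(A)F$ one gets $\srdiff{i-1}{k}(f_j|_kA)=v(A)\sum_l R(A)_{jl}\srdiff{i-1}{k}f_l$, so each minor transforms by $W_i|_{\wt(W_i)}A=v(A)^{n}\det R(A)\,W_i$; in particular $W$ is weakly holomorphic of weight $\kappa$ and multiplier $\mu:=v^{n}\det R$. The key computation is the leading exponent: factoring $q^{\lambda_j}$ out of the $j$-th column shows the coefficient of $q^{\Lambda}$ in $W$ is $\bigl(\prod_j a_{j0}\bigr)\det\bigl(p_{i-1}(\lambda_j)\bigr)_{i,j}$, where $p_{i-1}=\prod_{l=0}^{i-2}\!\bigl(x-\tfrac{k+2l}{12}\bigr)$ is the monic degree-$(i-1)$ polynomial giving the leading coefficient of $\srdiff{i-1}{k}$. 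This determinant equals the Vandermonde $\prod_{i<j}(\lambda_j-\lambda_i)\neq0$ since the $\lambda_i$ are distinct, so $W\neq0$ with leading exponent exactly $\Lambda$, while every $W_i$ has leading exponent $\ge\Lambda$.

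For (1) I would set $g=W/\eta^{2\kappa}$. As $\eta$ is zero-free on $\calh$, $g$ is holomorphic there of weight $0$ and leading exponent $\Lambda-\kappa/12$, and by Lemma \ref{masons_theorem_lemma_2} its automorphy factor $\mu/\chi^{2\kappa}$ is a weight-$0$ multiplier. If $\Lambda-\kappa/12>0$ then $g$ vanishes at the cusp, hence $g\in\cals(0,\mu/\chi^{2\kappa})=\{0\}$ by Lemma \ref{masons_theorem_lemma_1}(1), contradicting $W\neq0$. Therefore $\Lambda-\kappa/12\le0$, which is precisely $12(\lambda_1+\cdots+\lambda_n)\le n(k+n-1)$.

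For (2) it remains to make the coefficients of $L$ holomorphic modular forms of trivial multiplier. Writing $\mu/\chi^{2\kappa}=\chi^{2m}$ with $m\in\{0,\dots,11\}$, choose a real $w>0$ with $w+\kappa+m\in12\zz$ and $w\ge-12\Lambda$. Then $\eta^{2w}W_i$ is holomorphic on $\calh$, has multiplier $\chi^{2w}\mu=1$, and leading exponent $\ge w/12+\Lambda\ge0$, so $\eta^{2w}W_i\in\calm(w+\wt(W_i))$; as $\eta^{2w}W\neq0$, the value $l:=w+\kappa$ lies in $\zz_{\ge0}$. Hence $\eta^{2w}L=\sum_{i=0}^{n}(-1)^i(\eta^{2w}W_i)\,\srdiff{i}{k}$ is an MLDE of order $n$ and weight $k$ annihilating every $f_j$.

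For (3), assume first (b): the $f_j$ solve $\srdiff{n}{k}f+h_1\srdiff{n-1}{k}f+\cdots+h_nf=0$ with $h_i\in\calm(2i)$. The crux is an Abel-type identity. Differentiating $\det M$ via $(\srdiff{i-1}{k}f_j)'=\srdiff{i}{k}f_j+\tfrac{k+2(i-1)}{12}E_2\,\srdiff{i-1}{k}f_j$, the $E_2$-terms sum to $\tfrac{\kappa}{12}E_2W$ and cancel the $-\tfrac{\kappa}{12}E_2W$ in $D_\kappa W=W'-\tfrac{\kappa}{12}E_2W$, while all but one of the remaining determinants have a repeated row; this leaves $D_\kappa W=\widehat W$, where $\widehat W$ is $\det M$ with its last row replaced by $(\srdiff{n}{k}f_j)_j$. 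Substituting $\srdiff{n}{k}f_j=-\sum_{m\ge1}h_m\srdiff{n-m}{k}f_j$ and discarding the repeated-row terms yields $D_\kappa W=-h_1W$. Because $\calm(2)=\{0\}$ we get $h_1=0$, so $D_\kappa W=0$; since $D_\kappa\eta^{2\kappa}=0$ as well and $D_\kappa y=0$ has a one-dimensional solution space, $W$ is a constant multiple of $\eta^{2\kappa}$, whence $\Lambda=\kappa/12$, i.e.\ (a). Conversely, under (a) the function $g=W/\eta^{2\kappa}$ is a nonzero element of $\calm(0,\mu/\chi^{2\kappa})$, so Lemma \ref{masons_theorem_lemma_1}(2) forces its multiplier to be trivial and $g$ to be a nonzero constant; thus $W=c\,\eta^{2\kappa}$ is zero-free on $\calh$. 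Then $(-1)^nW^{-1}L=\srdiff{n}{k}+\sum_{j=1}^{n}(-1)^{j}\tfrac{W_{n-j}}{W}\srdiff{n-j}{k}$ has coefficients holomorphic on $\calh$, of weight $2j$, of trivial multiplier, and of leading exponent $\ge0$, hence lying in $\calm(2j)$; this is a monic MLDE of order $n$ and weight $k$ killing all $f_j$, giving (b). The main obstacle is the Wronskian differentiation identity $D_\kappa W=-h_1W$, which drives the equivalence in (3); everything else reduces to the two lemmas on weight-$0$ forms and to leading-exponent bookkeeping.
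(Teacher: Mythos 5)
Your proposal is correct and follows essentially the same route as the paper: the modular Wronskian and its minors, Lemmas \ref{masons_theorem_lemma_1} and \ref{masons_theorem_lemma_2} applied to $W/\eta^{2\kappa}$ for parts (1) and (3)(a)$\Rightarrow$(b), and multiplication by a suitable power of $\eta$ to trivialize the multiplier and clear the cusp for part (2). The only difference is that for (3)(b)$\Rightarrow$(a) you unpack the Abel-type identity $D_\kappa W=-h_1W$ explicitly, whereas the paper simply defers this direction to Mason's Theorem 4.3 --- your computation is precisely that argument, so nothing is gained or lost.
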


\begin{proof}
(1) Set $\lambda=\lambda_1+\cdots+\lambda_n$ and $l=n(k+n-1)$.
The \textit{modular Wronskian} $W(F)=\det(F,D_k F, \srdiff{2}{k} F,\ldots,\srdiff{n-1}{k}F)$ is a nonzero $q$-series with the leading exponent $\lambda$ and the width $1/m$ since $\lambda_1,\ldots,\lambda_n$ are distinct (cf.\ Proof of Lemma 3.6 in \cite{mason2007}).
Moreover, $W(F)|_l A=v(A)^n \det(R(A))W(F)$ for all $A\in\sltwoz$ (cf.\ Lemma 3.1 and 3.4 in \cite{mason2007}).
By Lemma \ref{masons_theorem_lemma_2}, $v^n \det(R)$ is a weight $l$ multiplier on $\sltwoz$.
Therefore, $u:=\frac{v^n \det(R)}{\chi^{2l}}$ is a weight $0$ multiplier on $\sltwoz$ and so $\frac{W(F)}{\eta^{2l}}\in\calm^!(0,u)\backslash\{0\}$ has the leading exponent $\lambda-\frac{l}{12}$.
If $\lambda-\frac{l}{12}>0$, then $\frac{W(F)}{\eta^{2l}}\in\cals(0,u)\backslash\{0\}$, which contradicts Lemma \ref{masons_theorem_lemma_1}.
Therefore, $\lambda-\frac{l}{12}\le 0$ and $n(k+n-1)\ge 12(\lambda_1+\cdots+\lambda_n)$.

(3) The proof of (b)$\Rightarrow$(a) is similar to that of Theorem 4.3 in \cite{mason2007}.
We prove (a)$\Rightarrow$(b).
Since $\lambda-\frac{l}{12}=0$, we have $\frac{W(F)}{\eta^{2l}}\in\calm(0,u)\backslash\{0\}$.
By Lemma \ref{masons_theorem_lemma_1}, $u=1$ and $v^n \det(R)=\chi^{2l}$.
For each $i=1,\ldots,n$, we have
\begin{equation}
\label{masons_theorem_mlde}
\det
\begin{pmatrix}
f_i & D_k f_i & \cdots & \srdiff{n}{k} f_i\\
F & D_k F & \cdots & \srdiff{n}{k} F
\end{pmatrix}
=\sum_{j=0}^n (-1)^j W^j (F) \srdiff{j}{k} f_i=0,
\end{equation}
where $W^j (F)=\det(F,\ldots,\srdiff{j-1}{k}F,\srdiff{j+1}{k}F,\ldots,\srdiff{n}{k} F)$.
The leading exponent of $W^j (F)$ is equal to or greater than $\lambda$, and $W^j (F)|_{l+2n-2j}A=v(A)^n \det(R(A))W^j (F)$ for all $A\in\sltwoz$.
Therefore, $W^j(F)\in\calm^!(l+2n-2j,v^n \det(R))$ and $\frac{W^j(F)}{\eta^{2l}}\in\calm(2n-2j)$.
In particular, $\frac{W^n (F)}{\eta^{2l}}=\frac{W(F)}{\eta^{2l}}\in\calm(0)\backslash\{0\}=\cc^{\times}$.
By dividing the both sides of Eq.\ (\ref{masons_theorem_mlde}) by $\eta^{2l}$, we obtain
\begin{equation}
(-1)^n \frac{W(F)}{\eta^{2l}}\srdiff{n}{k} f_i +\sum_{j=0}^{n-1} (-1)^{j} \frac{W^j (F)}{\eta^{2l}} \srdiff{j}{k} f_i=0.
\end{equation}

(2) We have $\frac{W(F)}{\eta^{2l}}\in\calm^!(0,u)\backslash\{0\}$ with the leading exponent $\lambda-\frac{l}{12}$.
Set $u=\chi^{2i}$ ($i=0,2,\ldots,10$) and choose an $N\in\zz$ such that $N\ge \frac{i}{12}-\lambda+\frac{l}{12}$.
Then $\eta^{24N-2i}\frac{W(F)}{\eta^{2l}}\in\calm(12N-i)\backslash\{0\}$.
Similarly, we have $\eta^{24N-2i}\frac{W^j (F)}{\eta^{2l}}\in\calm(12N-i+2n-2j)$.
Multiplying the both sides of Eq.\ (\ref{masons_theorem_mlde}) by $\frac{\eta^{24N-2i}}{\eta^{2l}}$, we obtain
\begin{equation}
(-1)^n \eta^{24N-2i}\frac{W(F)}{\eta^{2l}}\srdiff{n}{k}f_i +\sum_{j=0}^{n-1} (-1)^{j} \eta^{24N-2i}\frac{W^j (F)}{\eta^{2l}}\srdiff{j}{k}f_i=0.
\end{equation}
\end{proof}

\section{Modular linear differential operators}
\label{section201712301238}

In this section, we define modular linear differential operators (Subsection \ref{subsection201801141056}) and show their algebraic properties (Subsections \ref{section201801110942} and \ref{subsection201801181005}).
The division properties shown in Subsection \ref{subsection201801181005} will be utilized for the proofs in the subsequent sections.

\subsection{Definition of MLDO}
\label{subsection201801141056}
In this paper, an algebra or a ring is associative and unital, but not necessarily commutative.
For any graded abelian group $A$, we denote by $A_k$ the degree $k$ homogeneous subgroup of $A$ and by $A_*$ the set of all homogeneous elements of $A$.

We set $\calm=\bigoplus_{n\in 2\zz}\calm_n$ and $\cals=\bigoplus_{n\in 2\zz}\cals_n$.
Note that $\calm=\cc [E_4,E_6]$ is a graded polynomial ring and $\cals=\Delta\calm$ is a prime ideal of $\calm$.

We set $H_n=\hol$ and $H=\bigoplus_{n\in\rr}H_n$.
For $f\in H_n$, we define the \textit{weight} of $f$ by $\wt(f)=n$.
If necessary, we denote an element of $H_n$ as $(f,n)$, where $f\in\hol$.
The space $H$ becomes a graded $\cc$-algebra with the identity $(1,0)$ by $(f,n)(g,m)=(fg,m+n)$.

We denote by $\End^i(H)$ the $\cc$-vector space $\{ \phi \in \End_{\cc}(H) \mid \phi(H_n)\subset H_{n+i}$ for all $n\in\rr\}$ and set $\End(H)=\bigoplus_{i\in\rr}\End^i(H)$, which is a graded $\cc$-algebra.
For $a\in \End^i (H)$, we define the \textit{weight} of $a$ by $\wt(a)=i$. 

Consider the following three homogeneous elements in $\End (H)$:
\begin{align}
\dd&\in\End^2 (H),\quad\dd(f,n)=(D_n f,n+2),\\
e_4&\in\End^4 (H),\quad e_4(f,n)=(E_4 f,n+4),\\
e_6&\in\End^6 (H),\quad e_6(f,n)=(E_6 f,n+6).
\end{align}
It is easy to check the commutation relations:
\begin{equation}
\label{eq201805150955}
[\dd,e_4]=-\frac{1}{3}e_6,\quad [\dd,e_6]=-\frac{1}{2}{e_4}^2,\quad [e_4,e_6]=0,
\end{equation}
where $[x,y]=xy-yx$ is the commutator.

By $\calr=\bigoplus_{i\in\zz}\calr_i$, we denote the graded $\cc$-subalgebra of $\End (H)$ generated by $\dd$, $e_4$ and $e_6$ and call it the \textit{algebra of modular linear differential operators} or the \textit{MLDO algebra}.
An element of $\calr$ is called a \textit{modular linear differential operator} (MLDO).
We remark that $H$ is a left $\calr$-module and $\calm\subset H$ is a graded $\cc$-subalgebra and a graded left $\calr$-submodule.

By $\calm'=\bigoplus_{i\in\zz}\calm'_i$, we denote the graded $\cc$-subalgebra of $\End (H)$ generated by $e_4$ and $e_6$.
We define the $\cc$-algebra homomorphism $\iota:\calm\to\calm'$ by $\iota(E_4)=e_4$ and $\iota(E_6)=e_6$.
We identify $\calm$ with $\calm'$ since $\iota$ is a grade-preserving isomorphism.
Note that an MLDE can be expressed by an MLDO as follows:
\begin{equation}
(g_n \srdiff{n}{k}+\cdots+g_1 D_k +g_0)f=0 \Leftrightarrow (g_n {\dd}^n+\cdots+g_1 \dd +g_0)(f,k)=0.
\end{equation}

For $f=\sum_{k}(f_k,k) \in H$ and $A\in\sltwoz$, we denote $\sum_k (f|_k A,k)$ by $fA$.
\begin{prop}
Let $f,g\in H$, $a\in\calr$ and $A,B\in\sltwoz$.
\begin{enumerate}
\item
\label{item201801120946}
$\dd(fg)=(\dd f)g+f(\dd g)$. (That is, $\dd$ is a derivation of $H$.)
\item
$e_4(fg)=(e_4 f)g=f(e_4 g)$ and $e_6(fg)=(e_6 f)g=f(e_6 g)$.
\item
If $f\in\bigoplus_{n\in\zz}H_n$, then $f(AB)=(fA)B$.
\item
\label{item201801120946_2}
$(fg)A=(fA)(gA)$.
\item
\label{item201801120946_3}
$(af)A=a(fA)$.
\end{enumerate}
\end{prop}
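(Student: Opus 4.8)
The plan is to reduce every assertion to a direct computation on homogeneous elements. Each of the five identities is additive in its function arguments (parts (1), (2), (4), (5) are even $\cc$-bilinear), and $a$, $\dd$, $e_4$, $e_6$ together with the assignment $f\mapsto fA$ are all $\cc$-linear; so it suffices to verify each statement for $f=(f_0,n)$ and $g=(g_0,m)$ homogeneous, where everything unwinds into explicit manipulations of the slash operator, of the operator $D_n$, and of multiplication by $E_4$, $E_6$. None of the five parts is deep; the point of the proof is to record the compatibilities cleanly, and the only genuinely non-formal input will be the equivariance of $\dd$.

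For (1), writing $fg=(f_0g_0,n+m)$ gives $\dd(fg)=(D_{n+m}(f_0g_0),n+m+2)$, and the ordinary product rule yields $D_{n+m}(f_0g_0)=f_0'g_0+f_0g_0'-\tfrac{n+m}{12}E_2f_0g_0$; comparing with $(\dd f)g+f(\dd g)=((D_nf_0)g_0+f_0(D_mg_0),n+m+2)$ and using $\tfrac{n}{12}+\tfrac{m}{12}=\tfrac{n+m}{12}$ gives equality. (Equivalently this is the $n=1$ case of the Leibniz rule \eqref{eq201801200956}.) Part (2) is immediate, since $e_4$ and $e_6$ act as multiplication by $(E_4,4)$ and $(E_6,6)$ and the product of $H$ is commutative in its function factor.

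For (3), the cocycle relation $j(AB,z)=j(A,Bz)j(B,z)$ together with $A(Bz)=(AB)z$ gives, for integer weight $k$, $(f_0|_k(AB))(z)=j(AB,z)^{-k}f_0((AB)z)=j(A,Bz)^{-k}j(B,z)^{-k}f_0(A(Bz))=((f_0|_kA)|_kB)(z)$. Here the hypothesis $f\in\bigoplus_{n\in\zz}H_n$ is exactly what lets one split $j(AB,z)^{-k}=j(A,Bz)^{-k}j(B,z)^{-k}$: for integer $k$ any branch discrepancy $e^{2\pi ikm}$ ($m\in\zz$) coming from the $\arg$-convention equals $1$, whereas for non-integral $k$ it need not. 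For (4), the single base $w=j(A,z)$ satisfies $w^{-n}w^{-m}=w^{-(n+m)}$ for all real $n,m$ under the fixed-branch convention, so $(f_0g_0)|_{n+m}A=(f_0|_nA)(g_0|_mA)$, i.e.\ $(fg)A=(fA)(gA)$.

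Finally, for (5) I would argue structurally that $S_A:=\{a\in\calr\mid(af)A=a(fA)\text{ for all }f\in H\}$ is a $\cc$-subalgebra of $\calr$: it contains the identity, is visibly closed under $\cc$-linear combinations, and is closed under composition since $((ab)f)A=(a(bf))A=a((bf)A)=a(b(fA))=(ab)(fA)$ for $a,b\in S_A$. As $\calr$ is generated by $\dd,e_4,e_6$, it then suffices to put these three generators in $S_A$. For $e_4,e_6$ this follows from (4) and the modularity of $E_4,E_6$: writing $e_4f=(E_4,4)f$ gives $(e_4f)A=((E_4,4)A)(fA)=(E_4,4)(fA)=e_4(fA)$, because $E_4|_4A=E_4$, and likewise for $e_6$. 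The one substantive input is the $\dd$-case, where $(D_nf_0)|_{n+2}A=D_n(f_0|_nA)$ is precisely the $n=1$ instance of the compatibility \eqref{eq201712171730}; this is the crux, and it is exactly the property responsible for $\calr$ being equivariant for the $\sltwoz$-action. Everything else is bookkeeping, the only delicate point being the integrality restriction in (3) forced by the choice of logarithm branch.
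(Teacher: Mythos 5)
Your proposal is correct and matches the paper's intent exactly: the paper's proof is simply ``The results follow from the definitions,'' and your computations — reduction to homogeneous elements, the product rule for $D_{n+m}$ in (1), the cocycle identity with the integer-weight branch caveat in (3), the fixed-branch identity $w^{-n}w^{-m}=w^{-(n+m)}$ in (4), and the reduction of (5) to the generators $\dd,e_4,e_6$ via Eq.~(\ref{eq201712171730}) — are precisely the routine verifications being left to the reader.
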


\begin{proof}
The results follow from the definitions.
\end{proof}

\subsection{Basis of the MLDO algebra}
\label{section201801110942}
In this subsection, we prove that ${e_4}^i{e_6}^j{\dd}^k$ form a $\cc$-basis of $\calr$ (Theorem \ref{thm201712301243}) and show that $\calr$ is isomorphic to a graded skew polynomial ring (Theorem \ref{thm201805311336}).

For $a\in\calr$, we set $D[a]=[\dd,a]$, $D^0[a]=a$ and $D^n[a]=D[D^{n-1}[a]]$ for $n>0$.
The Leibniz rule $D[ab]=D[a]b+aD[b]$ is clear.
By induction on $n\ge0$, 
$
D^n[ab]=\sum_{i=0}^n \binom{n}{i}D^i[a]D^{n-i}[b].
$

\begin{lem}
\label{lem201805151227}
\begin{enumerate}
\item
For $a\in\calm'$, $D[a]\in\calm'$.
\item
For $a\in\calm(k)$, $D[\iota(a)]=\iota(D_k a)$.
\item
For $a\in\calr$ and $n\in\zz_{\ge 0}$, ${\dd}^n a=\sum_{i=0}^n\binom{n}{i}D^i [a]{\dd}^{n-i}$ and $a{\dd}^n=\sum_{i=0}^n\binom{n}{i}\allowbreak (-1)^i \allowbreak {\dd}^{n-i} \allowbreak D^i[a]$.
\end{enumerate}
\end{lem}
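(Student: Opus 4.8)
The plan is to prove the three statements of Lemma \ref{lem201805151227} in the order given, since each part feeds into the next. The underlying principle throughout is that $D[-]=[\dd,-]$ is a $\cc$-linear derivation of $\calr$ (this is already recorded just before the statement), so it suffices to check each claim on the algebra generators.

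For part (1), I would argue that $\calm'$ is generated as a $\cc$-algebra by $e_4$ and $e_6$, and that a derivation maps a subalgebra into itself as soon as it maps the generators into that subalgebra. Thus I only need $D[e_4]\in\calm'$ and $D[e_6]\in\calm'$, and these are exactly the commutators recorded in Eq.\ (\ref{eq201805150955}): $D[e_4]=[\dd,e_4]=-\frac{1}{3}e_6\in\calm'$ and $D[e_6]=[\dd,e_6]=-\frac{1}{2}{e_4}^2\in\calm'$. Closure under products then follows from the Leibniz rule $D[ab]=D[a]b+aD[b]$.

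For part (2), both $a\mapsto D[\iota(a)]$ and $a\mapsto\iota(D_k a)$ (where $k=\wt a$) are $\cc$-linear derivations from $\calm$ into $\calm'$, once one uses the Leibniz rule (\ref{eq201801200956}) for $D_k$ to see that $a\mapsto\iota(D_{\wt a}a)$ is a derivation; hence it again suffices to check the identity on the generators $E_4$ and $E_6$. On the left side, $D[\iota(E_4)]=D[e_4]=-\frac{1}{3}e_6$ and $D[\iota(E_6)]=D[e_6]=-\frac{1}{2}{e_4}^2$ by Eq.\ (\ref{eq201805150955}); on the right side, the Ramanujan-Serre derivatives give $D_4 E_4=E_4'-\frac{4}{12}E_2E_4=-\frac{1}{3}E_6$ and $D_6 E_6=E_6'-\frac{6}{12}E_2E_6=-\frac{1}{2}{E_4}^2$ via the Ramanujan identities, so $\iota(D_4 E_4)=-\frac{1}{3}e_6$ and $\iota(D_6 E_6)=-\frac{1}{2}{e_4}^2$ match. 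The two derivations therefore agree on generators, hence everywhere.

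For part (3), I would prove both identities simultaneously by induction on $n$, treating them as formal consequences of the single relation $\dd a=a\dd+D[a]$, valid for every $a\in\calr$ by definition of $D$. For the first identity, assume ${\dd}^n a=\sum_{i=0}^n\binom{n}{i}D^i[a]{\dd}^{n-i}$, multiply on the left by $\dd$, push $\dd$ past each $D^i[a]$ using the base relation to replace $\dd D^i[a]$ by $D^i[a]\dd+D^{i+1}[a]$, and reindex; Pascal's rule $\binom{n}{i}+\binom{n}{i-1}=\binom{n+1}{i}$ then yields the $n+1$ case. The second identity is handled the same way, multiplying on the right by $\dd$ and commuting using $D^i[a]\dd=\dd D^i[a]-D^{i+1}[a]$, with the signs $(-1)^i$ arising naturally from this order. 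The main obstacle, such as it is, lies in bookkeeping the index shifts and binomial identities cleanly in part (3); the content of parts (1) and (2) is essentially a check on generators plus the derivation property, so I expect no conceptual difficulty there.
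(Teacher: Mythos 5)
Your proposal is correct, and it coincides with what the paper intends: the paper's proof is simply ``The proof is straightforward,'' and your argument---checking (1) and (2) on the generators $e_4$, $e_6$ via the commutation relations (\ref{eq201805150955}), the Ramanujan identities and the derivation property of $D[-]$, then proving (3) by induction on $n$ from the single relation $\dd a=a\dd+D[a]$ with Pascal's rule handling the reindexing---is exactly the routine verification being elided. All your generator computations ($D_4E_4=-\frac{1}{3}E_6$, $D_6E_6=-\frac{1}{2}{E_4}^2$, matching $[\dd,e_4]$ and $[\dd,e_6]$ under $\iota$) and both inductive steps check out.
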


\begin{proof}
The proof is straightforward.
\end{proof}

For $z_0\in\calh$, consider the following operator
\begin{equation}
I_{z_0}\in\End^{-2}(H),\quad(f,n)\mapsto\left(2\pi i \eta^{2n-4}(z)\int_{z_0}^z\eta^{-2n+4}(z)f(z)\,dz,n-2\right).
\end{equation}
Since $\dd I_{z_0}=1$, $\dd$ is surjective and $I_{z_0}$ is injective.
The 3-tuple $(H,\dd,I_{z_0})$ is an \textit{integro-differential algebra} 
since $R=I_{z_0}\dd$ satisfies the \textit{Rota-Baxter relation}
\begin{equation}
R(f,n)R(g,m)=(R(f,n))(g,m)+(f,n)R(g,m)-R((f,n)(g,m))
\end{equation}
(see \cite{rogensburger_rosenkranz_middeke2009}).
Note $R(f,n)=(f(z)-\eta^{2n}(z)\eta^{-2n}(z_0)f(z_0),n)$.
We also have the decomposition $H=\ker(\dd)\oplus\mathrm{im}(I_{z_0}).$

\begin{thm}
\label{thm201712301243}
The set $\{{e_4}^i {e_6}^j {\dd}^k\mid i,j,k\in\zz_{\ge 0}\}$ forms a $\cc$-basis of $\calr$.
\end{thm}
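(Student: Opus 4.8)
The plan is to prove that $\{{e_4}^i {e_6}^j {\dd}^k\}$ is both a spanning set and a linearly independent set over $\cc$.

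For spanning, I would argue that $\calr$ is generated as a $\cc$-algebra by $\dd$, $e_4$, $e_6$, so every MLDO is a $\cc$-linear combination of words in these three generators. The goal is to normalize each such word into the form ${e_4}^i {e_6}^j {\dd}^k$. The key tools are the commutation relations in Eq.\ (\ref{eq201805150955}), together with Lemma \ref{lem201805151227}. First I would use $[e_4,e_6]=0$ to freely reorder the $e_4$'s and $e_6$'s among themselves. Then, to move all occurrences of $\dd$ to the right past the $e_4$'s and $e_6$'s, I would apply the relation ${\dd}^n a=\sum_{i=0}^n\binom{n}{i}D^i[a]{\dd}^{n-i}$ from Lemma \ref{lem201805151227}(3), taking $a\in\calm'$. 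By Lemma \ref{lem201805151227}(1), each $D^i[a]$ stays in $\calm'$, hence is again a polynomial in $e_4,e_6$. A straightening (induction on word length, or on the number of inversions where a $\dd$ precedes an $e_4$ or $e_6$) then rewrites every monomial as a combination of terms with all the $e$'s on the left and a power of $\dd$ on the right, which after commuting the $e$'s gives the desired form. This establishes that the proposed set spans $\calr$.

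For linear independence, the cleanest approach is to pass from operators to their action on the module $H$, where the grading and the analytic behavior of $\dd$ and multiplication by $E_4,E_6$ can be exploited. Suppose a finite relation $\sum_{i,j,k} c_{ijk}\,{e_4}^i {e_6}^j {\dd}^k=0$ holds in $\End(H)$; I would test it against a suitably chosen element $(f,n)\in H$ and extract the coefficients $c_{ijk}$ one at a time. The natural candidates are powers $q^\lambda$ (or $(\eta^{2n}, n)$-type functions), for which the Ramanujan--Serre operator $\dd$ acts by a predictable leading behavior: since $D_n$ applied to a $q$-series with leading exponent $\lambda$ produces a $q$-series with leading exponent shifted by the eigenvalue of $q\,d/dq$ minus the $E_2$-correction, distinct powers of $\dd$ give distinct leading exponents in $q$, letting me separate the $k$-index. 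Once $k$ is fixed, the $e_4,e_6$ factors act by honest multiplication by $E_4,E_6$, and independence of the monomials ${E_4}^i{E_6}^j$ in the polynomial ring $\calm=\cc[E_4,E_6]$ forces the remaining coefficients to vanish.

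I expect the main obstacle to be the independence half, specifically making the separation-by-leading-exponent argument fully rigorous rather than just on leading terms. The difficulty is that applying $\sum c_{ijk}{e_4}^i{e_6}^j{\dd}^k$ to a single test function mixes all the indices, and I must guarantee that the chosen test functions genuinely detect each coefficient. One can sidestep the delicate $E_2$-corrections by using the decomposition $H=\ker(\dd)\oplus\mathrm{im}(I_{z_0})$ noted just before the theorem: since $\dd$ is surjective with the explicit right inverse $I_{z_0}$, I can iterate and evaluate the relation on elements on which lower powers of $\dd$ vanish, peeling off the top power $\dd^k$ first and then inducting downward on $k$. After fixing $k$, reducing to independence of $\{{E_4}^i{E_6}^j\}$ in the integral domain $\cc[E_4,E_6]$ is immediate. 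Combining the spanning and independence arguments yields that the set is a $\cc$-basis.
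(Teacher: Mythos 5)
Your spanning argument and your general strategy for independence are essentially the paper's: the paper dispatches spanning by citing the commutation relations (\ref{eq201805150955}) (your straightening induction via Lemma \ref{lem201805151227} just fills in the same routine detail), and it proves independence exactly by testing the relation against elements built from $\ker(\dd)$ and the right inverse $I_{z_0}$, then invoking the algebraic independence of $E_4$ and $E_6$. However, your induction runs in the wrong direction, and as stated that step fails. There are no nonzero elements ``on which lower powers of $\dd$ vanish'' but the top power does not: the kernels of powers of $\dd$ are nested upward, $\ker(\dd^l)\subset\ker(\dd^{l+1})$, so any $f$ with $\dd^l f=0$ for all $l<K$ also satisfies $\dd^K f=0$. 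Hence no choice of test element can peel off the top coefficient first, and ``inducting downward on $k$'' has no reachable base case by this mechanism. (Your alternative idea of separating by leading exponents of $q^\lambda$ also does not work directly, for the reason you yourself flag: the leading coefficient of the relation applied to $q^\lambda$ only yields the collapsed sums $\sum_{i,j}c_{ijk}$ weighted by the indicial polynomials, so one would have to track the full polynomial dependence on $\lambda$ through the entire $q$-expansion.)

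The workable direction---and the paper's---is bottom-up. For $k=0$, apply the relation to $(1,0)$: since $D_0 1=0$, every term with a positive power of $\dd$ dies, and algebraic independence of $E_4,E_6$ gives $C_{ij0}=0$. For the inductive step, assume $C_{ijk'}=0$ for all $k'<k$ and apply the relation to $(I_{z_0})^k(\eta^{4k},2k)$. The key point is that $\eta^{4k}$, placed in weight $2k$, lies in $\ker(\dd)$ (from $\eta'=\frac{1}{24}E_2\eta$ one computes $D_{2k}\eta^{4k}=0$), so $\dd^l$ annihilates this test element for all $l>k$, while $\dd^k$ returns $(\eta^{4k},2k)$; the terms with $l<k$ drop out by the induction hypothesis, \emph{not} because they vanish on the test element. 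Since $\eta$ is nowhere zero, $\sum_{i,j}C_{ijk}{E_4}^i{E_6}^j\eta^{4k}=0$ forces $C_{ijk}=0$. So your toolbox is the right one, but you must isolate the lowest surviving power of $\dd$ at each stage rather than the highest, and you need the specific kernel elements $\eta^{4k}$ (the kernel of $\dd$ in weight $n$ is spanned by $\eta^{2n}$), which your sketch never identifies.
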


\begin{proof}
By Eq.\ (\ref{eq201805150955}), ${e_4}^i {e_6}^j {\dd}^k$ span $\calr$.
In order to show their linear independence, assume $\sum_{i,j,k\ge0}C_{ijk}{e_4}^i {e_6}^j {\dd}^k=0$.
We prove $C_{ijk}=0$ by induction on $k$.

(1) The case $k=0$.
Since $0=(\sum_{i,j,k\ge0}C_{ijk}{e_4}^i {e_6}^j {\dd}^k)(1,0)\allowbreak=\sum_{i,j\ge0}C_{ij0}{e_4}^i {e_6}^j(1,0)\allowbreak=\sum_{i,j\ge0}(C_{ij0}{E_4}^i {E_6}^j,4i+6j)$,
we have $\sum_{4i+6j=d}C_{ij0}{E_4}^i {E_6}^j \allowbreak =0$ for all $d\ge 0$.
Since $E_4$ and $E_6$ are algebraically independent, $C_{ij0}=0$ for all $i,j\ge0$.

(2) The case $k>0$.
Assume $C_{ijk'}=0$ for all $k'<k$.
Since $0=(\sum_{i,j,l\ge0}C_{ijl}{e_4}^i {e_6}^j {\dd}^l)\allowbreak(I_{z_0})^k(\eta^{4k},2k)=\sum_{i,j\ge0}C_{ijk}{e_4}^i {e_6}^j(\eta^{4k},2k)=\sum_{i,j\ge0}(C_{ijk}{E_4}^i {E_6}^j \eta^{4k},4i+6j+2k)$, we have $\sum_{4i+6j=d}C_{ijk}{E_4}^i {E_6}^j \allowbreak =0$ for all $d\ge 0$.
\end{proof}

By Theorem \ref{thm201712301243}, we have, for example, $\calr_0=\cc$, $\calr_2=\cc \dd$, $\calr_4=\cc e_4\oplus \cc {\dd}^2$, $\calr_6=\cc e_6\oplus \cc e_4 \dd\oplus \cc {\dd}^3$ and $\calr_{n}=\{0\}$ if $n<0$ or $n$ is odd.

\begin{thm}
\label{thm201712301244}
The set $\{{\dd}^k {e_4}^i {e_6}^j \mid i,j,k\in\zz_{\ge 0}\}$ forms a $\cc$-basis of $\calr$.
\end{thm}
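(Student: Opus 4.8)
The plan is to derive the statement from Theorem \ref{thm201712301243} and the commutation formula of Lemma \ref{lem201805151227}(3); no genuinely new idea beyond reordering is needed. Write $B=\{{e_4}^i{e_6}^j{\dd}^k\}$ for the basis already produced and $B'=\{{\dd}^k{e_4}^i{e_6}^j\}$ for the set in question. Both consist of weight-homogeneous elements, since $\dd$, $e_4$, $e_6$ raise weight by $2,4,6$ respectively; the monomial ${e_4}^i{e_6}^j{\dd}^k$ and the monomial ${\dd}^k{e_4}^i{e_6}^j$ both have weight $4i+6j+2k$.

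First I would prove that $B'$ spans $\calr$. Since $B$ already spans, it suffices to express each ${e_4}^i{e_6}^j{\dd}^k$ in terms of $B'$. Setting $a={e_4}^i{e_6}^j\in\calm'$ and applying the second identity in Lemma \ref{lem201805151227}(3),
\begin{equation*}
a{\dd}^k=\sum_{l=0}^k\binom{k}{l}(-1)^l{\dd}^{k-l}D^l[a].
\end{equation*}
By Lemma \ref{lem201805151227}(1) each $D^l[a]$ lies in $\calm'=\cc[e_4,e_6]$, hence is a $\cc$-linear combination of monomials ${e_4}^{i'}{e_6}^{j'}$. Substituting, $a{\dd}^k$ becomes a $\cc$-linear combination of elements of $B'$, which proves that $B'$ spans $\calr$.

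Then I would obtain linear independence by a dimension count exploiting the grading. For fixed $n$, Theorem \ref{thm201712301243} shows that $\calr_n$ is finite-dimensional with $\dim_{\cc}\calr_n$ equal to the number of triples $(i,j,k)\in\zz_{\ge0}^3$ satisfying $4i+6j+2k=n$. The elements of $B'$ of weight $n$ are exactly the ${\dd}^k{e_4}^i{e_6}^j$ with $2k+4i+6j=n$, so they are indexed by the same finite set and hence number $\dim_{\cc}\calr_n$. A spanning set of a finite-dimensional vector space whose cardinality equals the dimension is automatically a basis; applying this in each weight $n$ and taking the direct sum over $n$ shows that $B'$ is a $\cc$-basis of $\calr$.

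I do not expect a serious obstacle here: the substance is carried entirely by Theorem \ref{thm201712301243} and Lemma \ref{lem201805151227}, and the only point requiring care is that the grading makes each $\calr_n$ finite-dimensional, so that the implication \emph{spans plus correct cardinality implies basis} is legitimate. The one thing to double-check is that the reordering in the spanning step does not change the weight, which is immediate from the weight-homogeneity of $\dd$, $e_4$, $e_6$ noted above; this guarantees that the transition between $B$ and $B'$ respects the grading and that the two index sets coincide weight by weight.
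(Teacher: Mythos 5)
Your proposal is correct and matches the paper's proof, which is exactly the one-line observation that the statement follows from Lemma \ref{lem201805151227}(3) together with Theorem \ref{thm201712301243}; your write-up simply makes explicit the spanning step (reordering via the commutation formula, with $D^l[a]\in\calm'$ by Lemma \ref{lem201805151227}(1)) and settles independence by the weight-by-weight dimension count, which is a legitimate filling-in of the same argument. The only point worth noting is that your count compares the \emph{family} $\{{\dd}^k{e_4}^i{e_6}^j\}_{4i+6j+2k=n}$ with $\dim_{\cc}\calr_n$, and a spanning family of length equal to the dimension is a basis, so the a priori possibility of coincidences among the monomials is automatically excluded.
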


\begin{proof}
The result follows from Lemma \ref{lem201805151227} (3) and Theorem \ref{thm201712301243}.
\end{proof}

Let $x,y,d$ be formal symbols and set $V=\cc x\oplus \cc y\oplus \cc d$.
Let $I$ be the two-sided ideal of the tensor algebra $T(V)$ generated by $[x,y]$, $[d,x]+\frac{1}{3}y$ and $[d,y]+\frac{1}{2}x\otimes x$.
Let $J$ be the left $\calr$-submodule of $\calr\otimes_{\cc}\cc$ generated by $D\otimes 1$ and $J'$ the left $\calr$-submodule of $\calr\otimes_{\cc}\cc\Delta$ generated by $D\otimes \Delta$.
Note that $\cc=\calm_0$.

Let $A_2(\cc)$ denote the Weyl algebra generated by $x$, $y$, $\partial/\partial x$ and $\partial/\partial y$.

\begin{thm}
\label{thm201801111434}
\begin{enumerate}
\item
As $\cc$-algebras, $T(V)/I$ is isomorphic to $\calr$.
\item
As left $\calr$-modules, $(\calr\otimes_{\cc}\cc)/J$ is isomorphic to $\calm$.
\item
As left $\calr$-modules, $(\calr\otimes_{\cc}\cc\Delta)/J'$ is isomorphic to $\cals$.
\item
There is a $\cc$-algebra embedding of $\calr$ in $A_2(\cc)$.
\end{enumerate}
\end{thm}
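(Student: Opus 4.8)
The plan is to handle the four parts in order, deriving (1)--(3) directly from the basis theorem (Theorem \ref{thm201712301243}) and reserving the real work for the injectivity in (4).

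For (1), I would first note that the three generators of $I$ are exactly the commutation relations (\ref{eq201805150955}): under $x\mapsto e_4$, $y\mapsto e_6$, $d\mapsto\dd$ we get $[x,y]\mapsto[e_4,e_6]=0$, $[d,x]+\frac{1}{3}y\mapsto[\dd,e_4]+\frac{1}{3}e_6=0$ and $[d,y]+\frac{1}{2}x\otimes x\mapsto[\dd,e_6]+\frac{1}{2}{e_4}^2=0$. Hence there is a well-defined $\cc$-algebra homomorphism $\phi\colon T(V)/I\to\calr$, surjective because $\dd,e_4,e_6$ generate $\calr$. For injectivity I would use the relations to straighten an arbitrary monomial in $x,y,d$ into a $\cc$-combination of the normal-ordered monomials $x^iy^jd^k$ (commute $x$ past $y$; move each $d$ to the right past $x$ and $y$, paying lower-$d$-degree terms), so that $\{x^iy^jd^k\}$ spans $T(V)/I$. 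Since $\phi(x^iy^jd^k)={e_4}^i{e_6}^j{\dd}^k$ and these form a $\cc$-basis of $\calr$ by Theorem \ref{thm201712301243}, any element of $\ker\phi$ written as $\sum c_{ijk}x^iy^jd^k$ maps to $\sum c_{ijk}{e_4}^i{e_6}^j{\dd}^k=0$, forcing all $c_{ijk}=0$; thus $\phi$ is an isomorphism.

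For (2) and (3), I would use the identification $\calr\otimes_{\cc}\cc\cong\calr$ of left $\calr$-modules, under which $J=\calr(\dd\otimes1)$ becomes the left ideal $\calr\dd$, and likewise $\calr\otimes_{\cc}\cc\Delta\cong\calr$ with $J'$ becoming $\calr\dd$. I would then consider the evaluation maps $a\mapsto a\cdot(1,0)$ and $a\mapsto a\cdot(\Delta,12)$, which are left $\calr$-module homomorphisms $\calr\to\calm$ and $\calr\to\cals$. The two crucial facts are $\dd(1,0)=(D_0 1,2)=0$ and $\dd(\Delta,12)=(D_{12}\Delta,14)=0$, the latter from $\Delta'=E_2\Delta$ (equivalently $\cals_{14}=0$). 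Writing $a=\sum c_{ijk}{e_4}^i{e_6}^j{\dd}^k$, these vanishings annihilate every term with $k\geq1$, so the images are exactly $\calm$ and $\calm\Delta=\cals$, while the kernels are exactly $\mathrm{span}\{{e_4}^i{e_6}^j{\dd}^k:k\geq1\}=\calr\dd$. This yields both module isomorphisms.

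For (4), I would produce the embedding explicitly. Seeking $\dd$ as a first-order operator $D=a\,\partial_x+b\,\partial_y$, the demands $[D,x]=-\frac{1}{3}y$ and $[D,y]=-\frac{1}{2}x^2$ force $a=-\frac{1}{3}y$, $b=-\frac{1}{2}x^2$; since $x,y$ already commute, setting $e_4\mapsto x$, $e_6\mapsto y$, $\dd\mapsto D=-\frac{1}{3}y\,\partial_x-\frac{1}{2}x^2\,\partial_y$ respects all relations of $I$ and so defines a homomorphism $\psi\colon\calr\to A_2(\cc)$ by part (1). The main obstacle is injectivity, which I would settle with the order filtration on $A_2(\cc)$, whose associated graded is the commutative polynomial ring $\cc[x,y,\xi,\zeta]$ with $\xi,\zeta$ the symbols of $\partial_x,\partial_y$. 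The principal symbol is $\sigma(D)=-\frac{1}{3}y\xi-\frac{1}{2}x^2\zeta$, so $\sigma({e_4}^i{e_6}^j{\dd}^k)=x^iy^j(\sigma(D))^k$. Given a relation $\sum c_{ijk}x^iy^jD^k=0$, let $K$ be the largest $k$ with a nonzero coefficient; applying the order-$K$ symbol kills the lower-order terms and gives $\sum_{i,j}c_{ijK}x^iy^j(\sigma(D))^K=0$. Reading off the coefficient of $\xi^K$, which is $(-\frac{1}{3})^K\sum_{i,j}c_{ijK}x^iy^{j+K}$ with the monomials $x^iy^{j+K}$ pairwise distinct, forces $c_{ijK}=0$; descending induction on $K$ then gives $c_{ijk}=0$ for all $i,j,k$, so $\psi$ is injective.
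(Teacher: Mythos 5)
Your proposal is correct and follows essentially the same route as the paper: the relation-check plus the basis from Theorem \ref{thm201712301243} for (1), the evaluation maps $a\mapsto a(1,0)$ and $a\mapsto a(\Delta,12)$ (using $\dd(1,0)=0$ and $\dd(\Delta,12)=0$, i.e.\ $\Delta'=E_2\Delta$) for (2)--(3), and the map $e_4\mapsto x$, $e_6\mapsto y$, $\dd\mapsto-\frac{1}{3}y\,\partial/\partial x-\frac{1}{2}x^2\,\partial/\partial y$ for (4). The only difference is that you make explicit the injectivity in (4) via the order filtration and principal symbols, a correct elaboration of what the paper leaves implicit in its appeal to the basis $\{x^iy^j(\partial/\partial x)^k(\partial/\partial y)^l\}$ of $A_2(\cc)$.
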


\begin{proof}
(1) The surjective $\cc$-algebra homomorphism
\begin{equation}
T(V)/I\to\calr,\quad x\mapsto e_4,\quad y\mapsto e_6,\quad d\mapsto \dd.
\end{equation}
is injective by Theorem \ref{thm201712301243}.

(2) The homomorphism
\begin{equation}
(\calr\otimes\cc)/J\to\calm,\quad a\otimes w+J\mapsto aw
\end{equation}
is clearly surjective.
To show the injectivity, let $a=\sum_{i,j,k\ge0}C_{ijk}({e_4}^i{e_6}^j {\dd}^k)\otimes 1+J\in\ker(\psi)$.
Then $\psi(a)=\sum_{i,j,k\ge0}C_{ijk}({e_4}^i{e_6}^j {\dd}^k) 1=\sum_{i,j\ge0}C_{ij0}{E_4}^i{E_6}^j=0$ and $C_{ij0}=0$ for all $i,j\ge0$, so that $a=0$.

(3) The proof is similar to that of (2).

(4) Since $\{x^iy^j(\partial/\partial x)^k(\partial/\partial y)^l \mid i,j,k,l\ge0 \}$ is a $\cc$-basis of $A_2(\cc)$, the map
\begin{equation}
\calr\to A_2(\cc),\quad e_4\mapsto x,\quad e_6\mapsto y,\quad \dd\mapsto -\frac{1}{3}y\frac{\partial}{\partial x}-\frac{1}{2}x^2\frac{\partial}{\partial y}
\end{equation}
is a $\cc$-algebra embedding.
\end{proof}

Recall the properties of \textit{skew polynomial ring} (cf.\ Chapter 1, Section 2 in \cite{mcconnell_robson2001}).
Let $R$ be a ring, $s:R\to R$ a ring homomorphism and $d:R\to R$ a $s$-derivation, that is, $d(a+b)=da+db$ and $d(ab)=(sa)db+(da)b$.
For a \textit{variable} $\xi$, the skew polynomial ring $R[\xi;s,d]$ satisfies the following properties:
(1) every element of $R[\xi;s;d]$ is uniquely expressed as a finite sum of  $r \xi^i$ with $r\in R$ and
(2) $\xi r=(sr)\xi+dr$ for $r\in R$.
When $R=\bigoplus_n R_n$ is a graded ring, $s$ preserves the grade and $d$ elevates the grade by $m$, we can equip $R[\xi;s,d]$ with a natural grading by $\wt(\xi)=m$ and $\wt(a)=n$ for $a\in R_n$ (called a \textit{graded skew polynomial ring}).

\begin{thm}
\label{thm201805311336}
As $\cc$-algebras, $\calm[\xi;1,\dd]$ is isomorphic to $\calr$.
\end{thm}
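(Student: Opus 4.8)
The plan is to construct the obvious candidate isomorphism $\phi\colon\calm[\xi;1,\dd]\to\calr$ determined by $\phi|_{\calm}=\iota$ and $\phi(\xi)=\dd$, and then read off bijectivity from the basis theorems. First I would record that the data defining the skew polynomial ring make sense. Under the identification $\calm\cong\calm'$ via $\iota$, the map $D[-]=[\dd,-]$ restricts to a derivation of $\calm$: the Leibniz rule $D[ab]=D[a]b+aD[b]$ together with Lemma \ref{lem201805151227}(1) shows it maps $\calm$ into $\calm$ and is a derivation, and Lemma \ref{lem201805151227}(2) identifies it with the Ramanujan--Serre derivation $a\mapsto D_{\wt(a)}a$ on $\calm=\cc[E_4,E_6]$. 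Since $\dd\in\End^2(H)$, this derivation raises weight by $2$. Hence, with $s=1$ the identity and $d=\dd$ this grade-$2$ derivation, $\calm[\xi;1,\dd]$ is a well-defined graded skew polynomial ring with $\wt(\xi)=2$.

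Next I would invoke the universal property of skew polynomial rings (a consequence of defining properties (1) and (2) recalled before the statement): to give a $\cc$-algebra homomorphism out of $\calm[\xi;1,\dd]$ it suffices to give a $\cc$-algebra homomorphism $f\colon\calm\to\calr$ and an element $t\in\calr$ with $t\,f(r)=f(sr)\,t+f(dr)$ for all $r\in\calm$. Taking $f=\iota$ and $t=\dd$, the required relation reads $\dd\,\iota(r)=\iota(r)\,\dd+\iota(D_{\wt(r)}r)$, that is $[\dd,\iota(r)]=\iota(D_{\wt(r)}r)$, which is exactly Lemma \ref{lem201805151227}(2) for homogeneous $r$ and extends by $\cc$-linearity. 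This produces a $\cc$-algebra homomorphism $\phi\colon\calm[\xi;1,\dd]\to\calr$ with $\phi(\xi)=\dd$ and $\phi|_{\calm}=\iota$; it is grade-preserving since $\wt(\xi)=2=\wt(\dd)$.

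For bijectivity I would compare explicit bases. Defining property (1) says every element of $\calm[\xi;1,\dd]$ is uniquely a finite sum $\sum_k r_k\xi^k$ with $r_k\in\calm$; combined with the monomial basis $\{{E_4}^i{E_6}^j\}$ of $\calm=\cc[E_4,E_6]$, this shows $\{{E_4}^i{E_6}^j\xi^k\mid i,j,k\ge0\}$ is a $\cc$-basis of $\calm[\xi;1,\dd]$. Under $\phi$ these map to $\{{e_4}^i{e_6}^j{\dd}^k\mid i,j,k\ge0\}$, which is precisely the $\cc$-basis of $\calr$ furnished by Theorem \ref{thm201712301243}. Since $\phi$ carries a basis bijectively onto a basis, it is a $\cc$-linear isomorphism, hence a $\cc$-algebra (indeed graded) isomorphism.

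The only genuinely substantive point, and the one I expect to be the crux, is the well-definedness of $\phi$, which rests entirely on matching the skew commutation rule $\xi r=r\xi+dr$ to the commutator identity $[\dd,\iota(r)]=\iota(D_{\wt(r)}r)$ of Lemma \ref{lem201805151227}(2); once that relation is secured the rest is a formal comparison of bases. A minor bookkeeping hazard is keeping the ordering convention consistent, writing elements as $r\xi^k$ rather than $\xi^k r$, so that $\phi$ lands on the basis $\{{e_4}^i{e_6}^j{\dd}^k\}$ of Theorem \ref{thm201712301243} rather than the reversed basis of Theorem \ref{thm201712301244}. Both conventions work (the two bases are interchanged by Lemma \ref{lem201805151227}(3)), but one must fix a single choice throughout.
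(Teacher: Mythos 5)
Your proposal is correct and follows essentially the same route as the paper: both rest on the basis $\{{e_4}^i{e_6}^j{\dd}^k\}$ from Theorem \ref{thm201712301243} together with the matching of the skew commutation rule $\xi r=r\xi+dr$ with $[\dd,\iota(r)]=\iota(D_{\wt(r)}r)$ from Lemma \ref{lem201805151227}(2). The only difference is organizational --- you build the algebra homomorphism first via the universal property and then verify bijectivity on bases, while the paper defines the $\cc$-linear isomorphism on bases and then observes the commutation relations agree --- which makes your write-up slightly more careful about well-definedness but is the same argument.
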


\begin{proof}
Since $\{{E_4}^i {E_6}^j \xi^k \mid i,j,k\ge0\}$ is a $\cc$-basis of $\calm[\xi;1,\dd]$, the map
\begin{equation}
\calm[\xi;1,\dd]\to\calr,\quad {E_4}^i{E_6}^j \xi^k\mapsto{e_4}^i{e_6}^j{\dd}^k
\end{equation}
is a $\cc$-linear isomorphism.
Since the commutation relation of $E_4,E_6,\xi$ is the same as that of $e_4,e_6,\dd$, we have a $\cc$-algebra isomorphism.
\end{proof}

\begin{cor}
\label{cor201804181423}
The $\cc$-algebra $\calr$ is right and left Noetherian and a right and left Ore domain.
\end{cor}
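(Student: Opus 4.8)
The plan is to reduce everything to standard facts about skew polynomial rings via the isomorphism $\calr\cong\calm[\xi;1,\dd]$ of Theorem \ref{thm201805311336}. Here the base ring $\calm=\cc[E_4,E_6]$ is a commutative domain that is Noetherian by Hilbert's basis theorem (as $\cc$ is a field), and the twisting endomorphism $s=1$ is the identity automorphism. Thus all three assertions---Noetherian, domain, and Ore---will follow once I invoke the corresponding general results for Ore extensions $R[\xi;s,d]$ with $s$ an automorphism and $R$ Noetherian.

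First, for the Noetherian property I would apply the Hilbert basis theorem for skew polynomial rings (Chapter 1, Section 2 in \cite{mcconnell_robson2001}): if $R$ is right (resp.\ left) Noetherian and $s$ is an automorphism, then $R[\xi;s,d]$ is right (resp.\ left) Noetherian. Since $\calm$ is both left and right Noetherian and $s=1$ is an automorphism, $\calr$ is both left and right Noetherian. Next, that $\calr$ is a domain follows from the usual degree argument: writing elements of $\calm[\xi;1,\dd]$ uniquely as $\sum a_i\xi^i$ with $a_i\in\calm$, the leading coefficient of a product of two nonzero elements is the product of their leading coefficients (because $s=1$ does not alter coefficients), which is nonzero since $\calm$ is a domain. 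Hence $\calr$ has no zero divisors.

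Finally, to obtain the Ore conditions I would use the standard fact that a right (resp.\ left) Noetherian domain is right (resp.\ left) Ore. For the right case: given nonzero $a,b\in\calr$, suppose $a\calr\cap b\calr=0$; I claim the right ideal sum $\sum_{i\ge0}a^i b\calr$ is then direct. Indeed, if $\sum_{i\ge j}a^i b r_i=0$ with $r_j\neq0$ for the least such index $j$, then factoring out $a^j$ and cancelling it (valid since $\calr$ is a domain and $a^j\neq0$) yields $b r_j=-a(b r_{j+1}+\cdots)\in a\calr$, so that $0\neq b r_j\in a\calr\cap b\calr=0$, a contradiction. This infinite direct sum violates the ascending chain condition, so $a\calr\cap b\calr\neq0$, i.e.\ $\calr$ is right Ore; the left case is symmetric.

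I expect no serious obstacle here, since each step is a direct citation or a short cancellation argument. The only point that genuinely requires care is that $s$ be an automorphism---this is what makes the two-sided Hilbert basis theorem and the domain argument go through---and it holds trivially because $s=1$. If one preferred to avoid citing the skew-polynomial Hilbert basis theorem, I note that the Noetherian conclusion could instead be proved by hand using the leading-coefficient filtration on $\calm[\xi;1,\dd]$, but invoking \cite{mcconnell_robson2001} is cleaner.
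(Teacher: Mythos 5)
Your proof is correct and follows essentially the same route as the paper: both pass through the isomorphism $\calr\cong\calm[\xi;1,\dd]$ of Theorem \ref{thm201805311336}, invoke the skew-polynomial Hilbert basis theorem from \cite{mcconnell_robson2001} for the Noetherian property, and conclude the Ore condition from the fact that a right (resp.\ left) Noetherian domain is right (resp.\ left) Ore. The only difference is cosmetic: where the paper cites Theorems 1.2.9 and 2.1.15 of \cite{mcconnell_robson2001}, you correctly unpack those citations (the leading-coefficient argument for the domain property and the direct-sum argument for the Ore condition).
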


\begin{proof}
Since $\calm$ is a right and left Noetherian domain, so is $\calr=\calm[\xi;1,\dd]$ (cf.\ Theorem 1.2.9 in \cite{mcconnell_robson2001}).
A right (resp.\ left) Noetherian domain is right (resp.\ left) Ore (cf.\ Theorem 2.1.15 in \cite{mcconnell_robson2001}).
\end{proof}

\subsection{Division of MLDO}
\label{subsection201801181005}
In this subsection, we show some properties of division of two MLDOs (Theorems \ref{thm201712301245} and \ref{thm201712301246}).

By Theorem \ref{thm201712301243}, every $a\in\calr\backslash\{0\}$ can be uniquely expressed as $a_n {\dd}^n+\cdots+a_1 \dd+a_0$, where $a_i\in\calm$ and $a_n\neq 0$.
We define the \textit{top} of $a$ by $\topp(a)=a_n$ and the \textit{order} of $a$ by $\ord(a)=n$.
When $a=0$, we set $\topp(a)=0$ and $\ord(a)=-\infty$.
The conditions $a=0$, $\topp(a)=0$ and $\ord(a)=-\infty$ are equivalent to one another.
We call an MLDO $a\in\calr$ \textit{monic} when $\topp(a)\allowbreak=1$ and \textit{quasimonic} when $\topp(a)(\infty)\allowbreak=1$.
We set $\mcalr=\{a\in\calr\mid a \text{ is monic}\}$ and $\acalr=\{a\in\calr\mid a \text{ is quasimonic}\}$.
For $i\in[-\infty,\infty]$, we set
\begin{align}
\calr^i&=\{a\in\calr\mid \ord(a)=i\},&\calr_n^i&=\calr^i\cap\calr_n, \label{eq201806240958} \\
\calr^{\le i}&=\{a\in\calr\mid \ord(a)\le i\}, &\calr_n^{\le i}&=\calr^{\le i}\cap\calr_n,\\
\calr^{<i}&=\{a\in\calr\mid \ord(a)<i\},& \calr_n^{<i}&=\calr^{<i}\cap\calr_n, \label{eq201806240958_3}
\end{align}
\begin{align}
\mcalr^i&=\mcalr\cap\calr^i, & \mcalr_n&=\mcalr\cap\calr_n, & \mcalr_n^i&=\mcalr\cap\calr_n^i,\\
\acalr^i&=\acalr\cap\calr^i, & \acalr_n&=\acalr\cap\calr_n, & \acalr_n^i&=\acalr\cap\calr_n^i.
\end{align}

\begin{prop}
\label{prop201801051831}
For $a,b\in\calr$, $\topp(ab)=\topp(a)\topp(b)$ and $\ord(ab)=\ord(a)+\ord(b)$.
\end{prop}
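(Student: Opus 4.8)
The plan is to read off both identities from the leading $\dd$-term of the product, using the normal form of Theorem~\ref{thm201712301243} together with the commutation rule of Lemma~\ref{lem201805151227}. First I would dispose of the degenerate case: if $a=0$ or $b=0$, then $ab=0$, so both identities hold under the conventions $\topp(0)=0$ and $\ord(0)=-\infty$. Hence assume $a,b\neq0$ and write $a=\sum_{k=0}^m a_k\dd^k$ and $b=\sum_{l=0}^n b_l\dd^l$ with $a_k,b_l\in\calm$, $a_m=\topp(a)\neq0$, $b_n=\topp(b)\neq0$, where $m=\ord(a)$ and $n=\ord(b)$.

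The key observation is that moving a power of $\dd$ past an element of $\calm$ raises none of the $\dd$-orders. Indeed, for $c\in\calm$ and $k\ge0$, Lemma~\ref{lem201805151227}(3) gives $\dd^k c=\sum_{i=0}^k\binom{k}{i}D^i[c]\,\dd^{k-i}$, and by Lemma~\ref{lem201805151227}(1) each $D^i[c]$ again lies in $\calm$, hence has order $0$. Thus the summand of highest $\dd$-order is the one with $i=0$, so that
\begin{equation}
\dd^k c=c\,\dd^k+(\text{terms of order}<k),
\end{equation}
and consequently $a_k\dd^k b_l\dd^l=a_kb_l\,\dd^{k+l}+(\text{terms of order}<k+l)$.

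Expanding $ab=\sum_{k,l}a_k\dd^k b_l\dd^l$ and noting that $k+l\le m+n$ with equality only for $(k,l)=(m,n)$, I would conclude that the unique contribution of order $m+n$ is $a_mb_n\,\dd^{m+n}=\topp(a)\topp(b)\,\dd^{m+n}$, every other contribution having order strictly less than $m+n$. Since $\calm=\cc[E_4,E_6]$ is an integral domain and $\topp(a),\topp(b)\neq0$, the product $\topp(a)\topp(b)$ is a nonzero element of $\calm$. Therefore $ab\neq0$, its leading coefficient is $\topp(a)\topp(b)$, and its order is $m+n$; that is, $\topp(ab)=\topp(a)\topp(b)$ and $\ord(ab)=\ord(a)+\ord(b)$.

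The only real subtlety — and thus the step I would flag as the main obstacle — is verifying that the corrections produced by the noncommutativity of $\dd$ with $\calm$ remain inside $\calm$ and so cannot raise the $\dd$-order; but this is exactly what Lemma~\ref{lem201805151227}(1) and~(3) provide, so the argument is essentially bookkeeping. Alternatively, one could invoke the isomorphism $\calr\cong\calm[\xi;1,\dd]$ of Theorem~\ref{thm201805311336}: additivity of degree and multiplicativity of leading coefficients in a skew polynomial ring over a domain (with the grade-preserving automorphism $s=1$) yield both claims simultaneously.
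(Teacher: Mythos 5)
Your proof is correct and follows essentially the same route as the paper, which simply cites Lemma \ref{lem201805151227}(3): you expand both factors in the normal form of Theorem \ref{thm201712301243}, push powers of $\dd$ past coefficients using $\dd^k c = c\,\dd^k + (\text{lower order})$, and read off the top coefficient $\topp(a)\topp(b)$, which is nonzero since $\calm$ is a domain. The paper leaves this bookkeeping implicit, so your writeup is just a fully spelled-out version of its one-line argument (and your skew-polynomial alternative via Theorem \ref{thm201805311336} is also legitimate, as that theorem precedes this proposition).
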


\begin{proof}
The result follows from Lemma \ref{lem201805151227} (3).
\end{proof}

\begin{thm}
\label{thm201712301245}
(Division of MLDO, inhomogeneous version)

Let $a\in\calr^i$ and $b\in\calr^j$ with $i\ge j\ge 0$, and let $d\in\calm$ be a common divisor of $\topp(a)$ and $\topp(b)$.
Let $a',b'\in\calm$ satisfy $\topp(a)=a'd$ and $\topp(b)=b'd$.
\begin{enumerate}
\item
$\topp(b)^{i-j}b'a=cb+c'$ for some $c\in\calr^{i-j}$ and $c'\in\calr^{<j}$.
\item
$a\topp(b)^{i-j}b'=bc+c'$ for some $c\in\calr^{i-j}$ and $c'\in\calr^{<j}$.
\end{enumerate}
\end{thm}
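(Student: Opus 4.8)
The plan is to prove both statements by induction on the order $i=\ord(a)$, carrying out part (1) in full and obtaining part (2) by the mirror-image argument in which left and right multiplication are interchanged. The only tools needed are the multiplicativity $\topp(ab)=\topp(a)\topp(b)$, $\ord(ab)=\ord(a)+\ord(b)$ of Proposition \ref{prop201801051831}, the uniqueness of the expression $\sum a_k\dd^k$ from Theorem \ref{thm201712301243}, and the fact that $\calm=\cc[E_4,E_6]$ is an integral domain. I would first record that $\topp(a)=a'd$ and $\topp(b)=b'd$ are nonzero (as $a,b\ne0$), so $a',b',d$ are all nonzero, and that $\topp(a'\dd^{i-j})=a'$ since $\topp(\dd^{i-j})=1$.

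The base case $i=j$ is immediate: $b'a$ and $a'b$ both have order $j$ and the same top $a'b'd$, so $c':=b'a-a'b\in\calr^{<j}$ and one may take $c:=a'\in\calr^{0}=\calr^{i-j}$. For $i>j$ the decisive step is a single \emph{refined} reduction that exploits the common divisor $d$: since $b'a$ and $a'\dd^{i-j}b$ both have order $i$ and top $a'b'd$ (Proposition \ref{prop201801051831} gives $\topp(a'\dd^{i-j}b)=a'\topp(b)=a'b'd$), the element
\[
r:=b'a-a'\dd^{i-j}b
\]
satisfies $\ord(r)\le i-1$. This cancels the leading term of $b'a$ using only the factor $b'$ rather than the full $\topp(b)=b'd$; saving this one factor of $d$ is exactly what produces the multiplier $\topp(b)^{i-j}b'$ instead of $\topp(b)^{i-j+1}$.

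If $\ord(r)<j$ I am done, taking $c=\topp(b)^{i-j}a'\dd^{i-j}$ and $c'=\topp(b)^{i-j}r$, both of the required orders since $\topp(c)=\topp(b)^{i-j}a'\ne0$. If instead $j\le\ord(r)=:i_1\le i-1$, I apply the induction hypothesis to the pair $(r,b)$ with common divisor $1$: the crucial point is that $d$ need \emph{not} divide $\topp(r)$, so the refinement is no longer available and I revert to the generic case, obtaining $\topp(b)^{i_1-j+1}r=qb+c''$ with $q\in\calr^{i_1-j}$ and $c''\in\calr^{<j}$. Multiplying by the remaining power $\topp(b)^{i-i_1-1}$ (nonnegative since $i_1\le i-1$) and substituting into $\topp(b)^{i-j}b'a=\topp(b)^{i-j}a'\dd^{i-j}b+\topp(b)^{i-j}r$ gives $\topp(b)^{i-j}b'a=cb+c'$ with $c=\topp(b)^{i-j}a'\dd^{i-j}+q'$ where $\ord(q')\le i_1-j<i-j$; hence $\topp(c)=\topp(b)^{i-j}a'\ne0$, so $c\in\calr^{i-j}$ and $c'\in\calr^{<j}$, closing the induction.

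The step requiring the most care — and the main obstacle — is the order bookkeeping under noncommutativity: I must check repeatedly that multiplying a relation on the left by a modular form $\topp(b)^{s}\in\calm=\calr^{0}$ preserves both the ``right multiple of $b$'' shape and all orders (which holds because $\topp$ is multiplicative on $\calm$, a domain), and that the exponents accumulate to exactly $i-j$. The clean way to control the latter is to pad with extra factors of $\topp(b)$ so that every remainder relation carries the uniform exponent $i-j$ before it is added to the leading term $\topp(b)^{i-j}a'\dd^{i-j}b$. Part (2) runs identically starting from $r:=ab'-b\dd^{i-j}a'$ and applying part (2) of the induction hypothesis to $r$ on the right; since $\topp(b)^{i-j}$ and $b'$ lie in the commutative ring $\calm$, one has $ab'\topp(b)^{i-j}=a\topp(b)^{i-j}b'$, keeping the two statements genuinely symmetric.
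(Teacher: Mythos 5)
Your proof is correct and follows essentially the same route as the paper's: the same reduction element $b'a-a'\dd^{i-j}b$, the same case split on whether its order drops below $j$, the same recursive application of the statement with trivial common divisor $d=1$, and the same padding by powers of $\topp(b)$ (your $\topp(b)^{i-i_1-1}$ is exactly the paper's $\topp(b)^{k}$ with $k=i-j'-1$). The only cosmetic difference is in part (2), where you take $ab'-b\dd^{i-j}a'$ in place of the paper's $ab'-ba'\dd^{i-j}$; both have order below $i$, so the argument is unaffected.
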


\begin{proof}
(1) Note that $d$, $a'$ and $b'$ cannot be $0$.
We prove (1) by induction on $i$.

(I) The case $i=j$.
Set $c=a'\in\calr^0$ and $c'=b'a-cb\in\calr^{<j}$.

(II) The case $i>j$.
Set $e=b'a-a'{\dd}^{i-j}b\in\calr^{<i}$.

(II-i) The case $e\in\calr^{<j}$.
Set $c=\topp(b)^{i-j}a'{\dd}^{i-j}\in\calr^{i-j}$ and $c'=\topp(b)^{i-j}e\in\calr^{<j}$.

(II-ii) The case $e\in\calr^{j'}$ with $i> j' \ge j$.
Set $k=i-j'-1\ge 0$.
By the induction hypothesis, $\topp(b)^{j'-j+1}e=fb+g$ for some $g\in\calr^{<j}$ and $f\in\calr^{j'-j}$.
We have $\topp(b)^{i-j}b'a
=(\topp(b)^{i-j}a'{\dd}^{i-j}+\topp(b)^k f)b+\topp(b)^k g$.
Set $c=\topp(b)^{i-j}a'{\dd}^{i-j}+\topp(b)^k f\in\calr^{i-j}$ and $c'=\topp(b)^k g\in\calr^{<j}$.

(2) We only give a sketch.
(I) Set $c=a'$ and $c'=ab'-ba'$.
(II) Set $e=ab'-ba'{\dd}^{i-j}$.
(II-i) Set $c=a'{\dd}^{i-j}\topp(b)^{i-j}$ and $c'=e\topp(b)^{i-j}$.
(II-ii) Set $c=a'{\dd}^{i-j}\topp(b)^{i-j}+f\topp(b)^k$ and $c'=g\topp(b)^k$.
\end{proof}

\begin{thm}
\label{thm201712301246}
(Division of MLDO, homogeneous version)

Let $a\in\calr_n^i$ and $b\in\calr_m^j$ with $i\ge j\ge 0$ and $m,n\ge 0$, and let $d\in\calm_l$ be a common divisor of $\topp(a)\in\calm_{n-2i}$ and $\topp(b)\in\calm_{m-2j}$.
Let $a'\in\calm_{n-2i-l}$ and $b'\in\calm_{m-2j-l}$ satisfy $\topp(a)=a'd$ and $\topp(b)=b'd$.
Then, for $p=(m-2j)(i-j+1)-l-m+n$ and $q=(m-2j)(i-j+1)-l+n$, the following hold:
\begin{enumerate}
\item
$\topp(b)^{i-j}b'a=cb+c'$ for some $c\in\calr_{p}^{i-j}$ and $c'\in\calr_{q}^{<j}$,
\item
$a\topp(b)^{i-j}b'=cb+c'$ for some $c\in\calr_{p}^{i-j}$ and $c'\in\calr_{q}^{<j}$.
\end{enumerate}
\end{thm}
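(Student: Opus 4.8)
The plan is to obtain the homogeneous version (Theorem~\ref{thm201712301246}) directly from the inhomogeneous version (Theorem~\ref{thm201712301245}) by tracking weights, rather than redoing the inductive construction. Since Theorem~\ref{thm201712301245} already guarantees the existence of $c\in\calr^{i-j}$ and $c'\in\calr^{<j}$ with $\topp(b)^{i-j}b'a=cb+c'$ (and symmetrically for part (2)), the only remaining task is to verify that, under the homogeneity hypotheses $a\in\calr_n^i$, $b\in\calr_m^j$, $d\in\calm_l$, the elements $c$ and $c'$ produced there are in fact homogeneous of the asserted weights $p$ and $q$. Because $\calr=\bigoplus_{i}\calr_i$ is graded and both $e_4,e_6$ and $\dd$ are homogeneous (of weights $4$, $6$, $2$), every operation in the proof of Theorem~\ref{thm201712301245}---multiplication, the commutator $D[\,\cdot\,]$, and taking tops---preserves or predictably shifts weight, so the homogeneous components decouple.

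Concretely, first I would record the weight bookkeeping for the ingredients. From $\topp(a)\in\calm_{n-2i}$ and $\topp(b)\in\calm_{m-2j}$ we get $\wt(a')=n-2i-l$ and $\wt(b')=m-2j-l$, and $\wt(\topp(b)^{i-j})=(m-2j)(i-j)$. The left-hand side $\topp(b)^{i-j}b'a$ is therefore homogeneous of weight
\begin{equation*}
(m-2j)(i-j)+(m-2j-l)+n,
\end{equation*}
and a short simplification shows this equals $q=(m-2j)(i-j+1)-l+n$. The key consistency check is then that $c$ has weight exactly $p$: since $c\in\calr^{i-j}$ multiplies $b\in\calr_m^j$ on the left to contribute to a weight-$q$ expression, one needs $\wt(c)+\wt(b)=q$, i.e. $p+m=q$, which indeed holds because $q-p=m$ by the definitions of $p$ and $q$. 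Thus the weights $p$ and $q$ are forced by the left-hand side and by the grading of $\calr$, and are mutually compatible with the relation $c'=\topp(b)^{i-j}b'a-cb$.

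The cleanest way to finish is to observe that the whole left-hand side $\topp(b)^{i-j}b'a$ lives in the single homogeneous component $\calr_q$. I would apply Theorem~\ref{thm201712301245} to get $c\in\calr^{i-j}$ and $c'\in\calr^{<j}$, then project onto $\calr_q$: writing $c=\sum_t c_t$ and $c'=\sum_t c'_t$ in homogeneous components, the relation $\topp(b)^{i-j}b'a=cb+c'$ splits by weight, and only the component with $\wt(c_t b)=\wt(c'_t)=q$ survives on the right. Replacing $c,c'$ by their degree-$p$ and degree-$q$ parts $c_p,c'_q$ respectively yields $c_p\in\calr_p^{\le i-j}$ and $c'_q\in\calr_q^{<j}$ still satisfying the identity; that $c_p$ has order exactly $i-j$ (not less) follows from Proposition~\ref{prop201801051831} by comparing tops, since $\topp(\topp(b)^{i-j}b'a)=\topp(b)^{i-j}b'\topp(a)\neq 0$ forces $\ord(c_pb)=i$ and hence $\ord(c_p)=i-j$. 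Part (2) is handled identically using the second assertion of Theorem~\ref{thm201712301245}, with the same weight count since the left-hand side $a\topp(b)^{i-j}b'$ again has weight $q$.

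The main obstacle, such as it is, is purely arithmetic: correctly reconciling the two slightly asymmetric formulas for $p$ and $q$ with the additive weight contributions and confirming the single relation $q-p=m=\wt(b)$ that makes the projection self-consistent. There is no genuine analytic or algebraic difficulty beyond the grading, because Theorem~\ref{thm201712301245} already supplies the existence statement and the homogeneity is automatic once one commits to the graded structure of $\calr$; the proof is therefore essentially a weight-counting refinement of the inhomogeneous division rather than a new argument.
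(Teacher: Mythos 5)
Your proof is correct, but it takes a genuinely different route from the paper. The paper's own proof of Theorem \ref{thm201712301246} consists of the instruction to \emph{trace} the inductive construction of Theorem \ref{thm201712301245}, re-running cases (I), (II-i), (II-ii) with homogeneous inputs and recording the weights of the intermediate elements (it notes $\wt(e)=(m-2j)-l+n$, $\wt(f)=(m-2j)(j'-j+2)-l-m+n$ and $\wt(g)=(m-2j)(j'-j+2)-l+n$ in case (II-ii)); homogeneity of $c,c'$ is thereby obtained constructively, since every operation in the induction sends homogeneous elements to homogeneous elements. You instead treat Theorem \ref{thm201712301245} as a black box and extract the homogeneous refinement a posteriori: the left-hand side $\topp(b)^{i-j}b'a$ is homogeneous of weight $q$, the identity splits along the grading because $b\in\calr_m^j$ is homogeneous, and projecting onto $\calr_q$ replaces $c,c'$ by $c_p,c'_q$ while preserving the identity. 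The two supporting facts you need do hold: taking homogeneous components cannot increase order, by the unique expression $\sum_k a_k{\dd}^k$ from Theorem \ref{thm201712301243} with $a_k\in\calm$ graded and $\dd$ homogeneous of weight $2$, so $c'_q\in\calr_q^{<j}$; and the exactness $\ord(c_p)=i-j$ (needed because $\calr_p^{i-j}$ means order \emph{exactly} $i-j$) follows, as you argue, from $\topp(\topp(b)^{i-j}b'a)=\topp(b)^{i-j}b'\topp(a)\neq0$ together with Proposition \ref{prop201801051831} and $\ord(c'_q)<j\le i$. What the paper's approach buys is a constructive derivation of the weight formulas from the induction itself; what yours buys is brevity and robustness, showing the homogeneous statement is automatic from the graded structure alone, in the spirit of the paper's remark that these division results hold for any graded skew polynomial ring $R[x;1,d]$ over a graded commutative domain. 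One small point: in part (2) you correctly invoke the second assertion of Theorem \ref{thm201712301245}, which produces $bc+c'$ (left division); the displayed homogeneous statement writes $cb+c'$ in (2), evidently a typo, and your weight count $\wt(c)=q-m=p$ is unaffected either way.
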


\begin{proof}
Trace the proof of the inhomogeneous version.
Note that in (II-ii), $\wt(e)=(m-2j)-l+n$, $\wt(f)=(m-2j)(j'-j+2)-l-m+n$ and $\wt(g)=(m-2j)(j'-j+2)-l+n$.
\end{proof}

\begin{cor}
\label{cor201802231435}
Let $a\in\calr$ and $b\in\calr^j$ with $\topp(b)=1$.
\begin{enumerate}
\item
$a=cb+c'$ for some $c\in\calr$ and $c'\in\calr^{<j}$.
\item
$a=bc+c'$ for some $c\in\calr$ and $c'\in\calr^{<j}$.
\end{enumerate}
\end{cor}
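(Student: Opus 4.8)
The plan is to read off both statements directly from the inhomogeneous division theorem (Theorem \ref{thm201712301245}), exploiting the hypothesis $\topp(b)=1$ to make the leading-coefficient prefactors appearing there collapse to the identity.

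First I would dispose of the degenerate case. If $a=0$, or more generally if $\ord(a)<j$, then $a\in\calr^{<j}$ already, so taking $c=0$ and $c'=a$ settles both (1) and (2) simultaneously (for (2) one writes $a=b\cdot 0+a$). Observe also that since $\topp(b)=1\neq 0$ we have $b\neq 0$, hence $j=\ord(b)\ge 0$; thus the order hypotheses $i\ge j\ge 0$ required by Theorem \ref{thm201712301245} are available in the nondegenerate case.

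In the remaining case $i:=\ord(a)\ge j$, I would apply Theorem \ref{thm201712301245} with the common divisor chosen to be $d=1$. Since $\topp(b)=1$, the factorizations $\topp(a)=a'd$ and $\topp(b)=b'd$ force $a'=\topp(a)$ and $b'=1$. Substituting into part (1) of that theorem gives $\topp(b)^{i-j}b'a=1^{i-j}\cdot 1\cdot a=a=cb+c'$ for some $c\in\calr^{i-j}\subset\calr$ and $c'\in\calr^{<j}$, which is precisely statement (1). Statement (2) follows in the same way from part (2) of Theorem \ref{thm201712301245}, since there too the left prefactor $\topp(b)^{i-j}b'$ equals $1$.

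There is essentially no obstacle here: the entire content resides in the division theorems already proved, and the only thing to verify is the bookkeeping that trivializes the prefactors. The one point requiring a moment's care is the case split on $\ord(a)$ versus $j$, needed because Theorem \ref{thm201712301245} presupposes $i\ge j$; once $\ord(a)<j$ is handled separately, the monic hypothesis $\topp(b)=1$ does all the remaining work.
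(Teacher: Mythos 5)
Your proof is correct and follows essentially the same route as the paper: both reduce the statement to Theorem \ref{thm201712301245} applied with $d=1$, so that the hypothesis $\topp(b)=1$ forces $b'=1$ and the prefactor $\topp(b)^{i-j}b'$ to collapse to the identity. The only cosmetic difference is that the paper first writes $a$ as a sum of pieces of order $<j$ and of order $\ge j$ and divides each high-order piece separately, whereas you use the fact that a nonzero element of $\calr$ has a single well-defined order and dispose of the case $\ord(a)<j$ by taking $c=0$, $c'=a$; the two bookkeeping devices are interchangeable.
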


\begin{proof}
We only prove (1).
Set $a=\sum_{i} a_i+\sum_{i} \tilde{a}_i$, where $a_i\in\calr^{n_i}$, $\tilde{a}_i\in\calr^{m_i}$ and $n_i<j\le m_i$.
Then $\tilde{a}_i=c_i b+c'_i$ for some $c_i\in\calr^{m_i-j}$ and $c'_j\in\calr^{<j}$.
Set $c=\sum_{i} c_i\in\calr$ and $c'=\sum_{i} a_i +\sum_{i} c'_i\calr^{<j}$.
\end{proof}

\begin{cor}
Let $a\in\calr_k$ and $b\in\calr^j_{2j}$ with $\topp(b)=1$.
\begin{enumerate}
\item
$a=cb+c'$ for some $c\in\calr_{k-2j}$ and $c'\in\calr^{<j}_{k}$.
\item
$a=bc+c'$ for some $c\in\calr_{k-2j}$ and $c'\in\calr^{<j}_{k}$.
\end{enumerate}
\end{cor}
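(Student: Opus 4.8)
The plan is to deduce this weight-graded statement from the monic division already established in Corollary~\ref{cor201802231435}, by applying that result and then projecting onto the weight-$k$ component, exploiting that $\calr$ is a graded $\cc$-algebra and that $b$ is weight-homogeneous. Concretely, I would first invoke Corollary~\ref{cor201802231435}, whose only hypothesis is $\topp(b)=1$, to obtain $c_0\in\calr$ and $c_0'\in\calr^{<j}$ with $a=c_0b+c_0'$ for part (1) and $a=bc_0+c_0'$ for part (2). The entire remaining task is then to pin down $c_0$ and $c_0'$ in the prescribed weight spaces $\calr_{k-2j}$ and $\calr^{<j}_k$.

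The key observation is that the order filtration is compatible with the weight grading, i.e.\ $\calr^{<j}=\bigoplus_r \calr^{<j}_r$. Indeed, in terms of the basis $\{{e_4}^i{e_6}^l{\dd}^k\}$ of Theorem~\ref{thm201712301243}, the order of an element is the largest exponent of $\dd$ occurring in its expansion, and passing to a fixed-weight homogeneous component only discards basis vectors, so it cannot raise the order. Granting this, write $c_0=\sum_r (c_0)_r$ and $c_0'=\sum_r (c_0')_r$ for the weight decompositions, with $(c_0')_r\in\calr^{<j}_r$. Since $a\in\calr_k$ and $b\in\calr_{2j}$, I would project the identity $a=c_0b+c_0'$ onto $\calr_k$: right multiplication by the homogeneous element $b$ raises weight by $2j$, so the weight-$k$ part of $c_0b$ is $(c_0)_{k-2j}\,b$, while the weight-$k$ part of $c_0'$ is $(c_0')_k\in\calr^{<j}_k$. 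Hence $a=(c_0)_{k-2j}\,b+(c_0')_k$, and taking $c=(c_0)_{k-2j}\in\calr_{k-2j}$ and $c'=(c_0')_k\in\calr^{<j}_k$ proves (1). Part (2) follows identically, now using that left multiplication by $b$ raises weight by $2j$, so the weight-$k$ part of $bc_0$ is $b\,(c_0)_{k-2j}$.

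There is no serious obstacle here: the substantive content already lives in Corollary~\ref{cor201802231435}, and what remains is grading bookkeeping. The one point that genuinely needs verification, rather than being taken for granted, is the compatibility of $\calr^{<j}$ with the weight grading used above; without it, projecting the remainder onto weight $k$ would not obviously preserve the order bound $\ord(c')<j$. As an alternative that sidesteps Corollary~\ref{cor201802231435} entirely, one could argue directly from the homogeneous division Theorem~\ref{thm201712301246}: decompose $a=\sum_i a_i$ into its order-homogeneous pieces $a_i\in\calr_k^i$, send the pieces with $i<j$ straight into the remainder, and divide each piece with $i\ge j$ by $b$ using Theorem~\ref{thm201712301246} with $d=1$, $l=0$ and $b'=1$. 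Since then $\topp(b)^{i-j}b'=1$ and $m-2j=0$, the output weights collapse to $p=k-2j$ and $q=k$ uniformly in $i$, so summing the quotients (all in $\calr_{k-2j}$) and the remainders (all in $\calr^{<j}_k$) yields the same conclusion.
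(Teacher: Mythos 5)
Your proposal is correct, and your primary argument takes a genuinely different route from the paper's. The paper disposes of this corollary with ``the proof is similar to that of Corollary \ref{cor201802231435}'', i.e.\ exactly the argument you sketch as an alternative at the end: decompose $a$ into order-homogeneous pieces $a_i\in\calr_k^i$, absorb those with $i<j$ into the remainder, and apply the homogeneous division Theorem \ref{thm201712301246} to each piece with $i\ge j$, taking $d=b'=1$ and $l=0$, $m=2j$, $n=k$, so that the weight formulas collapse to $p=k-2j$ and $q=k$ uniformly in $i$ (your parameter bookkeeping here matches the paper's; note the theorem's hypothesis $m,n\ge 0$ is harmless since $\calr_k=\{0\}$ for $k<0$). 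Your main route instead treats the inhomogeneous Corollary \ref{cor201802231435} as a black box and recovers the weight constraints by projecting the identity $a=c_0b+c_0'$ (resp.\ $a=bc_0+c_0'$) onto the weight-$k$ component, and you correctly isolate and verify the one nontrivial ingredient: that the order filtration is compatible with the grading, $\calr^{<j}=\bigoplus_r\calr_r^{<j}$, which holds because the basis elements ${e_4}^i{e_6}^l{\dd}^m$ of Theorem \ref{thm201712301243} are weight-homogeneous, so passing to a weight component only discards basis terms and cannot raise the $\dd$-degree. The comparison: your projection argument is shorter, reuses the inhomogeneous statement verbatim, and exhibits a general principle (division by a homogeneous divisor in a graded ring can be ``homogenized'' a posteriori whenever the remainder space is graded); the paper's route stays homogeneous throughout and yields slightly finer output, namely a quotient built from pieces $c_i\in\calr_{k-2j}^{i-j}$, hence $\ord(c)=\ord(a)-j$ when $\ord(a)\ge j$ --- information the statement does not require and your projection discards. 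Both proofs are complete and correct.
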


\begin{proof}
The proof is similar to that of Corollary \ref{cor201802231435}.
\end{proof}

Let $a\in\calr$ and $b\in\calr\backslash\{0\}$.
If $a=cb$ (resp.\ $a=bc$) for some $c\in\calr$, we say that $a$ is \textit{divisible} by $b$ from the right (resp.\ left).  
Such $c$ is unique since $\calr$ is an integral domain.
We denote it by $a/b$ (resp.\ $b\backslash a$).

\begin{rem}
All the results in this subsection hold for a graded skew polynomial ring $S=R[x;1,d]$, where $R$ is a graded commutative integral domain and $d$ is a graded derivation of $R$.
\end{rem}

\section{Algebraic structure of the solutions to MLDEs}
\label{section20180614}

In this section, we study an algebraic structure of the solutions to MLDEs.
We also show that every quasimodular form satisfies a monic MLDE of some weight.
Subsection \ref{subsct201806231428} is devoted to the monic case and \ref{subsct201806231430} to the non-monic case.

\subsection{Monic case}
\label{subsct201806231428}
We set:
\begin{align}
S_k&=\{(f,k)\in H_k\mid a(f,k)=0\text{ for some }a\in\mcalr\},\\
\Sigma_k&=\{f \in\hol\mid f\text{ satisfies some monic MLDE of weight }k\}.
\end{align}
Note that $f \in \Sigma_k$ if and only if $(f,k)\in S_k$.

\begin{lem}
\label{lem201804181457}
For $a\in\calr$ and $b\in\mcalr$, there exist $a'\in\mcalr$ and $b'\in\calr$ such that $a'a=b'b$.
\end{lem}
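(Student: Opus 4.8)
The plan is to produce the required monic $a'$ as a monic operator annihilating the class of $a$ in the cyclic left module $M:=\calr/\calr b$, obtaining it from the Noetherianity of $M$ over the coefficient ring $\calm$. Set $j=\ord(b)$ and write $\overline{x}=x+\calr b$ for $x\in\calr$. Since $\topp(b)=1$, Corollary \ref{cor201802231435}(1) shows every coset $\overline{x}$ has a representative in $\calr^{<j}$, and this representative is unique because $\ord(cb)=\ord(c)+j$ by Proposition \ref{prop201801051831}. As $\calr=\bigoplus_{k\ge 0}\calm\,\dd^{k}$ as a left $\calm$-module (Theorem \ref{thm201712301243}), the remainder map identifies $M$ with $\calr^{<j}=\bigoplus_{k=0}^{j-1}\calm\,\dd^{k}$ as left $\calm$-modules. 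Hence $M$ is a free $\calm$-module of rank $j$, and in particular a Noetherian $\calm$-module since $\calm=\cc[E_4,E_6]$ is Noetherian.

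Next I would reformulate the goal: finding a monic $a'$ with $a'a=b'b$ for some $b'\in\calr$ is exactly finding a monic $a'\in\mcalr$ with $a'\overline{a}=0$ in $M$, i.e.\ a monic left annihilator of $\overline{a}$. Consider the ascending chain of $\calm$-submodules $M_N:=\sum_{k=0}^{N}\calm\,\dd^{k}\overline{a}\subseteq M$, whose union is the cyclic left $\calr$-submodule $\calr\overline{a}$ (again using $\calr=\bigoplus_k\calm\,\dd^k$). By Noetherianity this chain stabilizes, so there is an $N\ge 1$ with $\dd^{N}\overline{a}=\sum_{k=0}^{N-1}g_k\,\dd^{k}\overline{a}$ for some $g_k\in\calm$. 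Setting $a':=\dd^{N}-\sum_{k=0}^{N-1}\iota(g_k)\dd^{k}$ then yields $a'\overline{a}=0$, i.e.\ $a'a\in\calr b$, and the desired $b'$ is the unique operator with $a'a=b'b$. (When $\overline{a}=0$, i.e.\ $a\in\calr b$, one simply takes $a'=1$ and $b'=a/b$.)

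The main obstacle, compared with merely invoking the left Ore property of $\calr$ (Corollary \ref{cor201804181423}) to get \emph{some} nonzero left multiplier of $a$ lying in $\calr b$, is arranging that this multiplier be \emph{monic}. This is precisely where the argument uses that $\dd$ has top equal to $1$: by Proposition \ref{prop201801051831} we have $\topp(\dd^{N})=1$, while each term $\iota(g_k)\dd^{k}$ has order $k<N$, so the relation above produces an operator whose leading term is exactly $\dd^{N}$. Therefore $\ord(a')=N$ and $\topp(a')=1$, giving $a'\in\mcalr$. I expect the only points needing care to be the verification that the remainder identification $M\cong\calr^{<j}$ is genuinely $\calm$-linear (which reduces to $\iota(g)c'\in\calr^{<j}$ whenever $c'\in\calr^{<j}$) and that the chain $\{M_N\}$ exhausts $\calr\overline{a}$; both are routine once the left $\calm$-module decomposition of $\calr$ from Theorem \ref{thm201712301243} is in hand.
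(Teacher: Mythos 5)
Your proof is correct. The paper disposes of this lemma in one line, citing Lemma~1.2 of \cite{jategaonkar72} or Proposition~2.2 of \cite{resco_small_stafford82} for the fact that the monic operators $\mcalr$ form a left Ore set in $\calr$; you instead give a self-contained argument, which is essentially the standard proof behind those cited results, specialized to $\calr\simeq\calm[\xi;1,\dd]$. Your steps all check out against the paper's toolkit: the existence and uniqueness of remainders in $\calr^{<j}$ follow from Corollary~\ref{cor201802231435}(1) and Proposition~\ref{prop201801051831} exactly as you say; the identification $\calr/\calr b\cong\calr^{<j}=\bigoplus_{k=0}^{j-1}\calm\,\dd^k$ is $\calm$-linear because $\ord(\iota(g)c')=\ord(c')<j$; and once $M=\calr/\calr b$ is known to be a finitely generated (indeed free of rank $j$) module over the Noetherian ring $\calm=\cc[E_4,E_6]$, the ACC applied to $M_N=\sum_{k=0}^{N}\calm\,\dd^k\overline{a}$ yields a relation whose leading term is exactly $\dd^N$, hence a genuinely \emph{monic} $a'$ --- which, as you correctly emphasize, is the whole point beyond the bare left Ore property of Corollary~\ref{cor201804181423} (that alone would only give some nonzero left multiplier of $a$ in $\calr b$, with no control on its top). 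What your route buys is transparency and independence from external references, making visible where Noetherianity of $\calm$ and monicity of $\dd$ enter; what the paper's citation buys is brevity and the general statement for skew polynomial rings over arbitrary Noetherian coefficient rings. One cosmetic remark: the stabilization of the chain a priori gives some index $N_0$ with $M_{N_0}=M_{N_0+1}=\cdots$, from which your relation with $N\ge 1$ follows by taking any $N>N_0$; it is worth phrasing it that way, as you implicitly do.
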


\begin{proof}
By Lemma 1.2 in \cite{jategaonkar72} or Proposition 2.2 in \cite{resco_small_stafford82}, $\mcalr$ is a left Ore set in $\calr$.
\end{proof}

\begin{lem}
\label{lem201804240917}
For $\phi\in H_k$, the following conditions are equivalent.
\begin{enumerate}
\item
$\phi\in S_k$.
\item
The $\calm$-module $\calr \phi=\sum_{i=0}^{\infty}\calm {\dd}^i \phi$ is finitely generated.
\item
There exists $n\in\zz_{\ge 0}$ such that $\phi,\dd \phi,\ldots,{\dd}^n\phi$ generate $\calr \phi$ over $\calm$.
\end{enumerate}
\end{lem}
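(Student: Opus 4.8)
The plan is to prove the three equivalences through the cycle $(1)\Rightarrow(3)\Rightarrow(2)\Rightarrow(1)$, with the basis Theorem \ref{thm201712301243} supplying the crucial structural input. By that theorem $\calr=\bigoplus_{k\ge0}\calm{\dd}^k$ as a left $\calm$-module, so the equality $\calr\phi=\sum_{i=0}^{\infty}\calm{\dd}^i\phi$ asserted in (2) is automatic. I would set $M_n=\sum_{i=0}^{n}\calm{\dd}^i\phi$, giving an ascending chain $M_0\subseteq M_1\subseteq\cdots$ of $\calm$-submodules whose union is $\calr\phi$.

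The technical heart is a single propagation claim: \emph{if ${\dd}^{N+1}\phi\in M_N$ for some $N$, then $\calr\phi=M_N$.} I would show ${\dd}^m\phi\in M_N$ for all $m$ by induction, the substantive step being the passage from $m$ to $m+1$ for $m\ge N$: writing ${\dd}^m\phi=\sum_{i=0}^N c_i{\dd}^i\phi$ with $c_i\in\calm$ and applying $\dd$, the fact that $\dd$ is a derivation of $H$ together with $\dd\calm\subseteq\calm$ (valid since $\calm$ is a left $\calr$-submodule of $H$) gives ${\dd}^{m+1}\phi=\sum_{i=0}^N(\dd c_i){\dd}^i\phi+\sum_{i=0}^N c_i{\dd}^{i+1}\phi$; every term lies in $M_N$ except possibly $c_N{\dd}^{N+1}\phi$, which lies in $M_N$ exactly by the hypothesis. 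This step, where the derivation structure rather than mere module theory does the work, is the one I expect to require the most care, though it is ultimately a short induction.

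Granting the claim, the cycle is immediate. For $(1)\Rightarrow(3)$: a monic $a\in\mcalr$ of order $n$ with $a\phi=0$ reads ${\dd}^n\phi=-\sum_{i<n}a_i{\dd}^i\phi\in M_{n-1}$, so the claim with $N=n-1$ yields $\calr\phi=M_{n-1}$, i.e. $\phi,\dd\phi,\dots,{\dd}^{n-1}\phi$ generate $\calr\phi$ over $\calm$. The implication $(3)\Rightarrow(2)$ is trivial. For $(2)\Rightarrow(1)$: since $\calm=\cc[E_4,E_6]$ is a polynomial ring it is Noetherian, so the finitely generated $\calm$-module $\calr\phi$ is Noetherian and the chain $(M_n)$ stabilizes; hence ${\dd}^{N+1}\phi\in M_N$ for some $N$, say ${\dd}^{N+1}\phi=\sum_{i=0}^N b_i{\dd}^i\phi$ with $b_i\in\calm$. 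Then $a={\dd}^{N+1}-\sum_{i=0}^N b_i{\dd}^i$ has $\topp(a)=1$, so $a\in\mcalr$, and $a\phi=0$ gives $\phi\in S_k$. This closes the cycle, and I note that it also delivers the explicit generating set $\phi,\dd\phi,\dots,{\dd}^{n-1}\phi$ promised in (3) directly from the order of an annihilating monic MLDO.
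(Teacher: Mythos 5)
Your proof is correct and takes essentially the same route as the paper: the paper delegates the equivalence $(1)\Leftrightarrow(2)$ to Lemma 2.1 of \cite{resco_small_stafford82}, whose content is precisely your Noetherian stabilization argument for $(2)\Rightarrow(1)$, and its one-line deduction of $(1)\Rightarrow(3)$ from an annihilator $a\in\mcalr^n$ rests on exactly the propagation claim you spell out. You merely make explicit (the derivation step $\dd(c_i{\dd}^i\phi)=(\dd c_i){\dd}^i\phi+c_i{\dd}^{i+1}\phi$ with $\dd\calm\subset\calm$, and the stabilizing chain $M_0\subseteq M_1\subseteq\cdots$) what the paper leaves to citation.
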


\begin{proof}
The equivalence of (1) and (2) is easy (see Lemma 2.1 in \cite{resco_small_stafford82}) and it is clear that (3) implies (2).

Assume (1).
Then $a \phi=0$ for some $a\in\mcalr^n$.
Therefore, $\phi,\dd \phi,\ldots,{\dd}^{n-1}\phi$ generate $\calr \phi$ over $\calm$.
\end{proof}

\begin{thm}
\label{thm201804191022}
\begin{enumerate}
\item
$S_k+S_k\subset S_k$.
\item
$S_k S_l\subset S_{k+l}$.
\item
$R_k S_l\subset S_{k+l}$.
\item
$\calm_k\subset S_k$.
\end{enumerate}
Therefore, $\bigoplus_{k\in\rr}S_k\subset H$ is a $\cc$-subalgebra and a left $\calr$-submodule.
\end{thm}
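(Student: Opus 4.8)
The plan is to establish the four inclusions and then observe that they immediately give the algebra and module structure. The key technical tool is Lemma~\ref{lem201804181457}, which says $\mcalr$ is a left Ore set in $\calr$, together with the characterization of $S_k$ via finite generation of $\calr\phi$ over $\calm$ in Lemma~\ref{lem201804240917}. The underlying principle throughout is that $\phi\in S_k$ precisely when the $\calm$-module $\calr\phi$ is finitely generated, so each inclusion reduces to a finite-generation statement.

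\textbf{Parts (3) and (4) first.} For (4), if $g\in\calm_k$ then $\calr g\subset\calm$, and since $\calm=\cc[E_4,E_6]$ is Noetherian (indeed $\calr g$ is a submodule of the finitely generated $\calm$-module $\calm$), $\calr g$ is finitely generated over $\calm$; by Lemma~\ref{lem201804240917} this gives $(g,k)\in S_k$. Part (3) is nearly immediate from the definition: if $(\phi,l)\in S_l$ with $a\phi=0$ for some $a\in\mcalr$, and $r\in\calr_k$, I want $r\phi\in S_{k+l}$. Here I would invoke Lemma~\ref{lem201804181457}: since $a$ is monic and $r\in\calr$, there exist $a'\in\mcalr$ and $b'\in\calr$ with $a'r=b'a$. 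Then $a'(r\phi)=b'(a\phi)=b'\cdot 0=0$, so $r\phi$ is annihilated by the monic operator $a'$, hence $r\phi\in S_{k+l}$.

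\textbf{Part (1), additive closure.} Given $(\phi,k),(\psi,k)\in S_k$, annihilated by monic $a,b\in\mcalr$ respectively, I would again use the left Ore property to find monic $a'$ and some $b'$ with $a'a=b'b=:c$, where $c$ is monic by Proposition~\ref{prop201801051831} (the top of a product is the product of tops, and $a',a$ monic forces $c$ monic). Then $c\phi=a'a\phi=0$ and $c\psi=b'b\psi=0$, so $c(\phi+\psi)=0$ and $\phi+\psi\in S_k$.

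\textbf{Part (2), multiplicative closure, is the main obstacle.} Suppose $(\phi,k),(\psi,l)\in S_k,S_l$; I must show the product $(\phi\psi,k+l)\in S_{k+l}$. The difficulty is that $\dd$ is only a derivation, not $\calm$-linear, so powers $\dd^i(\phi\psi)$ expand via the Leibniz rule~(\ref{eq201801200956}) into $\sum_j\binom{i}{j}(\dd^j\phi)(\dd^{i-j}\psi)$. I would argue via Lemma~\ref{lem201804240917}(2): by hypothesis $\calr\phi$ is generated over $\calm$ by $\phi,\dd\phi,\dots,\dd^{m}\phi$ and $\calr\psi$ by $\psi,\dd\psi,\dots,\dd^{n}\psi$. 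Every $\dd^i(\phi\psi)$ is then an $\calm$-linear combination of the finitely many products $(\dd^a\phi)(\dd^b\psi)$ with $a\le m$, $b\le n$, because once a derivative index exceeds $m$ (resp.\ $n$) it can be rewritten over $\calm$ in terms of lower ones. Hence $\calr(\phi\psi)$ is contained in the finitely generated $\calm$-module spanned by these products, and since $\calm$ is Noetherian, $\calr(\phi\psi)$ is itself finitely generated over $\calm$; by Lemma~\ref{lem201804240917} this yields $\phi\psi\in S_{k+l}$. The care needed here is exactly the bookkeeping ensuring the Leibniz expansion stays inside the finite generating set, which is where Noetherianity of $\calm$ does the real work. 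Finally, (1) gives that $\bigoplus_k S_k$ is closed under addition, (2) that it is closed under multiplication and hence a $\cc$-subalgebra, and (3) that it is a left $\calr$-submodule, completing the proof.
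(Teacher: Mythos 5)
Your proof is correct, and it follows a route that is partly the paper's own and partly the alternative the paper itself acknowledges. For part (2) --- the only genuinely delicate inclusion --- you argue exactly as the paper does: expand ${\dd}^p(\phi\psi)$ by the Leibniz rule, observe that $\calr(\phi\psi)\subset\sum_{i\le m,\,j\le n}\calm({\dd}^i\phi){\dd}^j\psi$, and conclude by Noetherianity of $\calm$ together with the finite-generation criterion of Lemma~\ref{lem201804240917}. Where you diverge is in (1) and (3): the paper's primary argument stays module-theoretic ($\calr(\phi+\psi)\subset\calr\phi+\calr\psi$ and $\calr(a\psi)\subset\calr\psi$, then Noetherianity again), whereas you construct explicit monic annihilators via the left Ore property of Lemma~\ref{lem201804181457}, using $\topp(ab)=\topp(a)\topp(b)$ (Proposition~\ref{prop201801051831}) to see that the common multiple $c=a'a=b'b$ is monic. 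This is precisely the alternative the paper flags in the closing sentence of its proof (``We can also prove (1) and (3) by utilizing Lemma~\ref{lem201804181457}''), so it is fully sanctioned; what it buys is a concrete monic annihilator of $\phi+\psi$ and of $r\phi$ (an lclm-type operator) without any appeal to Noetherianity in those parts, while the paper's uniform finite-generation argument is the one that transfers to the graded-Noetherian generalization given in the remark following Theorem~\ref{thm201804241015}. Your (4) is also a mild variant: you note directly that $\calr g\subset\calm$ is a submodule of the Noetherian $\calm$-module $\calm$ (legitimate, since the paper records that $\calm$ is a left $\calr$-submodule of $H$), whereas the paper deduces (4) from (3) together with $(1,0)\in S_0$; both are one-line arguments and there is no gap in yours.
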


\begin{proof}
Let $\phi\in S_k$ and $\psi\in S_l$.
Choose $m,n\ge 0$ such that $\phi,\dd \phi,\ldots, {\dd}^m \phi$ (resp.\ $\psi,\dd \psi,\ldots, {\dd}^n \psi$) generate $\calr \phi$ (resp.\ $\calr\psi$) over $\calm$.

(1) Assume $l=k$.
We have $\calr(\phi+\psi)\subset\calr\phi+\calr\psi$, where the latter is generated by $\phi,\dd \phi,\ldots, {\dd}^m \phi$ and $\psi,\dd \psi,\ldots, {\dd}^n \psi$.
Since $\calm$ is Noetherian, $\calr(\phi+\psi)$ is finitely generated.

(2) By the Leibniz rule, ${\dd}^p(\phi\psi)=\sum_{q=0}^p\binom{p}{q}({\dd}^q\phi){\dd}^{p-q}\psi$ is a finite sum of $({\dd}^i\phi){\dd}^j\psi$ over $\calm$, where $0\le i\le m$ and $0\le j\le n$.
Thus $\calr(\phi\psi)\subset \sum_{i=0}^{m-1}\sum_{j=0}^{n-1}\calm ({\dd}^i\phi){\dd}^j\psi$.

(3) $\calr(a \psi)\subset \calr\psi$ for $a\in R_k$.

(4) Since $(1,0)\in S_0$ and $\calm_k\subset R_k$, the result follows from (3).

We can also prove (1) and (3) by utilizing Lemma \ref{lem201804181457}.
\end{proof}

We apply Theorem \ref{thm201804191022} to the theory of monic MLDEs.
We set $Y^i=(1,-i)\in H_{-i}$.
Then $\dd Y^i=\frac{i}{12}E_2 Y^i$.
Note $E_2\in H_2$, $E_4\in H_4$ and $E_6\in H_6$.

\begin{thm}
\label{thm201804240953}
\begin{enumerate}
\item
$\Sigma_k\subset \Sigma_{k-1}$.
\item
$(\log q)\Sigma_k\subset \Sigma_{k-1}$.
\item
$E_2 \Sigma_k\subset \Sigma_{k+1}$.
\item
If $k-l\in\zz$, then $\Sigma_k+\Sigma_l\subset \Sigma_{\min\{k,l\}}$.
\end{enumerate}
\end{thm}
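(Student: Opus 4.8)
The plan is to prove Theorem \ref{thm201804240953} by reducing each assertion about $\Sigma$ (solutions of monic MLDEs of a given weight) to the algebraic facts about the graded algebra $\bigoplus_k S_k$ established in Theorem \ref{thm201804191022}, using the translation that $f\in\Sigma_k$ if and only if $(f,k)\in S_k$. The key device is the weight-shifting element $Y^i=(1,-i)\in H_{-i}$, which satisfies $\dd Y^i=\frac{i}{12}E_2 Y^i$, together with the membership facts $E_2\in S_2$ and $Y^i\in S_{-i}$ that I must first verify.

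First I would check that $Y^1=(1,-1)\in S_{-1}$ and that $E_2\in S_2$. For $Y^1$, the relation $\dd Y^1=\frac{1}{12}E_2 Y^1$ shows $\calr Y^1$ is generated over $\calm$ by $Y^1$ alone (every $\dd^n Y^1$ is an $\calm$-multiple of $Y^1$), so by Lemma \ref{lem201804240917} we get $Y^1\in S_{-1}$; the same computation shows each $Y^i\in S_{-i}$. For $E_2$, one uses the Ramanujan identity $E_2'=\frac{1}{12}(E_2^2-E_4)$, whence $\dd E_2=\frac{1}{12}(E_2^2-E_4)-\frac{2}{12}E_2^2=-\frac{1}{12}E_2^2-\frac{1}{12}E_4$, so $\calr E_2$ is generated over $\calm$ by $E_2$ and $1$, giving $E_2\in S_2$ again by Lemma \ref{lem201804240917}.

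With these in hand the four parts follow mechanically from the algebra structure. For (1): if $f\in\Sigma_k$ then $(f,k)\in S_k$, and since $Y^1=(1,-1)\in S_{-1}\subset\bigoplus S_k$ and the latter is closed under multiplication (Theorem \ref{thm201804191022}(2)), we get $(f,k)Y^1=(f,k-1)\in S_{k-1}$, i.e.\ $f\in\Sigma_{k-1}$. For (3): $E_2\in S_2$ and Theorem \ref{thm201804191022}(2) give $E_2\cdot(f,k)=(E_2 f,k+1)\in S_{k+1}$, so $E_2 f\in\Sigma_{k+1}$. For (4): when $k-l\in\zz$, say $k\le l$, iterate (1) to land both $\Sigma_k$ and $\Sigma_l$ inside $\Sigma_k=\Sigma_{\min\{k,l\}}$, then apply the additive closure Theorem \ref{thm201804191022}(1); the integrality hypothesis is exactly what lets (1) bridge the gap between weights $k$ and $l$ in integer steps.

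The one genuinely delicate part is (2), the claim $(\log q)\Sigma_k\subset\Sigma_{k-1}$, since $\log q=2\pi i z$ is not itself a weight-homogeneous element of $H$ of the type handled directly by the $S_k$-algebra. The expected approach is to observe that $\log q$ arises from differentiating $Y^p=(q^{-p/?},\ldots)$-type expressions, or more precisely from the fact that $\frac{d}{dp}q^{p}=(\log q)q^{p}$; concretely, applying $\dd$ to the product $(f,k)Y^1$ and comparing with $\frac{1}{12}E_2$-terms isolates a $(\log q)f$ contribution. The cleanest route is probably to note that if $f$ satisfies a monic MLDE of weight $k$, then $(\log q)f$ satisfies a monic MLDE of weight $k-1$ obtained by composing the weight-$(k-1)$ operator annihilating $(f,k)Y^1$ with an extra factor of $\dd$, since differentiating a logarithmic solution reintroduces the original Frobenius solution; establishing that this squared/composed operator is genuinely monic of the asserted weight is where the argument needs care. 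I therefore expect (2) to be the main obstacle, and I would handle it by an explicit operator-composition argument (left-multiplying by $\dd$, or by invoking the Frobenius structure of solutions at the regular singularity $q=0$ recalled after Proposition \ref{prop201801121402}) rather than purely formally from Theorem \ref{thm201804191022}.
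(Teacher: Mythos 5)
Your overall skeleton --- realize each inclusion as multiplicative closure in $\bigoplus_k S_k$ (Theorem \ref{thm201804191022}(2)) against weight-shifting elements $Y^i=(1,-i)$ --- is exactly the paper's, but two of your membership claims are false, both for the same reason: you treat quasimodular quantities as if they lay in $\calm=\cc[E_4,E_6]$. First, $\calr Y^1$ is \emph{not} generated over $\calm$ by $Y^1$ alone: $\dd Y^1=\frac{1}{12}E_2Y^1$ is not an $\calm$-multiple of $Y^1$, since $E_2\notin\calm$; indeed $\calm_2=\{0\}$, so single-generatedness would force $\dd Y^1=0$. The conclusion $Y^1\in S_{-1}$ survives, but you need one more derivative: $\dd^2Y=-\frac{1}{144}e_4Y$, i.e.\ $(\dd^2+\frac{1}{144}e_4)Y=0$, which is precisely the monic annihilator the paper exhibits; with that repair your (1) and (4) coincide with the paper's proof. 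Second, and fatally for your (3), the claim $E_2\in S_2$ is false: Theorem \ref{thm201804241015}(4) gives $\dwt(E_2)=2-1=1$, and the paper even remarks explicitly that $E_2$ satisfies no monic MLDE of weight $2$. Your argument that ``$\calr E_2$ is generated over $\calm$ by $E_2$ and $1$'' fails because $\dd E_2$ involves ${E_2}^2$, a depth-two quasimodular form not lying in $\calm+\calm E_2$. Note also the internal inconsistency: if $E_2$ carried weight $2$, then $(E_2,2)(f,k)$ would land in $S_{k+2}$, not $S_{k+1}$ as statement (3) requires. The correct device is the weight-one element $E_2Y=(E_2,1)$, which the paper shows lies in $S_1$ via the monic third-order annihilator $\dd^3-\frac{23}{144}e_4\dd-\frac{1}{216}e_6$; then $(f,k)(E_2,1)=(E_2f,k+1)\in S_{k+1}$ by Theorem \ref{thm201804191022}(2).

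Part (2) you never actually prove. The paper disposes of it by the same one-line device as (1): $(\log q,-1)$ is annihilated by the \emph{same} operator $\dd^2+\frac{1}{144}e_4$ --- its indicial polynomial at weight $-1$ is $\lambda^2$, a double root at $0$, so both $1$ and $\log q$ give solutions, as a direct computation with $(\log q)'=1$ and the Ramanujan identity confirms --- and then $((\log q)f,k-1)=(f,k)(\log q,-1)\in S_{k-1}$ by multiplicativity. Your proposed alternative (compose the annihilator of $(f,k)Y^1$ with an extra factor of $\dd$, or appeal to the Frobenius structure) is left as a sketch whose crucial points --- monicity of the composed operator and the weight bookkeeping --- you yourself flag as unresolved. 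So as written, (2) and (3) are genuine gaps and (1) rests on a false intermediate claim, even though the intended global strategy is the right one.
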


\begin{proof}
(1) (2) $Y$ and $(\log q)Y$ are annihilated by ${\dd}^2+\frac{1}{144}e_4$.
(3) $E_2 Y$ is annihilated by ${\dd}^3-\frac{23}{144}e_4 \dd-\frac{1}{216}e_6$.
(4) The result follows from (1).
\end{proof}

Set $\calqm=\cc[E_2,E_4,E_6]\subset H$.
Recall that a \textit{quasimodular form} is a homogeneous element of $\calqm$ and its \textit{depth} is the maximum degree in $E_2$.
For example, ${E_2}^2 E_4+3 E_2 E_6-2 {E_4}^2$ is a quasimodular form of weight $8$ and depth $2$.
We denote by $\calqm_k^{\le s}$ the set of quasimodular forms of weight $k$ and depth at most $s$.
We formally set $\calqm_k^{\le s}=0$ for $s\in\zz_{<0}$.
Note that $\calm_k=\calqm_k^{\le 0}$, $\calqm_k^{\le s}=\bigoplus_{i=0}^s\calm_{k-2i}{E_2}^i$ and $\dd(\calqm_k^{\le s})\subset \calqm_{k+2}^{\le s+1}$ for $s\in\zz$.

For $f\in\hol$, we denote by $\dwt(f)\in\zz\cup\{\infty\}$ the largest $k\in\zz$ such that $f\in \Sigma_k$.
When $f$ does not satisfy a monic MLDE of any integral weight, we set $\dwt(f)=-\infty$.

\begin{thm}
\label{thm201804241015}
\begin{enumerate}
\item
$\dwt(fg)\ge\dwt(g)+\dwt(g)$, where we formally set $\infty+(-\infty):=-\infty$.
\item
If $\dwt(f)=\dwt(g)$, then $\dwt(f+g)\ge\dwt(g)$.
\item
If $\dwt(f)>\dwt(g)$, then $\dwt(f+g)=\dwt(g)$.
\item
If $f$ is a nonzero quasimodular form of weight $k$ and depth $s$, then $\dwt(f)=k-s$.
\end{enumerate}
\end{thm}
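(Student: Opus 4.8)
The plan is to treat (1)--(3) as formal consequences of the ring/module structure of $\bigoplus_k S_k$ (Theorem~\ref{thm201804191022}) together with the descending property $\Sigma_k\subset\Sigma_{k-1}$ (Theorem~\ref{thm201804240953}(1)), and to regard (4) as the essential statement. I record at the outset that $\dwt(-g)=\dwt(g)$: applying (1) with the constant $-1\in\calm_0\subset S_0$ gives $\dwt(-g)\ge\dwt(g)$, and symmetrically the reverse. For (1), whenever $\dwt(f)+\dwt(g)>-\infty$ I fix an integer $N\le\dwt(f)+\dwt(g)$ and choose integers $a\le\dwt(f)$, $b\le\dwt(g)$ with $a+b=N$; then $(fg,N)=(f,a)(g,b)\in S_aS_b\subset S_N$ by Theorem~\ref{thm201804191022}, so $fg\in\Sigma_N$, and taking the supremum over such $N$ yields $\dwt(fg)\ge\dwt(f)+\dwt(g)$ (the case $\dwt(f)+\dwt(g)=-\infty$ being vacuous under the stated convention). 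For (2) and (3) I use that $S_m+S_m\subset S_m$: if $\dwt(f)=\dwt(g)=a$ then $f,g\in\Sigma_a$ forces $f+g\in\Sigma_a$ for each finite $a$ (and for $a=\pm\infty$ the claim is immediate), giving (2); for (3), writing $b=\dwt(g)<\dwt(f)$, the descending property puts $f\in\Sigma_m$ for every integer $m\le\dwt(f)$, so $\dwt(f+g)\ge b$ as in (2), while if we had $\dwt(f+g)>b$ we could pick an integer $m$ with $b<m\le\min\{\dwt(f),\dwt(f+g)\}$ and deduce $g=(f+g)+(-f)\in\Sigma_m$, i.e.\ $\dwt(g)\ge m>b$, a contradiction.

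For the lower bound in (4), I first note $E_2\in\Sigma_1$: since $1\in\calm_0\subset S_0$ we have $1\in\Sigma_0$, and Theorem~\ref{thm201804240953}(3) gives $E_2=E_2\cdot1\in\Sigma_1$, hence $\dwt(E_2)\ge1$ and $\dwt(E_2^i)\ge i$ by (1). Writing the quasimodular form as $f=\sum_{i=0}^s g_iE_2^i$ with $g_i\in\calm_{k-2i}$, each nonzero summand obeys $\dwt(g_iE_2^i)\ge\dwt(g_i)+\dwt(E_2^i)\ge(k-2i)+i=k-i\ge k-s$ by (1) and $\calm_{k-2i}\subset S_{k-2i}$ (Theorem~\ref{thm201804191022}); so every summand lies in $\Sigma_{k-s}$ by the descending property and closure under addition gives $f\in\Sigma_{k-s}$, i.e.\ $\dwt(f)\ge k-s$.

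The upper bound $\dwt(f)\le k-s$ is the main obstacle, and here I leave the purely algebraic framework. Suppose $f\in\Sigma_w$, so that $f$ belongs to the solution space $V$ of a monic MLDE of weight $w$; by Proposition~\ref{prop201801121402}, $V$ is invariant under the weight-$w$ action, hence $f|_wS\in V$. I compute $f|_wS(z)=z^{-w}f(-1/z)$ by substituting the $S$-transforms $E_4(-1/z)=z^4E_4$, $E_6(-1/z)=z^6E_6$ and $E_2(-1/z)=z^2E_2+c_1z$ (with a nonzero constant $c_1=12/(2\pi i)$, from Eq.~(\ref{eq201807171108})) into $f=\sum_i g_iE_2^i$. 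Since $f$ is isobaric of weight $k$, this exhibits $f|_wS$ as a finite sum of terms $z^{k-w-j}\,(q\text{-series})$ with $0\le j\le s$, and the unique lowest power $z^{k-w-s}$ arises only from the $E_2^s$-part of $f$ with every factor taken from $c_1z$, with coefficient equal to a nonzero constant times $g_s$, where $g_s\in\calm_{k-2s}\setminus\{0\}$ is the leading $E_2$-coefficient of $f$ (nonzero precisely because $f$ has depth exactly $s$). Now $z^{k-w-s}$ is a constant multiple of $(\log q)^{k-w-s}$, whereas the Frobenius method at the regular singular point $q=0$ presents every element of $V$ as a finite sum of terms $q^\lambda(\log q)^m u(q)$ with $m\in\zz_{\ge0}$. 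As the powers of $\log q$ are linearly independent over the field of $q$-series, matching the coefficient of $(\log q)^{k-w-s}$ in $f|_wS\in V$ forces $g_s=0$ whenever $k-w-s<0$. Since $g_s\neq0$, we must have $k-w-s\ge0$, that is $w\le k-s$; together with the lower bound this gives $\dwt(f)=k-s$.

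The crux is precisely this confrontation between the anomalous $S$-transform of the top depth part of $f$ (which, after the weight-$w$ slash, carries the forbidden power $z^{k-w-s}$) and the non-negative integrality of the $\log q$-exponents forced by the Frobenius theory. By contrast, parts (1)--(3) and the lower bound of (4) are comparatively routine applications of the algebra and module structure of $\bigoplus_k S_k$.
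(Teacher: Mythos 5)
Your proposal is correct (and you rightly read the obvious typo in (1), $\dwt(g)+\dwt(g)$, as $\dwt(f)+\dwt(g)$), but for the upper bound in (4) you take a genuinely different route from the paper. Your treatment of (1)--(3) and of the lower bound $\dwt(f)\ge k-s$ coincides with the paper's: it declares (1)--(2) straightforward, proves (3) by the same subtraction trick, and gets $f\in\Sigma_{k-s}$ from Theorem \ref{thm201804240953}~(3). For the upper bound the paper stays entirely algebraic: using the weight-shifting element $Y=(1,-1)$ it shows by induction that ${\dd}^n\bigl(Y^{s-1}{E_2}^s f'\bigr)=Y^{s-1}\bigl(\tfrac{n!}{(-12)^n}{E_2}^{n+s}f'+(\text{depth}\le n+s-1)\bigr)$, so the $E_2$-depth grows strictly with $n$, and by the algebraic independence of $E_2,E_4,E_6$ no nonzero MLDO annihilates $({E_2}^sf',\,k-s+1)$; the general depth-$s$ form is then reduced to this special case via part~(3). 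You instead argue analytically: the anomalous $S$-transform of $E_2$ forces $f|_wS$ to carry the power $z^{k-w-s}$ with coefficient a nonzero multiple of the top coefficient $g_s$, while Proposition \ref{prop201801121402} keeps $f|_wS$ inside the solution space, whose elements involve only nonnegative integer powers of $\log q$. This buys a direct treatment of general $f$ (no reduction to ${E_2}^sf'$) and a transparent monodromy-type explanation of why depth lowers the admissible weight; the cost is two analytic inputs you only assert: the logarithmic Frobenius form of solutions (the paper quotes Coddington--Levinson only in the simple-root case, though the general statement is classical) and the linear independence of powers of $\log q$ over the ring spanned by terms $q^{\lambda}u(q)$. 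The latter deserves justification but patches easily: since $k-w-s<0$, multiply your identity by the positive power $z^{w+s-k}$, so that all $\log q$-exponents become nonnegative and the coefficient of $(\log q)^0$ is ${c_1}^s g_s$, then invoke the standard uniqueness of expansions $\sum q^{\lambda}(\log q)^m u(q)$ (e.g.\ via the monodromy $z\mapsto z+1$). By contrast, the paper's argument needs nothing beyond the Ramanujan identities and stays inside $\calqm$, and it yields the slightly stronger byproduct that no nonzero MLDO at all, monic or not, annihilates $({E_2}^sf',\,k-s+1)$.
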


\begin{proof}
(1) (2) Straightforward.

(3) If $-\infty<\dwt(g)<\infty$, then $\dwt(f+g)\ge \dwt(g)$.
Since $f\in \Sigma_{\dwt(g)+1}$, $\dwt(f+g)\ge\dwt(g)+1$ implies $\dwt(g)\ge\dwt(g)+1$, which is a contradiction.
The case $\dwt(g)=-\infty$ is clear.
 
(4) Assume $f={E_2}^s f'$, where $f'\in\calm_{k'}\backslash\{0\}$ and $k'=k-2s$.
By Theorem \ref{thm201804240953} (3), $f\in \Sigma_{k-s}$.
By induction on $n\ge 0$, ${\dd}^n(Y^{s-1}{E_2}^{s}f')=Y^{s-1}(\frac{n!}{(-12)^n}{E_2}^{n+s}f'+g)$ for some $g\in\calqm_{k'+2n+2s}^{\le n+s-1}$.
Since $E_2$, $E_4$ and $E_6$ are algebraically independent and $f'\in\cc[E_4,E_6]\backslash\{0\}$, a nonzero MLDO never annihilates $Y^{s-1}{E_2}^{s}f'$.
Therefore, $\dwt(f)=k-s$.
For general $f$, express it as $f=\sum_{i=0}^s{E_2}^i f_i$ with $f_i\in\calm_{k-2i}$ and $f_s\neq 0$.
The result follows from (3).
\end{proof}

\begin{rem}
By Theorem \ref{thm201804241015}, every quasimodular form of weight $k$ and depth $s>0$ satisfies a monic MLDE of weight $k-s$, which is lower than the \textit{original} weight $k$.
This phenomenon has already been found in the study of Kaneko-Zagier equation.
See the paragraph just after the proof of Theorem 1 in \cite{kaneko_koike2003}.
See also Subsection 3.3 in \cite{kaneko_nagatomo_sakai2017}.
\end{rem}

\begin{rem}
Let us generalize Theorem \ref{thm201804191022}.
Let $G$ be an abelian group, $(R'=\bigoplus_{g\in G}R'_g,D)$ be a graded differential commutative ring and ($R=\bigoplus_{g\in G}R_g,D)\subset R'$ a graded differential subring.
Denote by $T_g$ the set of all $s\in R'_g$ satisfying $(D^n+\cdots+r_1 D+r_0)s=0$ for some $r_i\in R$.
Assume that $R$ is graded-Noetherian, that is, every graded ideal of $R$ is finitely generated.
(For graded rings and modules, see, for example, \cite{nastasescu_oystaeyen2004}.)
Then $\bigoplus_{g\in G}T_g$ is a graded differential ring containing $R$.

By Theorem \ref{thm201804241015}, $E_2$ does not satisfy a monic MLDE of weight $2$.
Consider the setting $G=\rr$, $R'=H$, $R=\calqm$ and $D=\dd$.
Since $R$ is Noetherian, $E_2$ satisfies a monic linear differential equation of weight $2$ with coefficients in $\calqm$.
Indeed, $(\srdiff{3}{2}+x E_2 \srdiff{2}{2}+(y {E_2}^2-\frac{13}{72} E_4)D_2+\frac{1}{288}(1-4x+24y) {E_2}^3+\frac{1}{288}(-3-4x+24y)E_2E_4+\frac{1}{216}(1-6x)E_6)E_2=0$ for every $x,y\in\cc$.
\end{rem}

\subsection{Non-monic case}
\label{subsct201806231430}

We set $M=\bigoplus_{k\in\rr}M_k=\bigoplus_{k\in\rr}\mer$ and $\ocalm=\{f/g\in\mer\mid f\in\calm,g\in\calm_*\backslash\{0\}\}$, where the latter is the set of \textit{meromorphic modular forms} and has a natural grading by $\wt(f/g)=\wt(f)-\wt(g)$ for $f,g\in\calm_*$ and $g\neq 0$.
It is obvious that $\ocalm\subset M$ are \textit{graded fields}, that is, they are nonzero graded commutative rings and every nonzero homogeneous element is invertible (cf.\ \cite{geel_oystaeyen1981} and \cite{nastasescu_oystaeyen2004}, for example).
Note that a graded field is not necessarily a field although a graded ring is always a ring. 

We denote by $\ocalr=\bigoplus_{k\in\zz}\ocalr_k$ the graded subring of $\End(M)$ generated by $\dd$ and $\ocalm$, and call an element of $\ocalr$ a \textit{meromorphic MLDO} or a \textit{merMLDO} for short.
Every merMLDO can be uniquely expressed as a finite sum of $f {\dd}^i$ with $f\in\ocalm$.
The order, top and monicness of a merMLDO are defined as with those of an MLDO.
We also define $\ocalr^n$, $\ocalr^n_k$ etc.\ in the same way as Eqs.\ \ref{eq201806240958} through \ref{eq201806240958_3}. 
We have $\ocalr\simeq\ocalm[\xi;1,\dd]$.

Note that all the results in \cite{ore1933} are applicable to $\ocalr$.
For example, $\ocalr$ is a left (resp.\ right) graded-PID, that is, $\ocalr$ is an integral domain and every left (resp.\ right) graded ideal of $\ocalr$ is generated by one homogeneous element.

For $a,b\in\ocalr_*\backslash\{0\}$, we denote by $\gcrd(a,b)$ (called the \textit{greatest common right divisor}) the monic $c\in\ocalr_*$ such that $c$ divides $a,b$ from the right and if $c'\in\ocalr_*$ divides $a,b$ from the right, then $c'$ divides $c$ from the right.
We also denote by $\lclm(a,b)$ (called the \textit{least common left multiple}) the monic $c\in\ocalr_*$ such that $c$ is divided by $a,b$ from the right and if $c'\in\ocalr_*$ is divided by $a,b$ from the right, then $c'$ is divided by $c$ from the right.
By Chapter I, Sections 2 and 3 in \cite{ore1933}, $\gcrd(a,b)$ and $\lclm(a,b)$ always exist uniquely, $\ord(\gcrd(a,b))+\ord(\lclm(a,b))=\ord(a)+\ord(b)$ and $a'a+b'b=\gcrd(a,b)$ for some $a',b'\in\ocalr_*$.
The proof of the following proposition is straightforward.

\begin{prop}
Let $\phi\in M_*$ and $a,b\in\ocalr_*\backslash\{0\}$.
\begin{enumerate}
\item
$\gcrd(a,b)\phi=0$ if and only if $a\phi=b\phi=0$.
\item
$\lclm(a,b)\phi=0$ if $a\phi=0$ or $b\phi=0$.
\end{enumerate}
\end{prop}

For $n\in\zz_{\ge 0}$ we set:
\begin{align}
\overs^n_k&=\{(f,k)\in M_k\mid a(f,k)=0\text{ for some }a\in\ocalr^n\},\\
\osigma^n_k&=\{f \in\mer\mid f\text{ satisfies some MLDE of weight }k\text{ and order }n\}.
\end{align}
It is clear that $f \in \osigma^n_k$ if and only if $(f,k)\in \overs^n_k$.
We have $\overs^n_k\subset \overs^{n+1}_k$ and $\osigma^n_k\subset \osigma^{n+1}_k$.

Note the following facts.
Let $A$ be a graded field and $N\neq 0$ a graded $A$-module.
If $\emptyset \neq N_1\subset N_2\subset N_*$ such that $N_1$ is linearly independent and $N_2$ generates $N$ over $A$, then $N$ has an $A$-basis $N'$ such that $N_1\subset N'\subset N_2$.
The cardinality of a basis is independent of the choice of bases since $A$ is a commutative ring.

\begin{lem}
For $\phi\in M_k$, the following conditions are equivalent.
\begin{enumerate}
\item
$\phi\in\overs^n_k$.
\item
$\phi,\dd\phi,\ldots,{\dd}^{n-1}\phi$ generate $\ocalr\phi$ over $\ocalm$.
\item
$\dim_{\ocalm}\ocalr \phi\le n$.
\end{enumerate}
\end{lem}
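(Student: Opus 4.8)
The plan is to prove the equivalence $(1)\Leftrightarrow(2)\Leftrightarrow(3)$ by exploiting the fact that $\ocalm$ is a graded field, so that the graded $\ocalm$-module $\ocalr\phi\subset M$ behaves like a vector space over a field: every generating set contains a basis, every basis has the same cardinality (stated in the excerpt just above the lemma), and ``finitely generated'' is equivalent to ``finite-dimensional.'' The engine driving all the implications is the left division of merMLDOs: since $\ocalr\simeq\ocalm[\xi;1,\dd]$ is a left graded-PID, any merMLDO can be reduced modulo a monic one, and in particular the powers ${\dd}^i\phi$ satisfy relations exactly when $\phi$ is annihilated by a suitable operator.

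First I would establish $(1)\Rightarrow(2)$. Assume $\phi\in\overs^n_k$, so there is a nonzero $a\in\ocalr^n$ with $a\phi=0$. Writing $a=a_n{\dd}^n+\cdots+a_1\dd+a_0$ with $a_i\in\ocalm$ and $a_n\neq 0$, I note that $a_n$ is a nonzero homogeneous element of the graded field $\ocalm$, hence invertible; multiplying on the left by $a_n^{-1}$ yields a \emph{monic} annihilator of order $n$. The relation $a\phi=0$ then expresses ${\dd}^n\phi$ as an $\ocalm$-linear combination of $\phi,\dd\phi,\ldots,{\dd}^{n-1}\phi$. A routine induction (applying $\dd$ and using that $\dd$ maps $\sum_{i<n}\ocalm{\dd}^i\phi$ back into the $\ocalm$-span of $\phi,\ldots,{\dd}^{n-1}\phi$, because $\dd({\dd}^i\phi)={\dd}^{i+1}\phi$ and the top term ${\dd}^n\phi$ is reducible, while $\dd$ acts on the coefficients in $\ocalm$ as a derivation) shows every ${\dd}^m\phi$ with $m\ge n$ lies in $\sum_{i=0}^{n-1}\ocalm{\dd}^i\phi$. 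Hence $\phi,\dd\phi,\ldots,{\dd}^{n-1}\phi$ generate $\ocalr\phi=\sum_{i\ge0}\ocalm{\dd}^i\phi$ over $\ocalm$, which is $(2)$.

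Next, $(2)\Rightarrow(3)$ is immediate: if $n$ elements generate $\ocalr\phi$ over the graded field $\ocalm$, then by the cited basis-extraction fact $\ocalr\phi$ admits a basis contained in this generating set, so $\dim_{\ocalm}\ocalr\phi\le n$. For $(3)\Rightarrow(1)$, suppose $\dim_{\ocalm}\ocalr\phi\le n$. Then the $n+1$ homogeneous elements $\phi,\dd\phi,\ldots,{\dd}^n\phi$ (all lying in $\ocalr\phi$) must be $\ocalm$-linearly dependent, so there exist $c_0,\ldots,c_n\in\ocalm$, not all zero, with $\sum_{i=0}^n c_i{\dd}^i\phi=0$. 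Setting $a=\sum_{i=0}^n c_i{\dd}^i\in\ocalr$, this is a nonzero operator of order at most $n$ with $a\phi=0$. A small point to handle cleanly is homogeneity: since $\phi\in M_k$ is homogeneous, I can arrange the relation to be homogeneous by extracting the weight-$(k+2n)$ component, ensuring $a\in\ocalr^{\le n}$, and thus $\phi\in\overs^n_k$ as defined.

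The main subtlety, and the step I would treat most carefully, is $(1)\Rightarrow(2)$: one must verify that the submodule $\sum_{i=0}^{n-1}\ocalm{\dd}^i\phi$ is genuinely $\dd$-stable and absorbs all higher derivatives, which relies on $\dd$ being a derivation of $M$ compatible with the $\ocalm$-action (so that $\dd(f{\dd}^i\phi)=(\dd f){\dd}^i\phi+f{\dd}^{i+1}\phi$ with $\dd f\in\ocalm$). Everything else is formal graded-linear algebra over the graded field $\ocalm$, essentially identical in spirit to the monic-case Lemma~\ref{lem201804240917}, but I would make explicit that the invertibility of nonzero homogeneous elements of $\ocalm$ is precisely what lets us pass between ``annihilated by some order-$n$ operator'' and ``dimension at most $n$,'' a luxury unavailable in the polynomial ring $\calm$.
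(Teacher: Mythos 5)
Your overall route coincides with the paper's: the paper likewise disposes of $(1)\Leftrightarrow(2)\Rightarrow(3)$ as straightforward graded linear algebra over the graded field $\ocalm$ and proves $(3)\Rightarrow(1)$ exactly by your dependence argument, concluding $\phi\in\overs^m_k$ for some $m\le n$ and then $\phi\in\overs^n_k$ via the inclusion $\overs^m_k\subset\overs^{m+1}_k$ recorded just before the lemma. That inclusion (or left-multiplying your annihilator by ${\dd}^{n-m}$, which has order exactly $n$ since order is additive) also repairs your slightly loose ``$a\in\ocalr^{\le n}$, and thus $\phi\in\overs^n_k$ as defined'': the definition of $\overs^n_k$ demands an annihilator of order exactly $n$, not at most $n$.

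There is, however, one step in your $(1)\Rightarrow(2)$ that fails as literally written. The definition of $\overs^n_k$ only supplies some $a\in\ocalr^n$ with $a\phi=0$, and such an $a$ need not be homogeneous; its top coefficient $a_n$ is then a nonzero but possibly inhomogeneous element of $\ocalm$, which you cannot invert. The paper explicitly warns that a graded field need not be a field, and indeed $1+E_4\in\ocalm$ has no inverse of the form $f/g$ with $g\in\calm_*\backslash\{0\}$: comparing lowest and highest weight components of $f(1+E_4)=g$ forces $f=0$. The fix is exactly the weight-component extraction you already deploy in $(3)\Rightarrow(1)$, applied first: since $\phi\in M_k$ is homogeneous and ${\dd}^i\phi\in M_{k+2i}$, decomposing $a$ into its homogeneous components in the graded ring $\ocalr$ shows each component annihilates $\phi$, and choosing a weight in which the top coefficient contributes a nonzero homogeneous part yields a homogeneous annihilator of order exactly $n$ whose top \emph{is} invertible. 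After that normalization your induction showing ${\dd}^m\phi\in\sum_{i=0}^{n-1}\ocalm\,{\dd}^i\phi$ for all $m\ge n$ goes through verbatim, and the remaining implications ($(2)\Rightarrow(3)$ by basis extraction over the graded field, $(3)\Rightarrow(1)$ by dependence of $\phi,{\dd}\phi,\ldots,{\dd}^n\phi$) are correct and match the paper's proof.
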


\begin{proof}
It is straightforward to prove $(1)\Leftrightarrow (2)\Rightarrow (3)$.
Assume (3).
Then $\phi,\ldots,\allowbreak{\dd}^{n}\phi$ are linearly dependent over $\ocalm$.
Therefore, $\phi\in\overs^m_k$ for some $m\le n$, so $\phi\in\overs^n_k$ and (1) holds.
\end{proof}

\begin{thm}
\label{thm20180624}
\begin{enumerate}
\item
$\overs^n_k+\overs^m_k\subset \overs^{n+m}_k$.
\item
$\overs^n_k\overs^m_l\subset \overs^{nm}_{k+l}$.
\item
$\ocalr_k \overs_l^m\subset \overs_{k+l}^m$.
\item
$\ocalm_k\subset \overs_k^1$.
\end{enumerate}
\end{thm}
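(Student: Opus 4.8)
The plan is to run the same argument as in Theorem \ref{thm201804191022}, but with the Noetherian finite-generation step replaced by a dimension count over the graded field $\ocalm$. The engine is the preceding lemma, which says that for $\phi\in M_k$ one has $\phi\in\overs^n_k$ if and only if $\dim_{\ocalm}\ocalr\phi\le n$; here dimensions of graded $\ocalm$-submodules are well defined and behave like ordinary vector-space dimensions, precisely because $\ocalm$ is a graded field. Throughout I would fix $\phi\in\overs^n_k$ and $\psi\in\overs^m_l$, so that $\phi,\dd\phi,\ldots,{\dd}^{n-1}\phi$ span $\ocalr\phi$ and $\psi,\dd\psi,\ldots,{\dd}^{m-1}\psi$ span $\ocalr\psi$ over $\ocalm$.

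For (1) I would use $\ocalr(\phi+\psi)\subset\ocalr\phi+\ocalr\psi$, whose dimension is at most $n+m$, giving $\phi+\psi\in\overs^{n+m}_k$. For (3), since $a\psi\in\ocalr\psi$ for every $a\in\ocalr_k$, we get $\ocalr(a\psi)\subset\ocalr\psi$, so $\dim_{\ocalm}\ocalr(a\psi)\le m$ and $a\psi\in\overs^m_{k+l}$. Part (4) I would deduce from (3): since $D_0 1=0$, the order-one operator $\dd\in\ocalr^1$ annihilates $(1,0)$, so $(1,0)\in\overs^1_0$; as each $f\in\ocalm_k$ acts as a multiplication operator in $\ocalr_k$ and $(f,k)=f\cdot(1,0)$, part (3) yields $(f,k)\in\ocalr_k\,\overs^1_0\subset\overs^1_k$. (Alternatively one can exhibit the order-one annihilator $\dd - D_k f/f$ directly, using that the Serre derivative preserves meromorphic modularity, i.e.\ $D_k f\in\ocalm_{k+2}$.)

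The substantive case is (2). Applying the Leibniz rule ${\dd}^p(\phi\psi)=\sum_{q=0}^p\binom{p}{q}({\dd}^q\phi)\,{\dd}^{p-q}\psi$ and then reducing each factor — every ${\dd}^a\phi$ is an $\ocalm$-combination of $\phi,\ldots,{\dd}^{n-1}\phi$ because $\phi\in\overs^n_k$, and likewise for $\psi$ — I would show that $\ocalr(\phi\psi)$ lies in the $\ocalm$-span of the $nm$ products $({\dd}^i\phi)({\dd}^j\psi)$ with $0\le i<n$ and $0\le j<m$. Hence $\dim_{\ocalm}\ocalr(\phi\psi)\le nm$ and $\phi\psi\in\overs^{nm}_{k+l}$. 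I expect the only delicate point to be this bookkeeping: one must check that reducing the two factors independently keeps all coefficients inside $\ocalm$ (they are products of meromorphic modular forms, so they are) and that no higher-order cross term escapes the claimed finite span. Once that is in place, the order bounds $n+m$, $nm$, $m$ and $1$ fall out exactly because, over the graded field $\ocalm$, passing to a submodule and to a sum of two submodules respectively decreases and subadditively controls the dimension.
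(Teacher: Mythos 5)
Your proof is correct, and for parts (2) and (3) it coincides with the paper's argument: the Leibniz-rule reduction showing $\ocalr(\phi\psi)$ lies in the $\ocalm$-span of the $nm$ products $({\dd}^i\phi){\dd}^j\psi$, and the containment $\ocalr(a\psi)\subset\ocalr\psi$, are exactly what the paper does. For (4) the paper simply exhibits the order-one annihilator via $(fD_k-(D_kf))f=0$, which is your parenthetical alternative up to monic normalization in the graded field; your main derivation of (4) from (3) using ${\dd}(1,0)=0$ is a harmless variant. The genuine difference is in (1): the paper takes homogeneous annihilators $a\in\ocalr_*^n$ and $b\in\ocalr_*^m$ of $\phi$ and $\psi$ and invokes Ore's theory, concluding $\lclm(a,b)(\phi+\psi)=0$ with $\ord(\lclm(a,b))\le\ord(a)+\ord(b)=n+m$ (via $\ord(\gcrd(a,b))+\ord(\lclm(a,b))=\ord(a)+\ord(b)$), whereas you use the containment $\ocalr(\phi+\psi)\subset\ocalr\phi+\ocalr\psi$ and a dimension count over the graded field $\ocalm$, i.e.\ the graded analogue of the Noetherian argument the paper uses for Theorem \ref{thm201804191022} (1). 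Your route is more uniform with (2) and (3) and avoids Ore theory entirely, resting only on the lemma's equivalence $\phi\in\overs^n_k\Leftrightarrow\dim_{\ocalm}\ocalr\phi\le n$; the paper's $\lclm$ route buys slightly more, namely an explicit common annihilator of the sum whose order is exactly $n+m-\ord(\gcrd(a,b))$, hence a sharper bound whenever $a$ and $b$ share a nontrivial right factor. Both versions rest on the same graded linear algebra facts the paper records just before the lemma (existence of bases between an independent set and a generating set, and well-defined basis cardinality over $\ocalm$), which you correctly identify as the point making submodule dimensions behave like vector-space dimensions.
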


\begin{proof}
Let $\phi\in\overs^n_k$ and $\psi\in\overs^m_l$.

(1) Assume $k=l$.
We have $a\phi=b\psi=0$ for some $a\in\ocalr_*^n$ and $b\in\ocalr_*^m$, so $\lclm(a,b)(\phi+\psi)=0$.
Note $\ord(\lclm(a,b))\le \ord(a)+\ord(b)=n+m$.

(2) Since $\ocalr (\phi\psi)$ is generated by $({\dd}^i\phi){\dd}^j\psi$ over $\ocalm$ with $0\le i\le n-1$ and $0\le j \le m-1$, $\dim_{\ocalm}\ocalr (\phi\psi)\le nm$.

(3) $\ocalr(a\psi)\subset \ocalr\psi$ for $a\in\ocalr_k$.

(4) $(f D_{k}-(D_k f))f=0$ for $f\in\ocalm_k$.
\end{proof}

\begin{rem}
We have $\Sigma_k\subset \hol\cap (\bigcup_{n=0}^{\infty} \osigma^n_k)$, but the equality does not hold.
Let $g=q^{l}+\cdots\in\calm_{12m}$ with $m\in\zz_{>0}$ and $l<m$ (for example, $g={E_4}^3=1+\cdots\in\calm_{12}$) and $f=\exp(g/\Delta^m)\in\hol$.
Since $(\Delta^m D_0-D_{12m}g)f=0$, $f\in \hol\cap\osigma_0^1$.
Since $f$ has an essential singularity at $q=0$, it cannot be constructed by the Frobenius method and $f\notin \Sigma_k$ for any $k\in\rr$.
(We can also show $f\notin \Sigma_k$ by $\srdiff{n}{k}f=((-m+l)^n q^{-n(m-l)}+\cdots)f$.)
Therefore, $f\in \hol\cap (\bigcup_{n=0}^{\infty} \osigma^n_0)\backslash \Sigma_0$.
It follows that $\eta^{2k} f\in\hol\cap (\bigcup_{n=0}^{\infty} \osigma^n_k)\backslash \Sigma_k$ for $k\in\rr$.
\end{rem}

\section{Operators on solution spaces of monic MLDEs}
\label{section201712301252}
We define the monic and quasimonic MLDOs (Subsection \ref{subsection201801081050}) and give a condition under which an MLDO maps the solution space of a monic MLDE to that of another (Subsection \ref{subsection201804181356}, Corollary \ref{cor201801191507}).
We then introduce a family of third order monic MLDEs and apply the results to the solution spaces (Subsections \ref{sct201801121359} and \ref{subsection201804181403}, Example \ref{ex201807171044}).

\subsection{Monic and quasimonic MLDOs}
\label{subsection201801081050}
Recall that an MLDO $a\in\calr$ is called monic when $\topp(a)\allowbreak=1$ and quasimonic when $\topp(a)(\infty)\allowbreak=1$.

Let $a\in\calr$ and $k\in\zz$.
We define the $\cc$-linear map
\begin{equation}
a[k]:\hol\to\hol,\quad a[k]f=\pi(a(f,k)),
\end{equation}
where $\pi(\sum_{n} (g_n,n))= \sum_{n}g_n$.
The map $a\mapsto a[k]$ is $\cc$-linear.

For each $a\in\calr$, there exists a unique $c$ such that $a[k]q^{\lambda}=cq^{\lambda}+O(q^{\lambda+1})$.
We denote such $c$ by $F(k,a,\lambda)$.
It is a polynomial in $\lambda$ and linear in $a$.

\begin{lem}
If $a=\sum_{i=0}^n a_i {\dd}^i$ with $a_i\in\calm$, then $a[k]=\sum_{i=0}^n a_i \srdiff{i}{k}$ and $F(k,a,\lambda)=\sum_{i=0}^n a_i(\infty)P_i(\lambda),$ where $P_0(\lambda)=1$ and $P_i(\lambda)=(\lambda-\frac{k+2i-2}{12})\cdots(\lambda-\frac{k}{12})$ for $i>0$.
\end{lem}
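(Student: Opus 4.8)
The plan is to establish the two identities separately, both by direct computation from the definitions, and then to assemble them.

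For the first identity I would begin by computing the action of $\dd^i$ on a graded element $(f,k)$. Since $\dd(g,m)=(D_m g,m+2)$ by definition, a one-line induction gives $\dd^i(f,k)=(\srdiff{i}{k}f,\,k+2i)$: each application of $\dd$ advances the weight by $2$, feeding exactly the right index into the next Ramanujan-Serre factor of $\srdiff{i}{k}=D_{k+2i-2}\cdots D_{k+2}D_k$. Next, under the identification of $\calm$ with $\calm'$ via $\iota$, each coefficient $a_i\in\calm$ acts on $H$ by multiplication by the corresponding modular form; since $\pi$ forgets the grading and is $\cc$-linear, one has $\pi\bigl(a_i\dd^i(f,k)\bigr)=a_i\srdiff{i}{k}f$ as a function. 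Summing over $i$ then yields $a[k]f=\pi(a(f,k))=\sum_{i=0}^n a_i\srdiff{i}{k}f$, which is the first claim.

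For the second identity I would compute the coefficient of $q^\lambda$ in $a[k]q^\lambda$. The key local computation is that of $D_{k'}$ on a monomial: since $(q^\mu)'=\mu q^\mu$ and $E_2=1+O(q)$, one gets $D_{k'}q^\mu=(\mu-\tfrac{k'}{12})q^\mu+O(q^{\mu+1})$, and more generally $D_{k'}$ applied to any $q$-series with leading exponent $\mu$ either preserves that exponent, multiplying the leading coefficient by $\mu-\tfrac{k'}{12}$, or raises it; crucially it never lowers it. Feeding this into $\srdiff{i}{k}=D_{k+2i-2}\cdots D_k$ and inducting on $i$, I obtain $\srdiff{i}{k}q^\lambda=c_i q^\lambda+O(q^{\lambda+1})$ with $c_0=1$ and $c_i=(\lambda-\tfrac{k+2i-2}{12})\,c_{i-1}$, so that $c_i=P_i(\lambda)$.

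Finally, writing each $a_i=a_i(\infty)+O(q)$ and multiplying, I get $a_i\srdiff{i}{k}q^\lambda=a_i(\infty)P_i(\lambda)\,q^\lambda+O(q^{\lambda+1})$; summing over $i$ and reading off the coefficient of $q^\lambda$ gives $F(k,a,\lambda)=\sum_{i=0}^n a_i(\infty)P_i(\lambda)$. The only point requiring care—and the closest thing to an obstacle—is the bookkeeping of error terms: one must check that $D_{k'}$ maps $O(q^{\mu+1})$ into $O(q^{\mu+1})$, i.e.\ that the filtration by leading exponent is preserved, so that higher-order contributions never feed back into the coefficient of $q^\lambda$ across the $i$ successive applications. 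This is exactly the ``never lowers the exponent'' observation recorded above, and it makes the induction for $c_i$ legitimate.
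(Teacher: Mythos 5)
Your proof is correct. The paper dismisses this lemma with ``The proof is easy,'' and your direct computation---the induction $\dd^i(f,k)=(\srdiff{i}{k}f,\,k+2i)$ together with $D_{k'}q^{\mu}=(\mu-\tfrac{k'}{12})q^{\mu}+O(q^{\mu+1})$ and the check that $D_{k'}$ never lowers the leading exponent---is exactly the intended argument, with the error-term bookkeeping spelled out properly.
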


\begin{proof}
The proof is easy.
\end{proof}

The equality $F(k,a,\lambda)=F(0,a,\lambda-\frac{k}{12})$ holds since $D_k q^{\lambda}=(q\frac{d}{dq}-\frac{k}{12}E_2)q^{\lambda}=(\lambda-\frac{k}{12})q^{\lambda}+O(q^{\lambda+1})$.

We set
\begin{equation}
\label{eq20180707}
Z=\{\sum_{i=0}^n a_i {\dd}^i \in\calr \mid n\ge 0,\ a_i\in\calm,\ a_i(\infty)=0\}.
\end{equation}
Note that $Z=\{a\in\calr\mid F(k,a,\lambda)\equiv0\text{ for all }k\in\zz_{\ge 0}\}=\{a\in\calr\mid F(k,a,\lambda)\equiv0\text{ for some }k\in\zz_{\ge 0}\}$.
It is straightforward to show $Z\cap\calr_l=\Delta\calr_{l-12}$ and $\Delta\calr\subset Z$.

\begin{prop}
\label{prop201801091422}
For $k,n\in\zz_{\ge0}$, the following hold.
\begin{enumerate}
\item
$\mcalr_{2n}^n=\acalr_{2n}^n$.
\item
$\mcalr\cap Z=\emptyset$ and $\acalr\cap Z=\emptyset$.
\item
$\mcalr_k^n\neq\emptyset$ if and only if $k=2n$.
\item
\label{item201801191423}
$\acalr_k^n\neq\emptyset$ if and only if $k=2n$ or $k=2n+4,2n+6,\ldots$.
\end{enumerate}
\end{prop}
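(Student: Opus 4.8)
The plan is to translate the monicness and quasimonicness conditions into conditions on the graded pieces of $\calm=\cc[E_4,E_6]$ via the top coefficient, and then read off each assertion. The starting observation is that, by Theorem \ref{thm201712301243}, a nonzero homogeneous operator $a\in\calr_k^n$ has a unique expression $a=\sum_{m=0}^n a_m\dd^m$ with $a_m\in\calm_{k-2m}$ and $a_n\neq 0$; in particular $\topp(a)=a_n\in\calm_{k-2n}$. I would also record two elementary facts about $\calm$: first, $\calm_d\neq 0$ exactly when $d$ is a non-negative even integer with $d\neq 2$ (the values $4i+6j$ omit only $2$ among non-negative evens); second, because $E_4(\infty)=E_6(\infty)=1$, every monomial $E_4^iE_6^j$ has value $1$ at infinity, so whenever $\calm_d\neq 0$ there exists $g\in\calm_d$ with $g(\infty)=1$.

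With this, (3) is immediate: if $a\in\mcalr_k^n$ then $\topp(a)=1$ lies in $\calm_{k-2n}$, which forces $k-2n=0$, and conversely $\dd^n\in\mcalr_{2n}^n$ shows the set is nonempty when $k=2n$. Part (1) follows in the same spirit. The inclusion $\mcalr_{2n}^n\subseteq\acalr_{2n}^n$ holds because $\topp(a)=1$ gives $\topp(a)(\infty)=1$; for the reverse inclusion, an element of $\acalr_{2n}^n$ has $\topp(a)\in\calm_0=\cc$, and since a constant equals its value at infinity, the quasimonic condition $\topp(a)(\infty)=1$ forces $\topp(a)=1$, i.e.\ $a\in\mcalr$.

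For (2), I would argue that $\acalr$ and $Z$ are disjoint simply because their top coefficients behave oppositely at infinity. Any $a\in\acalr$ is nonzero with $\topp(a)(\infty)=1\neq 0$, whereas any nonzero $a\in Z$ has top coefficient equal to one of the $a_i$ appearing in its (unique) expression, all of which vanish at infinity, so $\topp(a)(\infty)=0$. Hence $\acalr\cap Z=\emptyset$, and $\mcalr\cap Z=\emptyset$ follows from $\mcalr\subseteq\acalr$.

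Finally, for (4) I would reduce the nonemptiness of $\acalr_k^n$ to the existence of $g\in\calm_{k-2n}$ with $g(\infty)=1$. Given such a $g$, the operator $g\dd^n$ lies in $\calr_k^n$ with $\topp(g\dd^n)=g$ by Proposition \ref{prop201801051831}, hence $g\dd^n\in\acalr_k^n$; conversely $\topp(a)$ provides such a $g$ for any $a\in\acalr_k^n$. By the two recorded facts this is possible exactly when $\calm_{k-2n}\neq 0$, i.e.\ when $k-2n\in\{0,4,6,8,\dots\}$, which is precisely the stated condition $k=2n$ or $k\in\{2n+4,2n+6,\dots\}$. The only point requiring any care — the closest thing to an obstacle here — is the precise determination of which $\calm_d$ vanish (namely the numerical semigroup generated by $4$ and $6$, missing only $d=2$) together with the automatic nonvanishing of monomials at infinity; the remainder is routine bookkeeping with the grading and the multiplicativity of $\topp$.
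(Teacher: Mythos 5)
Your proof is correct, and since the paper dismisses this proposition with ``The proof is straightforward,'' your argument---reading off the top coefficient's weight via the basis of Theorem \ref{thm201712301243}, noting that $\calm_d\neq 0$ exactly for $d\in\{0,4,6,8,\dots\}$, and using $E_4(\infty)=E_6(\infty)=1$ to produce quasimonic tops---is precisely the intended routine verification. Nothing is missing; in particular you correctly handle the non-homogeneous elements in part (2) by appealing to the uniqueness of the expression $\sum_i a_i\dd^i$ so that every coefficient of a nonzero element of $Z$ vanishes at infinity.
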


\begin{proof}
The proof is straightforward.
\end{proof}

\begin{lem}
For $a\in\calr_l$ and $b\in\calr$, $(ba)[k]=b[k+l]a[k]$.
\end{lem}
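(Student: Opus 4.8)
The plan is to exploit the homogeneity of $a$ to recover a clean factorization of the map $\pi$. The crucial preliminary observation I would establish is that for a \emph{homogeneous} operator $a\in\calr_l$ and any $f\in\hol$, the element $a(f,k)$ lies entirely in the single graded piece $H_{k+l}$, because $a(H_k)\subset H_{k+l}$ by the definition of $\calr_l\subset\End^l(H)$. Consequently $a(f,k)$ has exactly one nonzero homogeneous component, and $\pi$ loses no information: we get the identity
\begin{equation}
a(f,k)=(a[k]f,\,k+l).
\end{equation}
This is the only place where homogeneity of $a$ is used, and it is what makes the whole argument go through.

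With this in hand, the computation is immediate. First I would unwind the definition: $(ba)[k]f=\pi\big((ba)(f,k)\big)=\pi\big(b(a(f,k))\big)$, using that the product in $\calr$ is composition of endomorphisms of $H$. Then I would substitute the boxed identity to replace $a(f,k)$ by $(a[k]f,k+l)$, obtaining
\begin{equation}
(ba)[k]f=\pi\big(b(a[k]f,\,k+l)\big)=b[k+l]\big(a[k]f\big),
\end{equation}
where the last equality is exactly the definition of the map $b[k+l]$ applied to the function $a[k]f$. Since $f\in\hol$ was arbitrary, this yields $(ba)[k]=b[k+l]\,a[k]$ as $\cc$-linear maps $\hol\to\hol$.

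There is essentially no hard step here; the only point requiring care is precisely the asymmetry in the hypotheses, and I would flag it explicitly. The operator $a$ \emph{must} be homogeneous so that $a(f,k)$ sits in a single weight $H_{k+l}$ and the shifted index $k+l$ fed into $b[\,\cdot\,]$ is unambiguous. By contrast, $b$ need not be homogeneous: the map $b[k+l]$ is already defined for arbitrary $b\in\calr$ via the projection $\pi$, and once the input $(a[k]f,k+l)$ is pinned to a definite weight, applying $b$ and then $\pi$ reproduces $b[k+l]$ verbatim. The potential obstacle one might worry about—that $\pi$ applied to $ba(f,k)$ could mix contributions from several weights—never arises because $a(f,k)$ was already concentrated in weight $k+l$ before $b$ acts.
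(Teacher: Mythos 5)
Your proof is correct and is essentially the same as the paper's one-line computation $b[k+l]a[k]f=b[k+l]\pi(a(f,k))=\pi(ba(f,k))=(ba)[k]f$; your identity $a(f,k)=(a[k]f,k+l)$ for homogeneous $a\in\calr_l$ is exactly the implicit step the paper relies on, and you have merely made it (and the necessity of $a$'s homogeneity) explicit.
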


\begin{proof}
For $f\in\hol$, $b[k+l]a[k]f=b[k+l]\pi(a(f,k))=\pi(ba(f,k))=(ba)[k]f$.
\end{proof}

\begin{lem}
For $a\in\calr_k$ and $b\in\calr$, $F(l,ba,\lambda)=F(l+k,b,\lambda)F(l,a,\lambda)$.
\end{lem}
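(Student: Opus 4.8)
The plan is to reduce everything to the preceding lemma, which (applied with the homogeneous operator $a\in\calr_k$ and evaluation weight $l$) gives $(ba)[l]=b[l+k]\,a[l]$. This converts the assertion about $F(l,ba,\lambda)$ into a statement about the composite $b[l+k]\circ a[l]$ acting on $q^\lambda$, so the whole proof becomes a matter of tracking the leading $q$-coefficient through this composition.

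Next I would unwind the definition of $F$. By definition $a[l]q^\lambda=F(l,a,\lambda)q^\lambda+O(q^{\lambda+1})$, where the error is a genuine power series in integer powers of $q$ (the width is $1$, since $a[l]$ is a polynomial in $\dd$ with coefficients in $\calm=\cc[E_4,E_6]$, whose $q$-expansions involve only integer powers). Applying the $\cc$-linear operator $b[l+k]$ and pulling out the constant $F(l,a,\lambda)$, I would write $b[l+k]a[l]q^\lambda=F(l,a,\lambda)\,b[l+k]q^\lambda+b[l+k]\bigl(O(q^{\lambda+1})\bigr)$, and then insert $b[l+k]q^\lambda=F(l+k,b,\lambda)q^\lambda+O(q^{\lambda+1})$.

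The step that needs real justification, and which I expect to be the only genuine obstacle, is showing that $b[l+k]$ maps $O(q^{\lambda+1})$ into $O(q^{\lambda+1})$, so that the two error contributions cannot corrupt the $q^\lambda$-coefficient. For this I would write $b[l+k]=\sum_i b_i\,\srdiff{i}{l+k}$ with $b_i\in\calm$, and observe that on each monomial $\srdiff{i}{l+k}q^\mu$ is again a power series supported in exponents $\ge\mu$ (this is exactly the computation $D_k q^\mu=(\mu-\tfrac{k}{12})q^\mu+O(q^{\mu+1})$, iterated), while multiplication by $b_i\in\calm$, being bounded at infinity, does not lower exponents. Hence any series supported in exponents $\ge\lambda+1$ remains so after applying $b[l+k]$.

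Combining these observations yields $(ba)[l]q^\lambda=F(l+k,b,\lambda)F(l,a,\lambda)\,q^\lambda+O(q^{\lambda+1})$, and the uniqueness of the leading coefficient in the defining relation $(ba)[l]q^\lambda=F(l,ba,\lambda)q^\lambda+O(q^{\lambda+1})$ forces $F(l,ba,\lambda)=F(l+k,b,\lambda)F(l,a,\lambda)$, as required.
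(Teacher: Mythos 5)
Your proof is correct and follows essentially the same route as the paper's one-line argument: apply the composition lemma $(ba)[l]=b[l+k]\,a[l]$ and track the leading coefficient of $q^{\lambda}$ through the composite. The only difference is that you explicitly justify that $b[l+k]$ preserves the error space $O(q^{\lambda+1})$, a step the paper leaves implicit.
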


\begin{proof}
$(ba)[l]q^{\lambda}=b[l+k]a[k]q^{\lambda}=b[l+k](F(k,a)q^{\lambda}+O(q^{\lambda+1}))=F(l+k,b)F(l,a)q^{\lambda}+O(q^{\lambda+1})$.
\end{proof}

\begin{thm}
\label{proposition_mldo_divisible_from_the_right}
Let $a\in\mcalr_{2n}^n$, $b\in\calr$ and $k\in\zz_{\ge 0}$.
Then the following conditions are equivalent:
\begin{enumerate}
\item
$\ker(a[k])\subset \ker(b[k])$,
\item
$b$ is divisible by $a$ from the right.
\end{enumerate}
\end{thm}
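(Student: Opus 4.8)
The plan is to prove the substantive implication $(1)\Rightarrow(2)$ by dividing $b$ by $a$ and forcing the remainder to vanish, while $(2)\Rightarrow(1)$ is a one-line composition computation. Throughout I use that $a\in\mcalr_{2n}^n$ forces $a\in\calr_{2n}$: by Proposition \ref{prop201801091422} a monic MLDO of order $n$ has weight exactly $2n$, which is precisely what lets the lemma $(ba)[k]=b[k+l]a[k]$ be applied with $l=2n$. For $(2)\Rightarrow(1)$: if $b=ca$ with $c\in\calr$, then $b[k]=(ca)[k]=c[k+2n]\,a[k]$, so $b[k]f=c[k+2n]\bigl(a[k]f\bigr)$ vanishes whenever $a[k]f=0$; hence $\ker(a[k])\subset\ker(b[k])$.

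For $(1)\Rightarrow(2)$ I would first invoke division with remainder. Since $a$ is monic of order $n$, Corollary \ref{cor201802231435} yields $b=ca+r$ with $c\in\calr$ and $r\in\calr^{<n}$. Using $a\in\calr_{2n}$ and the composition lemma once more, $b[k]=c[k+2n]\,a[k]+r[k]$, so the operators $b[k]$ and $r[k]$ agree on $\ker(a[k])$. Hypothesis $(1)$ then gives $\ker(a[k])\subset\ker(r[k])$, and the entire problem reduces to the claim that an $r\in\calr$ with $\ord(r)<n$ that annihilates the solution space of $a[k]f=0$ must be zero; granting $r=0$ we get $b=ca$, which is exactly $(2)$.

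The crux is this vanishing claim, which I would settle by a Wronskian/dimension count. The equation $a[k]f=0$ is a monic linear differential equation of order $n$ with coefficients holomorphic on $\calh$, since the top term $\srdiff{n}{k}$ contributes leading coefficient $1$ and all lower coefficients lie in $\calm$; hence on the simply connected domain $\calh$ its solution space $\ker(a[k])$ is exactly $n$-dimensional over $\cc$. Fix a basis $f_1,\dots,f_n$. Writing $r=\sum_{i=0}^{n-1}r_i\dd^i$ with $r_i\in\calm$ and $r_i=0$ for $i>\ord(r)$, the inclusion $\ker(a[k])\subset\ker(r[k])$ says precisely that the row vector $(r_0(z),\dots,r_{n-1}(z))$ annihilates every column of the Wronskian matrix $M(z)=\bigl((\srdiff{i}{k}f_j)(z)\bigr)_{0\le i\le n-1,\,1\le j\le n}$ for all $z\in\calh$. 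Since $\det M$ is a nonzero constant multiple of the classical Wronskian of the fundamental system $f_1,\dots,f_n$ (the matrix $(\srdiff{i}{k}f_j)$ differs from the array of normalized derivatives by a unipotent lower-triangular change of rows), it is nowhere vanishing on $\calh$ by Abel's identity; thus $M(z)$ is invertible for every $z$, forcing $r_i(z)=0$ for all $i$ and all $z$, i.e.\ $r=0$.

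I expect the only genuinely delicate point to be this analytic input — that $\ker(a[k])$ has dimension exactly $n$ and admits a nowhere-degenerate Wronskian — which is where the algebraic bookkeeping of $\calr$ has to be matched against classical ODE theory on $\calh$. The reduction steps themselves (the division of Corollary \ref{cor201802231435} and the weight count $a\in\calr_{2n}$ that makes the composition lemma applicable) are routine.
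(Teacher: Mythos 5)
Your proof is correct and follows essentially the same route as the paper: division by the monic $a$ via Corollary \ref{cor201802231435}, the composition identity $b[k]=c[k+2n]a[k]+c'[k]$ (legitimate since $a\in\calr_{2n}$ is homogeneous), and the reduction to showing that a remainder of order $<n$ annihilating the $n$-dimensional space $\ker(a[k])$ must be zero. Where the paper disposes of the remainder by the terse dimension count $\dim\ker(c'[k])<n=\dim\ker(a[k])$, you argue pointwise via the everywhere-invertibility of the Wronskian matrix $\bigl((\srdiff{i}{k}f_j)(z)\bigr)$ --- a more detailed, and arguably more careful, packaging of the same fact, since it sidesteps the point (implicit in the paper) that $\topp(c')$ may vanish at some points of $\calh$, which the dimension bound would otherwise need to address via the identity theorem.
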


\begin{proof}
If $b=ca$ with some $c\in\calr$, then $b[k]=c[k+2n]a[k]$ and $\ker(a[k])\subset \ker(b[k])$.

Assume (1).
By Corollary \ref{cor201802231435}, $b=ca+c'$ for some $c\in\calr$ and $c'\in\calr^{<n}$.
Let $f\in\ker(a[k])$.
We have $a[k]f=0$ and $b[k]f=0$.
Since $b[k]=c[k+2n]a[k]+c'[k]$, we have $c'[k]f=0$ and $\ker(a[k])\subset\ker(c'[k])$.
If $c'\neq 0$, then $\dim\ker(c'[k])<n=\dim\ker(a[k])$, which is a contradiction.
Therefore, $c'=0$ and $b=ca$.
\end{proof}

\subsection{Characteristic roots of MLDOs}
\label{subsection201804181356}

For $a \in\calr$, $F(k,a,\lambda)=0$ is an algebraic equation with the variable $\lambda$.
We denote the multiset (unordered tuple) of the roots (\textit{characteristic roots}) by $\ch(k,a)$.
When $F(k,a,\lambda)\equiv 0$, we set $\ch(k,a)=\cc$.
If $\ch(0,a)=\{\lambda_1,\ldots,\lambda_n\}$, then $\ch(k,a)=\{\lambda_1+\frac{k}{12},\ldots,\lambda_n+\frac{k}{12}\}$.
Recall the set $Z$ defined in Eq.\ (\ref{eq20180707}).

\begin{lem}
Let $a,b\in\acalr$.
Then $\ch(k,a)=\ch(k,b)$ if and only if $a-b\in Z$.
\end{lem}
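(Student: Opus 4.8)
The plan is to reduce the multiset equality $\ch(k,a)=\ch(k,b)$ to an equality of the associated polynomials $F(k,a,\lambda)$ and $F(k,b,\lambda)$, and then exploit the linearity of $F$ in its second argument together with the characterization of $Z$ recalled just before the statement, namely $Z=\{c\in\calr\mid F(k,c,\lambda)\equiv 0\}$ (valid for the fixed $k$, since the ``for all $k$'' and ``for some $k$'' descriptions of $Z$ agree).

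The first step is to observe that for $a\in\acalr$ the polynomial $F(k,a,\lambda)$ is monic. Writing $a=\sum_{i=0}^n a_i{\dd}^i$ with $a_n=\topp(a)$ and $n=\ord(a)$, the preceding Lemma gives $F(k,a,\lambda)=\sum_{i=0}^n a_i(\infty)P_i(\lambda)$; since each $P_i$ has degree $i$ with leading coefficient $1$ and $a_n(\infty)=\topp(a)(\infty)=1$ by quasimonicity, $F(k,a,\lambda)$ is a monic polynomial of degree $n\ge 0$, in particular nonzero. The same holds for $b$, so neither characteristic set is all of $\cc$.

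For the backward direction, if $a-b\in Z$ then $F(k,a-b,\lambda)\equiv 0$, and the linearity of $F$ in its second argument yields $F(k,a,\lambda)=F(k,b,\lambda)$; equal polynomials have equal root multisets, so $\ch(k,a)=\ch(k,b)$. For the forward direction, suppose $\ch(k,a)=\ch(k,b)$. Since a monic polynomial is uniquely recovered from its multiset of roots — the cardinality of the multiset forces $\ord(a)=\ord(b)$, and the elementary symmetric functions of the roots then determine the remaining coefficients — the two monic polynomials $F(k,a,\lambda)$ and $F(k,b,\lambda)$ coincide. Linearity again gives $F(k,a-b,\lambda)\equiv 0$, whence $a-b\in Z$.

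I expect the only point requiring care — and the reason quasimonicity appears in the hypothesis — to be the forward implication: a multiset of roots determines a polynomial only up to a nonzero scalar, so it is precisely the normalization $\topp(a)(\infty)=\topp(b)(\infty)=1$, which makes both $F$'s monic with the same leading coefficient, that allows one to pass from ``same roots'' back to ``same polynomial''. Were $a,b$ merely assumed nonzero rather than quasimonic, the statement would fail.
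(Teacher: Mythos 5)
Your proof is correct and takes essentially the same route as the paper's: both directions reduce the multiset equality $\ch(k,a)=\ch(k,b)$ to the polynomial identity $F(k,a,\lambda)=F(k,b,\lambda)$, using linearity of $F$ in the operator argument and the characterization $Z=\{c\in\calr\mid F(k,c,\lambda)\equiv 0\}$, with quasimonicity guaranteeing that both polynomials are monic (the paper phrases this as $a,b\notin Z$ giving nonzero monic polynomials, which is exactly your leading-coefficient computation $\topp(a)(\infty)=1$ made explicit). Your closing observation about why quasimonicity is indispensable correctly identifies the one normalization the argument hinges on.
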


\begin{proof}
Assume $a-b\in Z$.
Then $F(k,a,\lambda)-F(k,b,\lambda)=F(k,a-b,\lambda)=0$ and $\ch(k,a)=\ch(k,b)$.

Assume $\ch(k,a)=\ch(k,b)$. Since $a,b\notin Z$, $F(k,a,\lambda)$ and $F(k,b,\lambda)$ are nonzero monic polynomials in $\lambda$.
Thus, $F(k,a,\lambda)=F(k,b,\lambda)$, $0=F(k,a,\lambda)-F(k,b,\lambda)=F(k,a-b,\lambda)$ and so $a-b\in Z$.
\end{proof}

\begin{lem}
\label{lem201712130947}
If $a\in\mcalr^n_{2n}$, then the sum of $\lambda\in\ch(k,a)$ is $\frac{n(k+n-1)}{12}$.
\end{lem}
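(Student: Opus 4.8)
The plan is to write $a$ in the canonical basis of Theorem \ref{thm201712301243}, read off the characteristic polynomial $F(k,a,\lambda)$ explicitly, and then extract the sum of the roots by Vieta's formula. First I would write $a=\sum_{i=0}^n a_i\dd^i$ with $a_i\in\calm$; since $a$ is monic of order $n$, we have $a_n=\topp(a)=1$. Because $\wt(a)=2n$ and $\wt(\dd^i)=2i$, each coefficient lies in $a_i\in\calm_{2(n-i)}$. In particular $a_{n-1}\in\calm_2=\{0\}$, since $\calm=\cc[E_4,E_6]$ contains no modular form of weight $2$; hence $a_{n-1}=0$. This vanishing is the crux of the whole argument.

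Next, by the preceding lemma expressing $F$, we have $F(k,a,\lambda)=\sum_{i=0}^n a_i(\infty)P_i(\lambda)$, where each $P_i$ is monic of degree $i$ in $\lambda$, and where $a_n(\infty)=1$ while $a_{n-1}(\infty)=0$. Thus $F(k,a,\lambda)$ is a monic polynomial of degree exactly $n$ in $\lambda$, and its $\lambda^{n-1}$-coefficient receives a contribution only from $P_n$: the term $P_{n-1}$ is multiplied by $a_{n-1}(\infty)=0$, and every $P_i$ with $i\le n-2$ has degree at most $n-2$. Consequently the sum of the elements of $\ch(k,a)$, being the sum of the roots of the monic polynomial $F(k,a,\lambda)$, equals the negative of this $\lambda^{n-1}$-coefficient, i.e.\ the negative of the corresponding coefficient of $P_n$.

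The remaining computation is routine. Since $P_n(\lambda)=(\lambda-\frac{k+2n-2}{12})\cdots(\lambda-\frac{k}{12})$ has roots $\frac{k+2j}{12}$ for $j=0,\ldots,n-1$, its $\lambda^{n-1}$-coefficient is $-\frac{1}{12}\sum_{j=0}^{n-1}(k+2j)=-\frac{n(k+n-1)}{12}$, so the sum of $\lambda\in\ch(k,a)$ is $\frac{n(k+n-1)}{12}$, as claimed. Alternatively one could first reduce to $k=0$ using the identity $F(k,a,\lambda)=F(0,a,\lambda-\frac{k}{12})$ and the shift $\ch(k,a)=\{\lambda_i+\frac{k}{12}\}$ recorded above, compute the sum $\frac{n(n-1)}{12}$ in that case, and add back $n\cdot\frac{k}{12}$. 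I do not expect any genuine obstacle here; the one point that must be handled with care is the claim that no lower-order coefficient of $a$ perturbs the $\lambda^{n-1}$-coefficient of $F$, and this is precisely what the emptiness of $\calm_2$ guarantees.
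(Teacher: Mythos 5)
Your proposal is correct and follows essentially the same route as the paper: write $a=\dd^n+\sum_{i\le n-1}a_i\dd^i$ with $a_i\in\calm_{2n-2i}$, observe $a_{n-1}\in\calm_2=\{0\}$, and read off the $\lambda^{n-1}$-coefficient of $F(k,a,\lambda)$, which comes solely from $P_n(\lambda)=(\lambda-\frac{k}{12})\cdots(\lambda-\frac{k+2n-2}{12})$. The computation $\sum_{j=0}^{n-1}(k+2j)=n(k+n-1)$ and the appeal to Vieta match the paper's proof exactly.
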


\begin{proof}
Set $a={\dd}^n+\sum_{i=0}^{n-1} a_i {\dd}^i$ with $a_i\in\calm_{2n-2i}$.
We have $a_{n-1}=0$ since $\calm_2=\{0\}$.
Therefore, $a[k]=\srdiff{n}{k}+a_{n-2}\srdiff{n-2}{k}+\cdots$, so that $F(k,a,\lambda)=(\lambda-\frac{k}{12})\cdots(\lambda-\frac{k+2n-2}{12})+O(\lambda^{n-2})=\lambda^n-\frac{n(k+n-1)}{12}\lambda^{n-1}+O(\lambda^{n-2})$.
\end{proof}

\begin{lem}
\label{lem201712130954}
Let $k\in\zz_{\ge0}$ and $n\in\zz_{>0}$.
Then the following hold.
\begin{enumerate}
\item
For $\lambda_1,\ldots,\lambda_n\in\cc$ and $l\in\zz$ such that $l\ge 2n+4$, there exists $a\in\acalr_{l}^n$ such that $\ch(k,a)=\{\lambda_1,\ldots,\lambda_n\}$.
\item
For $\lambda_1,\ldots,\lambda_n\in\cc$ such that $\lambda_1+\cdots+\lambda_n=\frac{n}{12}(k+n-1)$, there exists $a\in\mcalr_{2n}^n$ such that $\ch(k,a)=\{\lambda_1,\ldots,\lambda_n\}$.
\end{enumerate}
\end{lem}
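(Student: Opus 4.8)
The plan is to convert both existence statements into a single linear-algebra problem about the polynomials $P_i$. By the formula $F(k,a,\lambda)=\sum_{i=0}^n a_i(\infty)P_i(\lambda)$ (for $a=\sum_{i=0}^n a_i\dd^i$ with $a_i\in\calm$), and since each $P_i(\lambda)$ is monic of degree $i$, the family $\{P_0,\ldots,P_n\}$ is a $\cc$-basis of the space of polynomials in $\lambda$ of degree $\le n$. Hence the target polynomial $g(\lambda)=\prod_{j=1}^n(\lambda-\lambda_j)$, being monic of degree $n$, admits a unique expansion $g=\sum_{i=0}^n c_iP_i$ with $c_n=1$. Realizing $\ch(k,a)=\{\lambda_1,\ldots,\lambda_n\}$ then amounts to building $a$ of the prescribed weight and order $n$, with the correct (quasi)monic top, whose constant terms satisfy $a_i(\infty)=c_i$ for all $i$. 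The one analytic input I will use repeatedly is that for every even $w\ge 4$ the monomial $E_4^{\alpha}E_6^{\beta}$ with $4\alpha+6\beta=w$ lies in $\calm_w$ and has constant term $1$ (such $\alpha,\beta\ge 0$ exist since every integer $\ge 2$ is a nonnegative combination of $2$ and $3$).

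For (1), since $\calr_l=\{0\}$ for odd $l$ (so $\acalr_l^n\neq\emptyset$ forces $l$ even by Proposition \ref{prop201801091422}), I take $l$ even with $l\ge 2n+4$. Then $a_i\in\calm_{l-2i}$ has weight $l-2i\ge 2(n-i)+4\ge 4$ for every $i=0,\ldots,n$, so each constant term $c_i$ is realizable as $a_i(\infty)$ by a scalar multiple of a monomial in $E_4,E_6$ of weight $l-2i$. In particular $a_n(\infty)=c_n=1$, so $a:=\sum_{i=0}^n a_i\dd^i$ is quasimonic, lies in $\calr_l^n$, and satisfies $F(k,a,\lambda)=g(\lambda)$; this gives the desired $a\in\acalr_l^n$.

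For (2) the weight is forced to $2n$, whence $a_i\in\calm_{2(n-i)}$; thus $a_n\in\calm_0=\cc$ must be $1$ (monic) and $a_{n-1}\in\calm_2=\{0\}$ must vanish. The crux is to show that the hypothesis $\lambda_1+\cdots+\lambda_n=\frac{n(k+n-1)}{12}$ is exactly what makes the forced relation $a_{n-1}=0$, i.e.\ $c_{n-1}=0$, compatible. I would verify this by comparing the coefficient of $\lambda^{n-1}$: as in the proof of Lemma \ref{lem201712130947}, $P_n$ is monic with root sum $\sum_{j=0}^{n-1}\frac{k+2j}{12}=\frac{n(k+n-1)}{12}$, so $[\lambda^{n-1}]P_n=-\frac{n(k+n-1)}{12}=[\lambda^{n-1}]g$; together with $c_n=1$ this forces $c_{n-1}=0$. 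For $i\le n-2$ the weight $2(n-i)\ge 4$, so $a_i(\infty)=c_i$ is again realizable by a multiple of an $E_4,E_6$-monomial, and $a:=\dd^n+\sum_{i=0}^{n-2}a_i\dd^i\in\mcalr_{2n}^n$ satisfies $F(k,a,\lambda)=g(\lambda)$.

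The routine content is the existence of a modular form with constant term $1$ in each even weight $\ge 4$ and the bookkeeping of weights $l-2i$ and $2(n-i)$. The only genuinely delicate step is the coefficient-matching in (2): one must see that the prescribed root sum coincides precisely with the constraint $c_{n-1}=0$ dictated by $\calm_2=\{0\}$, which is what distinguishes the constrained monic case from the unconstrained quasimonic case in (1).
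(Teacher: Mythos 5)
Your proof is correct and follows essentially the same route as the paper: both expand the target polynomial $\prod_{j}(\lambda-\lambda_j)$ in the triangular basis $\{P_0,\ldots,P_n\}$ and realize the coefficients via modular forms of weight $l-2i$ (resp.\ $2n-2i$) with constant term $1$, the root-sum hypothesis in (2) being exactly what kills the coefficient of $P_{n-1}$ forced by $\calm_2=\{0\}$. Your explicit verification that $c_{n-1}=0$ via the $\lambda^{n-1}$-coefficient of $P_n$, and your remark that $l$ must be even (the paper's statement is vacuous for odd $l$ since $\calr_l=\{0\}$), merely make explicit what the paper's proof leaves implicit.
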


\begin{proof}
Set $P_0(\lambda)=1$ and $P_i(\lambda)=(\lambda-\frac{k}{12})\cdots(\lambda-\frac{k+2i-2}{12})$ for $i>0$.

(1) Choose $x_0,\ldots,x_{n-1}\in\cc$ so that the roots of $P_n(\lambda)+\sum_{i=0}^{n-1}x_i P_i(\lambda)=0$ are $\lambda=\lambda_1,\ldots,\lambda_n$.
If $N\in\zz_{\ge4}$, then $f(\infty)=1$ for some $f\in\calm_N$.
Therefore, for $0\le i\le n$, we can choose $a_i\in\calm_{l-2i}$ such that $a_i(\infty)=1$.
Set $a=a_n {\dd}^n+\sum_{i=0}^{n-1}x_i a_i {\dd}^i\in\acalr_l^n$.

(2) Since $\lambda_1+\cdots+\lambda_n=\frac{n}{12}(k+n-1)$, there exist $x_0,\ldots,x_{n-2}\in\cc$ so that the roots of $P_n(\lambda)+\sum_{i=0}^{n-2}x_i P_i(\lambda)=0$ are $\lambda=\lambda_1,\ldots,\lambda_n$.
For $0\le i\le n-2$, choose $a_i\in\calm_{2n-2i}$ such that $a_i(\infty)=1$ and set $a={\dd}^n+\sum_{i=0}^{n-2}x_i a_i {\dd}^i\in\mcalr_{2n}^n$.
\end{proof}

\begin{thm}
\label{thm201801191511}
Let $l,n,N\in\zz_{>0}$, $a\in\mcalr^n_{2n}$ and $b\in\acalr_l$.
Then the following hold.
\begin{enumerate}
\item
If $N\le n$ and there exists $c\in\mcalr_{2N}^N$ such that $cb$ is divisible by $a$ from the right, then $\ch(0,a)\cap\ch(0,b)\neq\emptyset$.
\item
If $\ch(0,a)\cap\ch(0,b)\neq\emptyset$, $N\le|\ch(0,a)\cap\ch(0,b)|$ and $l+2n-2N\le8$, then there exists $c\in\mcalr_{2n-2N+2}^{n-N+1}$ such that $cb$ is divisible by $a$ from the right.
\end{enumerate}
\end{thm}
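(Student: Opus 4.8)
The plan is to build $c$ explicitly from the least common left multiple, and to isolate the two numerical facts that the weight bound $l+2n-2N\le 8$ is there to supply. Throughout I would work in the graded-PID $\ocalr$ of meromorphic MLDOs, where $\gcrd$ and $\lclm$ are available. Set $g=\gcrd(a,b)$, $r=\ord(g)$, and let $b_1\in\ocalr$ be the right quotient $\lclm(a,b)/b$, so that $b_1 b=\lclm(a,b)$ and, by the order formula $\ord(\lclm)+\ord(\gcrd)=\ord(a)+\ord(b)$, one has $\ord(b_1)=\ord(a)-r=n-r$. Since $\lclm(a,b)$ is by definition divisible by $a$ from the right, every operator of the form $c=h\,b_1$ automatically satisfies $cb=h\,\lclm(a,b)=(\,\cdot\,)\,a$, that is, $cb$ is divisible by $a$ from the right, for \emph{any} $h$. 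Thus the whole task reduces to choosing $h$ so that $c=h\,b_1$ is a genuine (holomorphic) monic MLDO of order $n-N+1$ and weight $2n-2N+2$.

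Next I would carry out the order and weight bookkeeping. From $\ord(c)=\ord(h)+(n-r)$, reaching $\ord(c)=n-N+1$ forces $\ord(h)=r-N+1$, which requires $r\ge N-1$; this inequality is the crux, to which I return below. For the weight, a homogeneous monic MLDO of order $d$ has weight $2d$ (its top lies in $\calm_0$), so the target $\mcalr^{\,n-N+1}_{2n-2N+2}$ is exactly the monic operators of order $n-N+1$, and by Proposition \ref{prop201801091422}(1) a quasimonic operator of order $d$ and weight $2d$ is already monic. Hence it suffices to produce $c=h\,b_1$ holomorphic, quasimonic, of order $n-N+1$ and weight $2n-2N+2$. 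Tracking tops gives $\topp(c)(\infty)=\topp(h)(\infty)/\topp(b)(\infty)=1$, and the weight identity forces $\wt(h)-2\ord(h)=l-2\ord(b)$, a value in $\{0,4,6,8,\dots\}$ because $b$ is quasimonic; so a quasimonic $h$ of the required order and weight exists by Lemma \ref{lem201712130954}.

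The hardest step will be the inequality $r\ge N-1$: the $\ge N$ common \emph{characteristic} roots of $a$ and $b$ must be realized, up to a single exception, by genuine common \emph{solutions} (each such solution having its leading exponent in $\ch(0,a)\cap\ch(0,b)$), so that $\ord(\gcrd(a,b))=r$ counts at least $N-1$ of them. The easy direction is $r\le|\ch(0,a)\cap\ch(0,b)|$; the content is that here the coincidences of leading exponents cannot be spurious. I would argue exactly as in the proof of Theorem \ref{thm201712301224}: the obstruction to promoting a shared leading exponent to a shared solution is recorded by the successive modular Wronskians $W^{j}$, and the hypothesis $l+2n-2N\le 8$ forces the relevant $\eta$-normalized Wronskian forms into weights strictly below $12$, where $\cals=\{0\}$. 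In that cusp-form-free range the vanishing of leading terms propagates to honest vanishing, which pins $r\ge N-1$; the same range is what lets $g=\gcrd(a,b)$, and hence $b_1$, be taken with holomorphic coefficients, so that $c=h\,b_1$ genuinely lies in $\calr$.

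Assembling the pieces, $c=h\,b_1\in\mcalr^{\,n-N+1}_{2n-2N+2}$ and $cb=h\,\lclm(a,b)$ is divisible by $a$ from the right, as required. The only delicate points are the two consequences of $l+2n-2N\le 8$, namely the solution-count bound $r\ge N-1$ and the holomorphicity of the $\gcrd$/$\lclm$, both of which I would extract from the Wronskian analysis underlying Theorem \ref{thm201712301224} together with the sum-of-roots normalization of Lemma \ref{lem201712130947}; everything else is the order/weight bookkeeping and the existence statement of Lemma \ref{lem201712130954}.
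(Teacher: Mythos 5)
Your reduction to $c=h\,b_1$ with $b_1=\lclm(a,b)/b$ is sound bookkeeping, and you have correctly located where the hypothesis $l+2n-2N\le 8$ must enter; but the two facts you defer to a ``Wronskian analysis'' --- the bound $r=\ord(\gcrd(a,b))\ge N-1$ and the holomorphicity of $b_1$ --- are not supplied by any Wronskian argument, and together they are essentially the theorem itself. A characteristic root is a condition on the constant terms $a_i(\infty)$ of the coefficients only, so a coincidence of elements of $\ch(0,a)$ and $\ch(0,b)$ does not by itself force $a$ and $b$ to share a solution (and with roots differing by integers one even has logarithmic complications, so ``common solution with leading exponent in the intersection'' is not a safe principle); nothing in the proof of Theorem \ref{thm201712301224} converts equal indicial roots into a nontrivial $\gcrd$ in $\ocalr$. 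In fact $r\ge N-1$ is a \emph{consequence} of part (2): if $cb=da$ with $\ord(c)=n-N+1$, then $cb$ is a common left multiple, so $\ord(\lclm(a,b))\le m+n-N+1$ and hence $r\ge N-1$; assuming it is therefore close to circular. Likewise, the Euclidean algorithm in $\ocalr$ introduces denominators from $\ocalm$, and your sketch gives no reason why $\lclm(a,b)$, let alone the right quotient $b_1$, should have coefficients in $\calm$. (Conversely, once one has $c\in\calr$ and divisibility in $\ocalr$, divisibility in $\calr$ is automatic because $a$ is monic --- but you need $c\in\calr$ first.) Finally, part (1) of the statement is never addressed in your proposal.

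What the weight bound actually buys is the triviality of the set $Z$ of Eq.\ (\ref{eq20180707}) in low weight: $Z\cap\calr_k=\Delta\calr_{k-12}=\{0\}$ for $k\le 10$, i.e.\ in this range an operator whose indicial polynomial vanishes identically is zero. The paper's proof avoids $\gcrd$ and $\lclm$ altogether: writing $\ch(0,a)=\{\lambda_1,\ldots,\lambda_n\}$ and $\ch(0,b)=\{\mu_1,\ldots,\mu_m\}$ with $\lambda_i=\mu_i$ for $i\le N$, it uses Lemma \ref{lem201712130954} to build $c\in\mcalr^{n-N+1}_{2n-2N+2}$ with $\ch(l,c)=\{\lambda,\lambda_{N+1},\ldots,\lambda_n\}$ and, on the other side, an operator $d$ with $\ch(2n,d)=\{\lambda,\mu_{N+1},\ldots,\mu_m\}$, where the single free root $\lambda$ is chosen to satisfy the trace constraint of Lemma \ref{lem201712130947} (this constraint, not a solution count, is the true source of the ``$-1$'' you noticed). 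Then $cb$ and $da$ are monic of the same order and of weight $l+2n-2N+2\le 10$ with equal characteristic multisets, so $cb-da\in Z\cap\calr_{l+2n-2N+2}=\{0\}$ and $cb=da$ on the nose; part (1) falls out of the same multiset identity $\ch(l,c)\cup\ch(0,b)=\ch(2n,d)\cup\ch(0,a)$ combined with Lemma \ref{lem201712130947}. To rescue your route you would have to prove $r\ge N-1$ and $b_1\in\calr$ directly, and the only mechanism available in this weight range is precisely this $Z$-triviality --- at which point you have reproduced the paper's argument in heavier clothing.
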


\begin{proof}
Note that $\ch(\bullet,\bullet)$ is a multiset, not a set.
We assume $b\in\acalr^m_l$ for some $m\in\zz$.
By Proposition \ref{prop201801091422} (\ref{item201801191423}), $l=2m,2m+4,2m+6,\ldots$.

(1) $cb=da$ for some $d\in\calr_{l-2n+2N}^{m-n+N}$.
By comparing the tops of the both sides, $d\in\acalr_{l-2n+2N}^{m-n+N}$.
Thus, $\ch(l,c)\cup\ch(0,b)=\ch(2n,d)\cup\ch(0,a)$.
If $\ch(0,a)\cap\ch(0,b)=\emptyset$, then $\ch(0,a)\subset\ch(l,c)$.
Since $|\ch(0,a)|=n$ and $|\ch(l,c)|=N$, we have $n=N$ and $\ch(0,a)=\ch(l,c)$, which contradicts Lemma \ref{lem201712130947}.
Therefore, $\ch(0,a)\cap\ch(0,b)\neq\emptyset$.

(2) Set $\ch(0,a)=\{\lambda_1,\ldots,\lambda_n\}$ and $\ch(0,b)=\{\mu_1,\ldots,\mu_m\}$ so that $\lambda_1=\mu_1,\ldots,\lambda_N\allowbreak=\mu_N$.
Choose $\lambda$ so that $\lambda+\lambda_{N+1}+\cdots+\lambda_n=\frac{1}{12}(n-N+1)(l+n-N)$.
By Lemma \ref{lem201712130954}, there exists $c\in\mcalr_{2n-2N+2}^{n-N+1}$ such that $\ch(l,c)=\{\lambda,\lambda_{N+1},\ldots,\lambda_n\}$.

(2-1) The case $l=2m$.
$\lambda+\sum_{i=N+1}^m\mu_i=\lambda+\sum_{i=1}^m\mu_{i}-\sum_{i=1}^N\mu_{i}=\lambda+\sum_{i=1}^m\mu_{i}-\sum_{i=1}^N\lambda_{i}=\lambda+\sum_{i=1}^m\mu_{i}-\sum_{i=1}^n\lambda_{i}+\sum_{i=N+1}^n\lambda_{i}=\sum_{i=1}^m\mu_{i}-\sum_{i=1}^n\lambda_{i}+(\lambda+\sum_{i=N+1}^n\lambda_{i})=\frac{1}{12}m(m-1)-\frac{1}{12}n(n-1)+\frac{1}{12}(n-N+1)(2m+n-N)=\frac{1}{12}(m-N+1)(2n+m-N)$.
By Lemma \ref{lem201712130954}, $\ch(2n,d)=\{\lambda,\mu_{N+1},\ldots,\mu_m\}$ for some $d\in\mcalr_{2m-2N+2}^{m-N+1}$.
Since $cb,da\in\mcalr_{2m+2n-2N+2}^{m+n-N+1}$ and $\ch(0,cb)=\ch(0,da)$, we have $cb-da\in Z\cap\calr_{2m+2n-2N+2}$.
By the assumption, $2m+2n-2N+2\le10$.
Therefore, $ Z\cap\calr_{2m+2n-2N+2}=\{0\}$ and so $cb=da$.

(2-2) The case $l\ge 2m+4$.
By Lemma \ref{lem201712130954}, $\ch(2n,d)=\{\lambda,\mu_{N+1},\ldots,\mu_m\}$ for some $d\in\mcalr_{l-2N+2}^{m-N+1}$.
The rest of the proof is clear.
\end{proof}

The following corollary gives a condition under which a quasimonic MLDO maps the solution space of a monic MLDE to another solution space.

\begin{cor}
\label{cor201801191507}
Let $k\in\zz_{\ge 0}$, $l,n,N\in\zz_{>0}$, $a\in\mcalr^n_{2n}$ and $b\in\acalr_l$.
Then the following hold.
\begin{enumerate}
\item
If $N\le n$ and there exists $c\in\mcalr^N_{2N}$ such that $b[k]$ maps the kernel of $a[k]$ to the kernel of  $c[k+l]$, then $\ch(k,a)\cap\ch(k,b)\neq\emptyset$.
\item
If $\ch(k,a)\cap\ch(k,b)\neq\emptyset$, $N\le|\ch(k,a)\cap\ch(k,b)|$ and $l+2n-2N\le8$, then there exists $c\in\mcalr_{2n-2N+2}^{n-N+1}$ such that $b[k]$ maps the kernel of $a[k]$ to the kernel of  $c[k+l]$.
\end{enumerate}
\end{cor}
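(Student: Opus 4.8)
The plan is to deduce the statement directly from Theorem \ref{thm201801191511} by recasting the analytic hypothesis that ``$b[k]$ maps $\ker(a[k])$ into $\ker(c[k+l])$'' as the purely algebraic condition that ``$cb$ is divisible by $a$ from the right'', and by matching the intersection conditions on characteristic roots at weight $k$ with those at weight $0$.

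First I would translate the analytic condition. Since $b\in\calr_l$ is homogeneous of weight $l$, the composition formula $(cb)[k]=c[k+l]\,b[k]$ holds for every $c\in\calr$. Hence, for $f\in\hol$, we have $c[k+l](b[k]f)=(cb)[k]f$, so that $b[k]$ sends $\ker(a[k])$ into $\ker(c[k+l])$ precisely when $\ker(a[k])\subset\ker((cb)[k])$. Applying Theorem \ref{proposition_mldo_divisible_from_the_right} to the MLDO $cb$ in the role of the second operator (the hypothesis $a\in\mcalr^n_{2n}$ is exactly what that theorem requires), this last inclusion is equivalent to $cb$ being divisible by $a$ from the right. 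Thus, for the same $c$, the analytic hypothesis of the corollary and the divisibility hypothesis of Theorem \ref{thm201801191511} coincide.

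Next I would match the characteristic-root conditions. Both $a$ and $b$ are quasimonic ($a$ is even monic), so by Proposition \ref{prop201801091422} neither lies in $Z$, and each therefore has a genuine nonzero monic characteristic polynomial in $\lambda$; consequently the shift identity $\ch(k,\cdot)=\ch(0,\cdot)+\tfrac{k}{12}$ applies to both. Since the shift is by the same amount $\tfrac{k}{12}$, the multisets $\ch(k,a)\cap\ch(k,b)$ and $\ch(0,a)\cap\ch(0,b)$ are translates of one another; in particular they have the same cardinality and are simultaneously (non)empty. Therefore the conclusion $\ch(k,a)\cap\ch(k,b)\neq\emptyset$ of part (1) is equivalent to $\ch(0,a)\cap\ch(0,b)\neq\emptyset$, and the hypothesis $N\le|\ch(k,a)\cap\ch(k,b)|$ of part (2) is equivalent to $N\le|\ch(0,a)\cap\ch(0,b)|$.

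With these two translations in hand, part (1) follows from Theorem \ref{thm201801191511}(1) and part (2) from Theorem \ref{thm201801191511}(2), the latter producing a $c\in\mcalr^{n-N+1}_{2n-2N+2}$ with $cb$ right-divisible by $a$, which by the first translation is exactly the required operator. I do not expect a genuine obstacle here: the substance already lives in Theorem \ref{thm201801191511} and in the kernel-versus-divisibility dictionary of Theorem \ref{proposition_mldo_divisible_from_the_right}. The only point demanding care is the bookkeeping of the characteristic-root indices, namely verifying that the weight-$k$ intersection conditions in the corollary correspond, under the $\tfrac{k}{12}$-shift, to the weight-$0$ intersection conditions governing Theorem \ref{thm201801191511}.
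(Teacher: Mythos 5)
Your proposal is correct and is essentially the paper's own argument: the paper likewise observes that $b[k]\ker(a[k])\subset\ker(c[k+l])$ is equivalent to $\ker(a[k])\subset\ker((cb)[k])$ (hence, via Theorem \ref{proposition_mldo_divisible_from_the_right}, to right-divisibility of $cb$ by $a$) and then invokes Theorem \ref{thm201801191511}. The only difference is that you spell out the $\tfrac{k}{12}$-shift matching $\ch(k,\cdot)$ with $\ch(0,\cdot)$, which the paper leaves implicit; this is a harmless (and welcome) expansion, not a divergence.
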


\begin{proof}
Since $b[k]\ker(a[k])\subset\ker(c[k+l])$ is equivalent to $\ker(a[k])\subset\ker((cb)[k])$, the results follow from Theorem \ref{thm201801191511}. 
\end{proof}

\begin{rem}
Let $a\in\mcalr^n_{2n}$ and $b\in\calr_l$.
By Lemma \ref{lem201804181457}, there exist homogeneous $a'\in\calr$ and $c\in\mcalr$ such that $a'a=cb$, therefore $b[k](\ker (a[k]))\subset \ker (c[k+l])$.
\end{rem}

\subsection{Example I}
\label{sct201801121359}

We introduce a family of third order monic MLDEs $\phi_pf=0$ such that the solutions are related to the Dedekind eta function (Subsection \ref{sct201801121359}) and the theta functions of the $\rmd_n$ lattices (Subsection \ref{subsection201804181403}).
In the end of Subsection \ref{subsection201804181403}, we apply Corollary \ref{cor201801191507} to the solution space of $\phi_pf=0$ (Example \ref{ex201807171044}).

\begin{lem}
\label{lemma_eta_functions}
\begin{enumerate}
\item
$\eta(2z)|_{1/2}T=e(\frac{1}{12})\eta(2z)$ and $\eta(2z)|_{1/2}S=\frac{1}{\sqrt{2}}e(-\frac{1}{8})\eta(\frac{z}{2})$.
\item
$\eta(\frac{z}{2})|_{1/2}T=\eta(\frac{z+1}{2})$ and $\eta(\frac{z}{2})|_{1/2}S=\sqrt{2}e(-\frac{1}{8})\eta(2z)$.
\item
$\eta(\frac{z+1}{2})|_{1/2}T=e(\frac{1}{24})\eta(\frac{z}{2})$ and $\eta(\frac{z+1}{2})|_{1/2}S=e(-\frac{1}{8})\eta(\frac{z+1}{2})$.
\item
$\eta(2z)\eta(\frac{z}{2})\eta(\frac{z+1}{2})=e(\frac{1}{48})\eta(z)^3$.
\item
$16\eta(2z)^8+\eta(\frac{z}{2})^8-e(-\frac{1}{6})\eta(\frac{z+1}{2})^8=0$.
\end{enumerate}
\end{lem}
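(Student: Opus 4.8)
The plan is to reduce all five identities to two basic facts about the Dedekind eta function: the translation law $\eta(z+1)=e(\tfrac{1}{24})\eta(z)$ and the inversion law $\eta(-1/z)=\sqrt{-iz}\,\eta(z)$, where the square root is the principal branch (legitimate because $-iz=\Im(z)-i\Re(z)$ lies in the right half-plane for $z\in\calh$). The single computation that governs every half-integral phase is the branch identity $z^{-1/2}(-icz)^{1/2}=\sqrt{c}\,e(-\tfrac{1}{8})$ for $c>0$: writing $z=re^{i\theta}$ with $\theta\in(0,\pi)$ one has $-icz=cr\,e^{i(\theta-\pi/2)}$ with $\theta-\pi/2\in(-\tfrac\pi2,\tfrac\pi2)$, so $(-icz)^{1/2}=\sqrt{cr}\,e^{i\theta/2}e^{-i\pi/4}$, and the two half-powers of $z$ cancel to leave $\sqrt{c}\,e^{-i\pi/4}=\sqrt{c}\,e(-\tfrac18)$. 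The cases $c=\tfrac12,1,2$ supply all the constants $\tfrac{1}{\sqrt2}e(-\tfrac18)$, $e(-\tfrac18)$, $\sqrt2\,e(-\tfrac18)$ appearing below.

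First I would treat the transformation laws (1)--(3). Since $j(T,z)=1$ and $j(S,z)=z$, we have $(f|_{1/2}T)(z)=f(z+1)$ and $(f|_{1/2}S)(z)=z^{-1/2}f(-1/z)$. The $T$-parts are immediate from the translation law after the elementary bookkeeping $2(z+1)=2z+2$, $\tfrac{z+1}{2}=\tfrac z2+\tfrac12$, $\tfrac{z+2}{2}=\tfrac z2+1$; for instance $\eta(2z)|_{1/2}T=\eta(2z+2)=e(\tfrac{1}{12})\eta(2z)$ and $\eta(\tfrac{z+1}{2})|_{1/2}T=\eta(\tfrac z2+1)=e(\tfrac{1}{24})\eta(\tfrac z2)$. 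For the $S$-parts of (1) and (2) I rewrite $\eta(2(-1/z))=\eta(-1/(z/2))$ and $\eta((-1/z)/2)=\eta(-1/(2z))$, apply inversion, and read off the prefactors $z^{-1/2}(-iz/2)^{1/2}$ and $z^{-1/2}(-2iz)^{1/2}$ from the branch identity with $c=\tfrac12$ and $c=2$.

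Next I would prove the product identity (4) directly from $\eta(z)=q^{1/24}\prod_{n\ge1}(1-q^n)$. Passing to $p=q^{1/2}$, the three factors contribute $\prod(1-p^{4n})$, $\prod(1-p^n)$ and $\prod(1-(-1)^np^n)$; splitting each product over even and odd indices and using $\prod_{n\text{ odd}}(1-p^n)(1+p^n)=\prod_{n\text{ odd}}(1-p^{2n})$ collapses the left side to $\prod(1-p^{2n})^3=\prod(1-q^n)^3$, while the leading monomials pin the constant to $e(\tfrac{1}{48})$. With (4) available the $S$-part of (3) follows cleanly: abbreviating $\alpha=\eta(2z)$, $\beta=\eta(\tfrac z2)$, $\gamma=\eta(\tfrac{z+1}{2})$, inversion gives $\alpha(-1/z)=\sqrt{-iz/2}\,\beta$, $\beta(-1/z)=\sqrt{-2iz}\,\alpha$ and $\eta(-1/z)^3=(-iz)^{3/2}\eta^3$, and since $\sqrt{-iz/2}\sqrt{-2iz}=-iz$, substituting $\gamma=e(\tfrac{1}{48})\eta^3/(\alpha\beta)$ into $\gamma(-1/z)$ yields $\gamma(-1/z)=(-iz)^{1/2}\gamma$, whence $\gamma|_{1/2}S=z^{-1/2}(-iz)^{1/2}\gamma=e(-\tfrac18)\gamma$. (Alternatively one computes the eta multiplier of $\bigl(\begin{smallmatrix}1&-1\\2&-1\end{smallmatrix}\bigr)$ from its Dedekind sum, but routing through (4) avoids this.)

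The genuinely substantive point is (5), and this is where I expect the real difficulty. Dividing by $\eta(z)^8$ turns the claim into the Weber relation $\mathfrak f^8=\mathfrak f_1^8+\mathfrak f_2^8$ for $\mathfrak f=e(-\tfrac{1}{48})\eta(\tfrac{z+1}{2})/\eta$, $\mathfrak f_1=\eta(\tfrac z2)/\eta$, $\mathfrak f_2=\sqrt2\,\eta(2z)/\eta$, the factors $16$ and $e(-\tfrac16)$ arising as $(\sqrt2)^8=16$ and $e(-\tfrac{1}{48})^8=e(-\tfrac16)$. I would prove this by expressing the eighth powers of the Weber functions as $\theta_j^4/\eta^4$ in the Jacobi theta constants (the standard normalizations $\mathfrak f^2,\mathfrak f_1^2,\mathfrak f_2^2\sim\theta_3/\eta,\theta_4/\eta,\theta_2/\eta$), reducing (5) to Jacobi's identity $\theta_3^4=\theta_2^4+\theta_4^4$. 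The main obstacle is precisely this: unlike (1)--(4), the relation (5) is \emph{not} a formal consequence of the transformation laws, since $S$ and $T$ merely permute and rescale the three weight-$4$ forms $\alpha^8,\beta^8,\gamma^8$, so the vanishing linear combination must be imported from Jacobi's theta identity (equivalently, verified by matching $q$-expansions of the two sides, which are weight-$4$ forms on a level-$2$ group, up to the Sturm bound; I have checked that the leading $q^{1/6}$ terms cancel and the next term reproduces $-16\eta(2z)^8$ as required).
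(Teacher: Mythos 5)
Your proof is correct, but it diverges from the paper's at two points. For the $S$-transformation in (3), the paper computes $\eta(\frac{Sz+1}{2})=\eta((\begin{smallmatrix}1&-1\\2&-1\end{smallmatrix})\frac{z+1}{2})$ and evaluates the eta multiplier $\chi((\begin{smallmatrix}1&-1\\2&-1\end{smallmatrix}))$ via Knopp's multiplier formula; you instead prove (4) first and deduce $\gamma|_{1/2}S=e(-\tfrac18)\gamma$ from $\gamma=e(\tfrac{1}{48})\eta^3/(\alpha\beta)$ together with the already-established inversion behaviour of $\alpha$, $\beta$ and $\eta^3$. Your route is more self-contained (no Dedekind sums or multiplier tables), and your branch bookkeeping --- $z^{-1/2}(-icz)^{1/2}=\sqrt{c}\,e(-\tfrac18)$ and $(-iz/2)^{1/2}(-2iz)^{1/2}=-iz$ on the right half-plane --- is carried out correctly, which is exactly where sign errors usually hide. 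For (5), the paper uses (1)--(3) to see that the left-hand side is a weight-$4$ modular form (with character) on $\sltwoz$ and then kills it with the valence formula, after checking that the $q^{1/6}$ and $q^{2/3}$ coefficients cancel; you instead divide by $\eta^8$ and reduce to Jacobi's identity $\theta_3^4=\theta_2^4+\theta_4^4$ through the Weber functions, using $\theta_2=2\eta(2z)^2/\eta(z)$, $\theta_4=\eta(\tfrac z2)^2/\eta(z)$ and $\theta_3=e(-\tfrac{1}{24})\eta(\tfrac{z+1}{2})^2/\eta(z)$. Both are valid; the paper's argument buys independence from the theta-eta product identities (though the paper does invoke them later, in the proof of Proposition \ref{prop201801151033}), while yours makes the identity an instance of a classical relation rather than an application of finite-dimensionality. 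Your structural remark that (5) is not a formal consequence of (1)--(3) alone is fair --- some analytic input is needed --- but note that the paper's valence-formula proof shows the required input is just a two-term $q$-expansion check, which is precisely your parenthetical alternative; if you were to rely on that alternative as stated, checking only the leading cancellations is not yet a Sturm-bound verification, so either complete that check or keep Jacobi's identity as the main route, as you do.
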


\begin{proof}
(1), (2) and the first part of (3) can be proved by the relations $\eta(z+1)=e(\frac{1}{24})\eta(z)$ and $\eta(-\frac{1}{z})=\sqrt{-iz}\eta(z)$.
We prove the second part of (3).
Since $\eta(\frac{Sz+1}{2})=\eta(\frac{1}{2}-\frac{1}{2z})=\eta((\begin{smallmatrix}1&-1\\2&-1\end{smallmatrix})\frac{z+1}{2})=\chi((\begin{smallmatrix}1&-1\\2&-1\end{smallmatrix}))\sqrt{z}\eta(\frac{z+1}{2})=e(-\frac{1}{8})\sqrt{z}\eta(\frac{z+1}{2})$, we have $\eta(\frac{z+1}{2})|_{1/2}S=\frac{1}{\sqrt{z}}\eta(\frac{Sz+1}{2})=e(-\frac{1}{8})\eta(\frac{z+1}{2})$.
In order to calculate $\chi((\begin{smallmatrix}1&-1\\2&-1\end{smallmatrix}))$, we have used Theorem 2 in Chapter 4 in \cite{knopp1970}.

(4) The equality is proved by the definition $\eta(z)=q^{1/24}(1-q)(1-q^2)\cdots$.

(5) By (1) through (3), the left-hand side is a modular form of weight $4$ on $\sltwoz$.
The identity follows from the valence formula (cf.\ Theorem 4.1.4 in \cite{rankin1977}).
\end{proof}

Let $p\in\rr$.
Since $\eta(z)$ never vanishes on $\calh$, $\eta^p(z)$ is defined on $\calh$ and has the $q$-expansion
$
q^{p/24}(1-pq+\frac{p(p-3)}{2}q^2+\cdots).
$
The $\cc$-vector space spanned by
$\eta^p(2z)=q^{p/12}(1-pq^2+\frac{p(p-3)}{2}q^4+\cdots)$, $\eta^p(\frac{z}{2})=q^{p/48}(1-pq^{1/2}+\frac{p(p-3)}{2}q+\cdots)$ and $\eta^p(\frac{z+1}{2})=e(\frac{p}{48})q^{p/48}(1+pq^{1/2}+\frac{p(p-3)}{2}q+\cdots)$
is invariant under the weight $\frac{p}{2}$ modular transformations.
We have the following $q$-expansions
\begin{align}
\eta^p\left(\frac{z}{2}\right)+e\left(-\frac{p}{48}\right)\eta^p\left(\frac{z+1}{2}\right)&=2q^{p/48}\left(1+\frac{p(p-3)}{2}q+\cdots\right),\\
-\eta^p\left(\frac{z}{2}\right)+e\left(-\frac{p}{48}\right)\eta^p\left(\frac{z+1}{2}\right)&=2q^{p/48}\left(pq^{1/2}+\cdots\right).
\end{align}
If $p\neq 0,8$, then the leading exponents of $\eta^p(2z)$, $\pm\eta^p(\frac{z}{2})+e(-\frac{p}{48})\eta^p(\frac{z+1}{2})$ are distinct real numbers.
By Mason's theorem, they satisfy a monic MLDE of weight $\frac{p}{2}$ and order $3$.
A monic MLDE of weight $\frac{p}{2}$ and order $3$ has the form $(\srdiff{3}{p/2}+x E_4 D_{p/2}+y E_6)f=0$ with $x,y\in\cc$.
Calculating the indicial roots, we can uniquely determine $x$ and $y$ as $x=-\frac{3p^2-24p+128}{2304}$ and $y=-\frac{p^2(p-24)}{55296}$.

\begin{prop}
\label{prop201801151023}
Let $p\in\rr$ and $\phi_p=\srdiff{3}{p/2}-\frac{3p^2-24p+128}{2304}E_4 D_{p/2}-\frac{p^2(p-24)}{55296}E_6$.
Then the following hold.
\begin{enumerate}
\item
If $p\neq 0,8$, then $\eta^p(2z)$, $\eta^p(\frac{z}{2})$ and $\eta^p(\frac{z+1}{2})$ are independent solutions to $\phi_p f=0$.
\item
If $p=0$, then $1$, $\int E_2^{(2)}(z)dz$ and $\int \sqrt{\Delta_4^{(2)}(z)}dz$ are independent solutions, where $E_2^{(2)}(z)=2E_2(2z)-E_2(z)$ and $\Delta_4^{(2)}(z)=\frac{\eta(2z)^{16}}{\eta(z)^8}$.
\item
If $p=8$, then $\eta^8(2z)$ and $\eta^8(\frac{z}{2})$ are independent solutions.
\end{enumerate}
\end{prop}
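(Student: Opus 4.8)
For statement (1) the discussion preceding the proposition already supplies the analytic content, and my plan is to assemble it rigorously. Set $A_p=\eta^p(\tfrac z2)+e(-\tfrac p{48})\eta^p(\tfrac{z+1}2)$ and $B_p=-\eta^p(\tfrac z2)+e(-\tfrac p{48})\eta^p(\tfrac{z+1}2)$. I would first record that for $p\neq 0,8$ the triple $\eta^p(2z),A_p,B_p$ has the distinct real leading exponents $\tfrac p{12},\tfrac p{48},\tfrac p{48}+\tfrac12$, and that by Lemma \ref{lemma_eta_functions}(1)--(3) the column $(\eta^p(2z),A_p,B_p)^{\rmt}$ is a weakly holomorphic vector-valued modular form of weight $\tfrac p2$. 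Since the exponents sum to $\tfrac p8+\tfrac12$ and $12(\tfrac p8+\tfrac12)=\tfrac{3p}2+6=3(\tfrac p2+2)=n(k+n-1)$ for $n=3$, $k=\tfrac p2$, Theorem \ref{thm201712301224}(3) shows these three functions solve a monic weight-$\tfrac p2$ order-$3$ MLDE. As $\calm_2=\{0\}$, any such MLDE has the form $(\srdiff{3}{p/2}+xE_4D_{p/2}+yE_6)f=0$; equating its indicial polynomial with $\prod_i(\lambda-\lambda_i)$ pins down $x,y$ to the displayed values, so the operator is exactly $\phi_p$. Because the passage from $\{\eta^p(2z),A_p,B_p\}$ to $\{\eta^p(2z),\eta^p(\tfrac z2),\eta^p(\tfrac{z+1}2)\}$ is an invertible change of basis, the latter three also span the three-dimensional solution space, which is (1).

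For (2) and (3) the leading exponents collide ($\tfrac p{12}=\tfrac p{48}$ at $p=0$, $\tfrac p{12}=\tfrac p{48}+\tfrac12$ at $p=8$), so Theorem \ref{thm201712301224} no longer applies. Here I would argue by analytic continuation in the parameter: for each fixed $z\in\calh$ the coefficients of $\phi_p$ are polynomials in $p$, and $\eta^p(2z)$, $\eta^p(\tfrac z2)$, $\eta^p(\tfrac{z+1}2)$ together with all their $z$-derivatives are entire in $p$; hence $p\mapsto\phi_p\eta^p(2z)$ and its two analogues are entire in $p$ and vanish on $\rr\setminus\{0,8\}$, so they vanish identically. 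At $p=8$ this shows all three eighth powers solve $\phi_8f=0$; Lemma \ref{lemma_eta_functions}(5) gives the relation $16\eta^8(2z)+\eta^8(\tfrac z2)-e(-\tfrac16)\eta^8(\tfrac{z+1}2)=0$, so they span only a plane, while $\eta^8(2z)$ and $\eta^8(\tfrac z2)$ have the distinct leading exponents $\tfrac23$ and $\tfrac16$ and are therefore independent, proving (3).

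At $p=0$ all three powers collapse to the constant $1$, so I would instead use the factorization $\phi_0={\dd}^3-\tfrac1{18}e_4\dd=({\dd}^2-\tfrac1{18}e_4)\dd$, that is, $\phi_0f=0$ if and only if $g:=D_0f=f'$ solves the monic weight-$2$ order-$2$ MLDE $\psi g=0$ with $\psi=\srdiff{2}{2}-\tfrac1{18}E_4$. The indicial polynomial of $\psi$ is $(\lambda-\tfrac16)(\lambda-\tfrac13)-\tfrac1{18}=\lambda(\lambda-\tfrac12)$, with roots $0$ and $\tfrac12$, which match the leading exponents of $E_2^{(2)}=1+O(q)$ and of $\sqrt{\Delta_4^{(2)}}=\eta(2z)^8/\eta(z)^4=q^{1/2}+O(q^{3/2})$. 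I would then verify directly that both lie in $\ker\psi$, using the Ramanujan identities and the substitution $z\mapsto 2z$ for $E_2^{(2)}$, and logarithmic differentiation of the eta-quotient via $\eta'=\tfrac1{24}E_2\eta$ for $\sqrt{\Delta_4^{(2)}}$. Consequently $1$ (annihilated by $D_0$) together with the antiderivatives $\int E_2^{(2)}\,dz$ and $\int\sqrt{\Delta_4^{(2)}}\,dz$ solve $\phi_0f=0$, and their leading behaviours $1$, $\log q$ (the logarithm forced by the double indicial root $0$ of $\phi_0$) and $q^{1/2}$ are linearly independent, which is (2).

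The main obstacle is the case $p=0$: unlike (1) and (3) it reduces neither to Mason's theorem nor to bare analytic continuation, and establishing $E_2^{(2)},\sqrt{\Delta_4^{(2)}}\in\ker\psi$ requires honest, if routine, modular computations. One must also check carefully that the continuation argument legitimately transports solutionhood onto the degenerate parameter values and that the logarithmic antiderivative is the correct confluent limit of the order-$3$ solutions as $p\to0$.
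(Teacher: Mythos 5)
Your proposal is correct, and for parts (1) and (2) it follows essentially the paper's own route: (1) is precisely the discussion preceding the proposition (the combinations with distinct real leading exponents $\frac{p}{12}$, $\frac{p}{48}$, $\frac{p}{48}+\frac12$ for $p\neq0,8$, Theorem \ref{thm201712301224}(3) applied since the exponents sum to $\frac{p}{8}+\frac12=\frac{1}{12}\cdot3\bigl(\frac p2+2\bigr)$, and uniqueness of $x,y$ from the indicial polynomial because $\calm_2=\{0\}$ forces the shape $\srdiff{3}{p/2}+xE_4D_{p/2}+yE_6$), and (2) rests on the same factorization $\phi_0=(\srdiff{2}{2}-\frac1{18}E_4)\circ D_0$, the only divergence being that where you propose to verify $E_2^{(2)},\sqrt{\Delta_4^{(2)}}\in\ker(\srdiff{2}{2}-\frac1{18}E_4)$ by direct computation — which does close, since everything stays in the ring of level-2 modular forms via identities such as $6D_2E_2^{(2)}=E_4-2(E_2^{(2)})^2$ and $4(E_2^{(2)})^3-3E_4E_2^{(2)}-E_6=0$, each checkable in a two-dimensional space — the paper simply cites Theorem 1 of Kaneko--Koike. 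For (3) your route is genuinely different: the paper applies Mason's theorem a second time to the two-dimensional span furnished by Lemma \ref{lemma_eta_functions}(5) to get $(\srdiff{2}{4}-\frac1{18}E_4)f=0$, and tacitly relies on the factorization $\phi_8=D_8\circ(\srdiff{2}{4}-\frac1{18}E_4)$, equivalently $\dd\bigl({\dd}^2-\frac1{18}e_4\bigr)={\dd}^3-\frac1{18}e_4\dd+\frac1{54}e_6$, to pass back to $\phi_8$; you instead transport solutionhood by analytic continuation in $p$, which is legitimate here ($\eta^p=\exp(pL)$ for a fixed branch $L$ of $\log\eta$ on the simply connected $\calh$ is entire in $p$ together with all $z$-derivatives, the coefficients of $\phi_p$ are polynomial in $p$, and $\rr\setminus\{0,8\}$ has accumulation points), and then use Lemma \ref{lemma_eta_functions}(5) only to see that the three solutions span a plane, with independence of $\eta^8(2z)$, $\eta^8(\frac z2)$ from the exponents $\frac23\neq\frac16$. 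Your continuation argument buys uniformity — all three eta-powers become solutions at $p=8$ without a second invocation of Mason or the unstated factorization of $\phi_8$ — whereas the paper's argument is purely modular and works pointwise in the parameter; and you correctly recognize that at $p=0$ the continuation degenerates (all three powers collapse to $1$), so that the explicit factorization of $\phi_0$, with indicial polynomial $\lambda^2(\lambda-\frac12)$ explaining the constant, logarithmic, and $q^{1/2}$ solutions, is genuinely needed there, exactly as in the paper.
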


\begin{proof}
(1) Already proved.

(2) The linear independence is clear from the $q$-expansions.
According to Theorem 1 in \cite{kaneko_koike2003}, the solutions to $(\srdiff{2}{2}-\frac{1}{18}E_4)f=0$ are $E_2^{(2)}$ and $\sqrt{\Delta_4^{(2)}}$.

(3) By Lemma \ref{lemma_eta_functions} (5), the $\cc$-vector space spanned by
$\eta^8(2z)=q^{2/3}(1-8q^2+\cdots)$ and $\eta^8(\frac{z}{2})=q^{1/6}(1-8q^{1/2}+\cdots)$
is invariant under the weight $4$ modular transformations.
By Mason's theorem, they satisfy $(\srdiff{2}{4}-\frac{1}{18}E_4)f=0$.
\end{proof}

\begin{rem}
If $p\neq0,3,6$, then $\eta^p(3z)$, $\eta^p(\frac{z}{3})$, $\eta^p(\frac{z+1}{3})$ and $\eta^p(\frac{z+2}{3})$ are independent solutions to $\srdiff{4}{p/2}-\frac{p^2-6p+18}{216}E_4\srdiff{2}{p/2}-\frac{p^3-12p^2+45p-81}{5832}E_6D_{p/2}\allowbreak-\frac{p^2(p^2-36p+288)}{559872}{E_4}^2)f=0.$
\end{rem}

\subsection{Example II}
\label{subsection201804181403}

For an even lattice $L\subset\rr^n$ and a point $p\in\rr^n$, the theta function $\theta_{p+L}$ is given by $\theta_{p+L}(z)=\sum_{x\in L}q^{(x+p,x+p)/2},$ where $(\bullet,\bullet)$ is the standard inner product on $\rr^n$.
We denote by $L^*$ the dual lattice of $L$.
Then the $\cc$-vector space spanned by $\theta_{p+L}$ with $p\in L^*/L$ is invariant under the weight $\frac{n}{2}$ modular transformations (cf.\ Proposition 3.2 in \cite{ebeling2013}).

For $n\ge 3$, $\rmd_n$ are the root lattices $\{(x_1,\ldots,x_n)\in\zz^n\mid x_1+\cdots+x_n\in 2\zz\}$.
We have
$\rmd_n^*=\zz^n+\zz t$ and $\rmd_n^*/\rmd_n=\{0,s,t,s+t\}$,
where $s=(1,0,\ldots,0)$ and $t=(\frac{1}{2},\ldots,\frac{1}{2})$.

We formally set $\rmd_2=\sqrt{2}\zz\oplus\sqrt{2}\zz$ and $\rmd_1=2\zz$.

\begin{prop}
\label{prop201801151033}
For $n\ge 1$ and $p\in \rmd_n^*/\rmd_n$, $\theta_{p+\rmd_n}$ satisfies
\begin{equation}
\left(\srdiff{3}{n/2}-\frac{3n^2-12n+32}{576}E_4D_{n/2}-\frac{n^2(n-12)}{6912}E_6\right)f=0.
\end{equation}
Equivalently, it satisfies
$
\phi_{2n}(\eta^n\theta_{p+\rmd_n})=0.
$
\end{prop}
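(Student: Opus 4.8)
The two displayed assertions are equivalent, so the plan is to prove the second one, $\phi_{2n}(\eta^n\theta_{p+\rmd_n})=0$. The equivalence rests on the fact that multiplication by $\eta^n$ intertwines the Ramanujan--Serre operators in weights $n/2$ and $n$. Using $\eta'=\frac{1}{24}E_2\eta$, one computes for any $f$ of weight $k$ that
\[
D_{k+n/2}(\eta^n f)=(\eta^n f)'-\tfrac{k+n/2}{12}E_2\eta^n f=\eta^n\left(f'-\tfrac{k}{12}E_2 f\right)=\eta^n D_k f,
\]
and iterating yields $\srdiff{j}{k+n/2}(\eta^n f)=\eta^n\srdiff{j}{k}f$. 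Taking $k=n/2$, noting that $E_4,E_6$ commute with multiplication by $\eta^n$, and observing that setting $p=2n$ in $\phi_p$ gives exactly the coefficients $\frac{3n^2-12n+32}{576}$ and $\frac{n^2(n-12)}{6912}$ of the displayed operator, one obtains $\phi_{2n}(\eta^n\theta_{p+\rmd_n})=\eta^n\left(\srdiff{3}{n/2}-\frac{3n^2-12n+32}{576}E_4D_{n/2}-\frac{n^2(n-12)}{6912}E_6\right)\theta_{p+\rmd_n}$. Since $\eta$ is nowhere zero on $\calh$, the displayed MLDE holds if and only if the right-hand side vanishes, which is the claimed equivalence.

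To prove $\phi_{2n}(\eta^n\theta_{p+\rmd_n})=0$, the plan is to exhibit each $\eta^n\theta_{p+\rmd_n}$ as a $\cc$-linear combination of $\eta^{2n}(2z)$, $\eta^{2n}(\frac z2)$ and $\eta^{2n}(\frac{z+1}{2})$, which are solutions of $\phi_{2n}f=0$. First I would record the classical decomposition of the $\rmd_n$ theta functions: writing $\theta_3=\sum_{m\in\zz}q^{m^2/2}$, $\theta_4=\sum_{m\in\zz}(-1)^m q^{m^2/2}$ and $\theta_2=\sum_{m\in\zz}q^{(m+1/2)^2/2}$, one has $\theta_{\rmd_n}=\frac12(\theta_3^n+\theta_4^n)$, $\theta_{s+\rmd_n}=\frac12(\theta_3^n-\theta_4^n)$ and $\theta_{t+\rmd_n}=\theta_{s+t+\rmd_n}=\frac12\theta_2^n$, where the coincidence of the last two theta functions comes from the isometry $x_1\mapsto-x_1$ of $\rmd_n$ interchanging the cosets $t+\rmd_n$ and $s+t+\rmd_n$. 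Next I would multiply by $\eta^n$ and simplify using the standard eta-quotient identities $\eta\theta_2=2\eta(2z)^2$, $\eta\theta_4=\eta(\frac z2)^2$ and $\eta\theta_3=\eta^6/(\eta(\frac z2)^2\eta(2z)^2)$, the last combined with Lemma \ref{lemma_eta_functions}(4), to obtain
\[
\eta^n\theta_2^n=2^n\eta^{2n}(2z),\qquad \eta^n\theta_4^n=\eta^{2n}(\tfrac z2),\qquad \eta^n\theta_3^n=e(-\tfrac n{24})\,\eta^{2n}(\tfrac{z+1}{2}).
\]
Consequently every $\eta^n\theta_{p+\rmd_n}$ is an explicit combination of the three eta powers, and for $n\ne4$ Proposition \ref{prop201801151023}(1) shows these are solutions of $\phi_{2n}f=0$, finishing the argument.

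The case $n=4$ is where the argument must be adjusted, and I expect it to be the main obstacle, since Proposition \ref{prop201801151023}(1) is stated only for $p\ne8$ (two of the eta powers then share a leading exponent). Here I would instead invoke Lemma \ref{lemma_eta_functions}(5), which lets me eliminate $\eta^8(\frac{z+1}{2})$ and rewrite each $\eta^4\theta_{p+\rmd_4}$ as a combination of $\eta^8(2z)$ and $\eta^8(\frac z2)$ alone; explicitly the three cases collapse to $8\eta^8(2z)+\eta^8(\frac z2)$, $8\eta^8(2z)$ and $8\eta^8(2z)$. These two functions solve $(\srdiff{2}{4}-\frac1{18}E_4)f=0$ by Proposition \ref{prop201801151023}(3), and the commutation relation (\ref{eq201805150955}) gives $\dd e_4=e_4\dd-\frac13 e_6$, hence the factorization $\phi_8=\dd\circ(\srdiff{2}{4}-\frac1{18}e_4)$ as operators; therefore every solution of the second-order equation is a solution of $\phi_8f=0$, which completes the case $n=4$ and the proof.
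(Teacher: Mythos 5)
Your proposal is correct, and it takes exactly the route the paper acknowledges and then deliberately sets aside. The paper's own proof avoids the Conway--Sloane decomposition: instead it shows directly, by the substitution $x_1\mapsto -x_1-1$ in the defining sum, that $\theta_{t+\rmd_n}=\theta_{s+t+\rmd_n}$, concludes that the span of $\theta_{\rmd_n},\theta_{s+\rmd_n},\theta_{t+\rmd_n}$ is invariant under the weight $\frac n2$ modular transformations, reads off the leading exponents $0,\frac12,\frac n8$, and applies the generalized Mason theorem (Theorem \ref{thm201712301224}) to produce the monic third-order MLDE, whose two free coefficients are then pinned down by the indicial roots exactly as in the derivation of $\phi_p$; the exceptional case $n=4$ (where $\frac12=\frac n8$) is handled by proving $\theta_{s+\rmd_4}=\theta_{t+\rmd_4}$ via the valence formula and applying Mason to the resulting two-dimensional space, and $n=1,2$ are done by the same argument with $q$-expansions computed from the paper's formal definitions of $\rmd_1,\rmd_2$. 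You instead reduce everything to Proposition \ref{prop201801151023} through the eta-quotient identities, and your treatment of $n=4$ via Lemma \ref{lemma_eta_functions}(5) together with the explicit factorization $\dd^3-\frac1{18}e_4\dd+\frac1{54}e_6=\dd\bigl(\dd^2-\frac1{18}e_4\bigr)$ (which I checked against the commutation relation (\ref{eq201805150955}) and against $\phi_8$; your collapse to $8\eta^8(2z)+\eta^8(\frac z2)$, $8\eta^8(2z)$, $8\eta^8(2z)$ is also correct) is a genuine plus, since this factorization is only implicit in the paper's $n=4$ case. What the paper's route buys is self-containedness --- no imported theta identities, and a uniform mechanism (modular invariance plus distinct exponents plus Mason) --- while yours buys completely explicit solutions and a single case distinction at $n=4$. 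One small repair you should make: for $n=2$ the paper's lattice is $\sqrt2\zz\oplus\sqrt2\zz$, which is isometric to the standard $\{(x_1,x_2)\in\zz^2\mid x_1+x_2\in2\zz\}$ but with the cosets permuted under the isometry, so your decomposition holds in the relabeled form $\theta_{s'+\rmd_2}=\theta_{t'+\rmd_2}=\frac12{\theta_2}^2$ and $\theta_{s'+t'+\rmd_2}=\frac12({\theta_3}^2-{\theta_4}^2)$; since the proposition quantifies over all $p\in\rmd_n^*/\rmd_n$ this costs nothing, but it should be stated (and the $n=1$ case, where your formulas do hold literally for $\rmd_1=2\zz$, deserves the one-line verification).
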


\begin{proof}
(1) The case $n\ge3$.
Since $\theta_2(z)=2\frac{\eta(2z)^2}{\eta(z)}$, $\theta_3(z)=e(-\frac{1}{24})\frac{\eta((z+1)/2)^2}{\eta(z)}$ and $\theta_4(z)=\frac{\eta(z/2)^2}{\eta(z)}$, the result follows from the formulae $\theta_{\rmd_n}=\frac{1}{2}({\theta_3}^n+{\theta_4}^n)$, $\theta_{s+\rmd_n}=\frac{1}{2}({\theta_3}^n-{\theta_4}^n)$ and $\theta_{t+\rmd_n}=\theta_{s+t+\rmd_n}=\frac{1}{2}{\theta_2}^n$ (cf.\ Chapter 4, Section 7.1 in \cite{conway_sloane1999}).
However, we give an alternative proof without using the formulae.
We have
\begin{align}
\theta_{s+t+\rmd_n}&=\sum_{x_1+\cdots+x_n\in2\zz}q^{((x_1+3/2)^2+(x_2+1/2)^2+\cdots+(x_n+1/2)^2)/2}\\
&=\sum_{x_1+\cdots+x_n\in2\zz}q^{((-x_1-1/2)^2+(x_2+1/2)^2+\cdots+(x_n+1/2)^2)/2}=\theta_{t+\rmd_n},
\end{align}
so the $\cc$-vector space spanned by $\theta_{\rmd_n}$, $\theta_{s+\rmd_n}$ and $\theta_{t+\rmd_n}$ is invariant under the weight $\frac{n}{2}$ modular transformations.
The leading terms are $\theta_{\rmd_n}=1+\cdots$, $\theta_{s+\rmd_n}=2nq^{1/2}+\cdots$ and $\theta_{s+t+\rmd_n}=2^{n-1}q^{n/8}+\cdots$.

(1-i) The case $n\ge3$ and $n\neq4$.
The result follows from Mason's theorem.

(1-ii) The case $n=4$.
By Theorem 3.2 in \cite{ebeling2013}, we have $\theta_{s+\rmd_n},\theta_{t+\rmd_n}\in\calm(2,\Gamma(2),1,\triv)$ and the valence formula shows $\theta_{s+\rmd_n}=\theta_{t+\rmd_n}$.
Therefore, $\theta_{\rmd_n}$ and $\theta_{s+\rmd_n}$ satisfy $(\srdiff{2}{2}-\frac{1}{18}E_4)f=0$.

(2) The case $n=1$.
The proof is the same as (1).

(3) The case $n=2$.
$\rmd_2^*=\frac{\sqrt{2}}{2}\zz\oplus\frac{\sqrt{2}}{2}\zz$ and $\rmd_2^*/\rmd_2=\{0,s',t',s'+t'\}$, where $s'=(\frac{\sqrt{2}}{2},0)$ and $t'=(0,\frac{\sqrt{2}}{2})$.
We have $\theta_{\rmd_n}=1+4q+4q^2+\cdots$, $\theta_{s'+\rmd_n}=\theta_{t'+\rmd_n}=2q^{1/4}+4q^{5/4}+2q^{9/4}+\cdots$ and $\theta_{s'+t'+\rmd_n}=4q^{1/2}+8q^{5/2}+4q^{9/2}+\cdots$.
\end{proof}

\begin{exmp}
\label{ex201807171044}
We apply Corollary \ref{cor201801191507} to the setting $k=0$, $n=3$, $N=1$, $m=2$, $l=4$ and $a={\dd}^3-\frac{3p^2-24p+128}{2304}e_4 \dd-\frac{p^2(p-24)}{55296}e_6\in\mcalr_6^3$.
It is clear that $\ch(0,a)=\{\frac{p}{24},-\frac{p}{48},\frac{1}{2}-\frac{p}{48}\}$.
The monic MLDO $b\in\acalr_4^2=\mcalr_4^2$ has the form $b={\dd}^2+x e_4$, where $x\in\cc$.

If $\frac{p}{24}\in\ch(0,b)$, then $x=-\frac{p(p-4)}{576}$ and $b[0]\ker(a[0])\subset\ker(c[4])$, where $c={\dd}^3-\frac{3p^2+72p+512}{2304}e_4\dd-\frac{(p+16)^2 (p-8)}{55296}e_6\in\mcalr_3^6$.

If $-\frac{p}{48}\in\ch(0,b)$, then $x=-\frac{p(p+8)}{2304}$ and $b[0]\ker(a[0])\subset\ker(c[4])$, where $c={\dd}^3-\frac{3p^2-72p+512}{2304}e_4\dd-\frac{(p-8)^2 (p-32)}{55296}e_6\in\mcalr_3^6$.

If $\frac{1}{2}-\frac{p}{48}\in\ch(0,b)$, then $x=-\frac{(p-16)(p-24)}{2304}$ and $b[0]\ker(a[0])\subset\ker(c[4])$, where $c={\dd}^3-\frac{3p^2-72p+1664}{2304}e_4\dd-\frac{(p+16)(p-8)(p-56)}{55296}e_6\in\mcalr_3^6$.
\end{exmp}

\section{Eisenstein series and monic MLDE}
\label{section201712301302}
Utilizing the properties of $\calr$ established in the preceding sections, we give a lower bound of the order of monic MLDEs of weight $4m+6n$ satisfied by ${E_4}^m{E_6}^n$.

By Theorem 1 in \cite{kaneko_koike2003}, the Eisenstein series $E_k$ for $k\in\{4,6,10\}$ satisfies the second order monic MLDE $(\srdiff{2}{k}-\frac{k(k+2)}{144}E_4)f=0$, the \textit{Kaneko-Zagier equation}.
Explicitly,
$(\srdiff{2}{4}-\frac{1}{6}E_4)E_4=0$, $(\srdiff{2}{6}-\frac{1}{3}E_4)E_6=0$ and $(\srdiff{2}{10}-\frac{5}{6}E_4)E_{10}=0$.
For $k\in\{4,6,10\}$, the independent solutions to $(\srdiff{2}{4}-\frac{k(k+2)}{144}E_4)f=0$ are $E_k$ and $F_k$, where $F_k$ is a $q$-series whose leading exponent is $\frac{(k+1)}{6}$.

\begin{prop}
\label{prop201801111416}
Let $n\in\zz_{\ge0}$ and $k\in\{4,6,10\}$.
Then ${E_k}^n$ satisfies a monic MLDE of weight $nk$ and order $n+1$.
\end{prop}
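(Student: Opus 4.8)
The plan is to exhibit $E_k^n$ as one entry of a vector-valued weakly holomorphic modular form obtained from the $n$-th symmetric power of the Kaneko--Zagier solution space, and then to read off a monic MLDE of the exact order $n+1$ from the generalized Mason theorem (Theorem \ref{thm201712301224}). Note that the algebraic-structure results (Theorem \ref{thm201804191022}) already give $E_k^n\in S_{nk}$, but they do not control the order, which is why the refined count of Mason's theorem is needed.

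First I would set up the two base solutions. For $k\in\{4,6,10\}$ the equation $(\srdiff{2}{k}-\frac{k(k+2)}{144}E_4)f=0$ has the independent solutions $E_k$ and $F_k$ (\cite{kaneko_koike2003}); computing the indicial polynomial at $q=0$ gives $\lambda(\lambda-\frac{k+1}{6})$, so the leading exponents are $0$ and $\frac{k+1}{6}$, and since their difference is not an integer the two solutions are logarithm-free $q$-series of width $1$. By Proposition \ref{prop201801121402} the $2$-dimensional solution space is stable under the weight-$k$ slash action, and because $k$ is an even integer this stability upgrades to a genuine $2$-dimensional representation $\rho$ of $\sltwoz$ with $(E_k,F_k)^{\rmt}|_kA=\rho(A)(E_k,F_k)^{\rmt}$.

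Next I would pass to products. Put $g_i=E_k^{\,n-i}F_k^{\,i}$ for $i=0,\dots,n$ and $G=(g_0,\dots,g_n)^{\rmt}$. Because the slash action multiplies automorphy factors, $G|_{nk}A=(\mathrm{Sym}^n\rho)(A)G$, so $G$ is a weakly holomorphic modular form of weight $nk$ with trivial multiplier and the $(n+1)$-dimensional representation $\mathrm{Sym}^n\rho$ (the growth condition being immediate from the $q$-expansions). The leading exponent of $g_i$ is $\lambda_i=\frac{i(k+1)}{6}$ with nonzero leading coefficient, and the $\lambda_0,\dots,\lambda_n$ are distinct reals, so all hypotheses of Theorem \ref{thm201712301224} are satisfied.

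Finally I would check the balance condition for order $N=n+1$ and weight $nk$. One has $12\sum_{i=0}^n\lambda_i=2(k+1)\sum_{i=0}^n i=(k+1)n(n+1)$ and $N(nk+N-1)=(n+1)(nk+n)=(k+1)n(n+1)$, which coincide. Hence Theorem \ref{thm201712301224}(3) applies and $g_0,\dots,g_n$ are solutions of a monic MLDE of order $n+1$ and weight $nk$; in particular $E_k^n=g_0$ satisfies such an equation. I expect the main obstacle to be the modularity bookkeeping of the second and third steps---identifying $\mathrm{Sym}^n\rho$ correctly and verifying the width and growth conditions---since once the vector-valued form is assembled the numerical balance is an identity that holds for every $n$ and the conclusion is automatic.
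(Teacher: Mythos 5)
Your proof is correct and takes essentially the same route as the paper: the paper likewise spans the Kaneko--Zagier solution space by $E_k$ and $F_k$, forms the products ${E_k}^i{F_k}^{n-i}$ with distinct leading exponents $\frac{(n-i)(k+1)}{6}$, and concludes via Theorem \ref{thm201712301224}. Your explicit checks---the indicial roots $0$ and $\frac{k+1}{6}$, the balance identity $(n+1)(nk+n)=12\sum_i\lambda_i$ needed for part (3), and the $\mathrm{Sym}^n$ representation bookkeeping---merely spell out details that the paper's one-line appeal to ``Mason's theorem'' leaves implicit.
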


\begin{proof}
The case $n=0$ is trivial.
Assume $n>0$.
Since the $\cc$-vector space spanned by $E_k$ and $F_k$ is invariant under the weight $k$ modular transformations, the $\cc$-vector space spanned by ${E_k}^i {F_k}^{n-i}$ ($i=0,\ldots,n$) is invariant under the weight $nk$ modular transformations.
The leading exponents of ${E_k}^i {F_k}^{n-i}$ are $\frac{(n-i)(k+1)}{6}$, so the result follows from Mason's theorem.
\end{proof}

By Mason's theorem, ${E_4}^m{E_6}^n$ satisfies a (possibly non-monic) MLDE of weight $4m+6n$ and order $1$.
By Theorem \ref{thm201804191022} (4), it satisfies a monic MLDE of weight $4m+6n$ and some order.
The following theorem gives a lower bound for such orders.
Note that $E_{10}=E_4E_6$.

\begin{thm}
\label{thm201801111417}
Let $m,n\in\zz_{\ge0}$ and $N\in\zz_{>0}$.
Suppose that ${E_4}^m{E_6}^n$ satisfies a monic MLDE of weight $4m+6n$ and order $N$.
Then $N\ge\max\{m,n\}+1$.
\end{thm}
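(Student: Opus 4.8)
The plan is to reduce the statement to a divisibility computation inside the polynomial ring $\calm=\cc[E_4,E_6]$. Set $\phi=({E_4}^m{E_6}^n,4m+6n)$. Since ${E_4}^m{E_6}^n$ is a modular form and $\dd$ preserves $\calm$, the left $\calr$-module $\calr\phi=\sum_{i\ge0}\calm\,\dd^i\phi$ is a homogeneous ideal of $\calm$. The hypothesis that ${E_4}^m{E_6}^n$ satisfies a monic MLDE of weight $4m+6n$ and order $N$ means that there is $a\in\mcalr^N$ with $a\phi=0$, whence $\phi,\dd\phi,\ldots,\dd^{N-1}\phi$ generate $\calr\phi$ over $\calm$ (Lemma \ref{lem201804240917}). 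Under the identification $\calm=\cc[E_4,E_6]$, the operator $\dd$ acts as the derivation determined by $\dd E_4=D_4 E_4=-\tfrac13 E_6$ and $\dd E_6=D_6 E_6=-\tfrac12{E_4}^2$ (from the Ramanujan identities). I will prove $N\ge m+1$ by tracking divisibility by $E_4$; the bound $N\ge n+1$ will follow by the symmetric argument with $E_6$, and together they give $N\ge\max\{m,n\}+1$.

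The key step is the behaviour of the $E_4$-adic valuation $v$ (the largest power of $E_4$ dividing a given form) under $\dd$. I claim that if $v(f)=a\ge1$ then $v(\dd f)=a-1$. Writing $f={E_4}^a g$ with $E_4\nmid g$, the derivation rule gives
\[
\dd f={E_4}^{a-1}\Bigl(-\tfrac{a}{3}E_6\,g+E_4\,\dd g\Bigr),
\]
and modulo $E_4$ the bracket reduces to $-\tfrac a3 E_6\,g$, which is nonzero because $\calm/(E_4)\cong\cc[E_6]$ is an integral domain and $a$, $E_6$ and the image of $g$ are all nonzero. Hence $E_4$ divides $\dd f$ exactly $a-1$ times. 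Applying this inductively to $\phi$, whose $E_4$-valuation is $m$, yields $v(\dd^i\phi)=m-i$ for $0\le i\le m$; in particular $E_4\mid\dd^i\phi$ for every $i<m$, while $E_4\nmid\dd^m\phi$.

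Now suppose for contradiction that $N\le m$. Each generator $\dd^i\phi$ with $0\le i\le N-1$ then has $v(\dd^i\phi)=m-i\ge m-N+1\ge1$, so it lies in the ideal $(E_4)$; consequently $\calr\phi=\sum_{i=0}^{N-1}\calm\,\dd^i\phi\subseteq(E_4)$. On the other hand $\dd^m\phi\in\calr\phi$, yet $v(\dd^m\phi)=0$ shows $\dd^m\phi\notin(E_4)$, a contradiction. Therefore $N\ge m+1$. Exchanging the roles of $E_4$ and $E_6$ (using $\dd E_6=-\tfrac12{E_4}^2$ and the domain $\calm/(E_6)\cong\cc[E_4]$) gives $N\ge n+1$, and hence $N\ge\max\{m,n\}+1$. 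The cases $m=0$ or $n=0$ are covered automatically, since the corresponding bound $N\ge1$ holds by hypothesis.

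The one genuinely substantive point is the choice of invariant: the $E_4$- and $E_6$-adic valuations (equivalently, the vanishing orders of the forms at the elliptic points $\rho$ and $i$) are exactly what behave predictably under the Ramanujan--Serre derivative, dropping by one at each application. Once the identity $v(\dd f)=v(f)-1$ for $v(f)\ge 1$ is established, the lower bound is a short ideal-membership argument; the only place where the specific action of $\dd$ on $\calm$ is used is in checking that the leading coefficient $-\tfrac a3 E_6\,g$ does not vanish modulo $E_4$.
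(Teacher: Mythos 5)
Your proof is correct, and it takes a genuinely different route from the paper's. The paper proves Theorem \ref{thm201801111417} with the division machinery of Subsection \ref{subsection201801181005}: starting from the first-order non-monic relation $b\phi=0$ with $b=e_4e_6\dd+\frac{n}{2}{e_4}^3+\frac{m}{3}{e_6}^2$, it divides ${e_4}^N{e_6}^N a$ by $b$, shows the remainder vanishes, and then determines the quotient's coefficients $h_i$ by an explicit induction (the closed form (\ref{201712221059})), reading off from the constant term the conditions $n(1-n)\cdots(N-1-n)=0$ and $m(1-m)\cdots(N-1-m)=0$, hence $N>n$ and $N>m$. You instead work entirely inside the commutative ring $\calm=\cc[E_4,E_6]$: your key lemma, that the $E_4$-adic (resp.\ $E_6$-adic) valuation drops by exactly one under $\dd$ as long as it is positive, is verified correctly — $\dd({E_4}^a g)={E_4}^{a-1}\bigl(-\frac{a}{3}E_6 g+E_4\,\dd g\bigr)$ with $-\frac{a}{3}E_6 g$ nonzero in the domain $\calm/(E_4)\cong\cc[E_6]$ (using characteristic zero so that $a\neq0$) — and your reduction to the generation statement $\calr\phi=\sum_{i=0}^{N-1}\calm\,{\dd}^i\phi$ is exactly the paper's Lemma \ref{lem201804240917}, so the ideal-membership contradiction ($\calr\phi\subseteq(E_4)$ yet ${\dd}^m\phi\notin(E_4)$ when $N\le m$, and symmetrically with $E_6$) is airtight. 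Your argument is shorter and more conceptual, avoids the noncommutative division theorem entirely, and generalizes for free: for any nonzero $f\in\calm_k$, every monic MLDE of weight $k$ satisfied by $f$ has order at least $\max\{\ord_{E_4}(f),\ord_{E_6}(f)\}+1$, i.e., a lower bound governed by the vanishing orders at the elliptic points, of which the theorem is the special case $f={E_4}^m{E_6}^n$. What the paper's computation buys in exchange is explicit data — the full shape of the quotient $c$ with $cb={e_4}^N{e_6}^N a$ — which showcases the MLDO-division technique that is the paper's theme and could be a starting point for attacking the conjecture $\mord({E_4}^m{E_6}^n)=\max\{m,n\}+1$, whereas your obstruction argument yields the lower bound only, but with far less work.
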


\begin{proof}
Suppose $(\srdiff{N}{4m+6n}+g_2 \srdiff{N-2}{4m+6n}+\cdots+g_N){E_4}^m{E_6}^n=0$ with $g_i\in\calm_{2i}$.
Set $a={\dd}^N+g_2 {\dd}^{N-2}+\cdots+g_N\in\mcalr_{2N}^N$ and $b=e_4 e_6 \dd+\frac{n}{2}{e_4}^3+\frac{m}{3}{e_6}^2\in\acalr_{12}^1$.
Then $b({E_4}^m{E_6}^n)=0$.
By the division of MLDO, ${e_4}^N{e_6}^N a=cb+c'$ for some $c\in\calr_{12N-12}^{N-1}$ and $c'\in\calr_{12N}^{<1}$.
Since ${E_4}^m{E_6}^n\neq0$, we have $c'=0$.
Set $c=h_1 {\dd}^{N-1}+\cdots +h_N$, where $h_i\in\calm_{10N+2i-12}$.
Comparing the tops of ${e_4}^N{e_6}^N a=cb$, we see $h_1={e_4}^{N-1}{e_6}^{N-1}$.
Comparing the coefficients of ${\dd}^{N-i}$ for $i\in\{1,\ldots,N-1\}$, we see
\begin{align}
\label{eq201712161731}
{e_4}^N{e_6}^N g_i=&e_4e_6 h_{i+1}+\sum_{j=1}^i h_j\binom{N-j}{N-i-1}D^{i-j+1}[e_4e_6]\nonumber\\
&+\sum_{j=1}^i h_j\binom{N-j}{N-i}D^{i-j}\left[\frac{n}{2}{e_4}^3+\frac{m}{3}{e_6}^2\right],
\end{align}
since the coefficient of ${\dd}^{N-i}$ in $h_j{\dd}^{N-j}(e_4 e_6\dd+\frac{n}{2}{e_4}^3+\frac{m}{3}{e_6}^2)$ is
\begin{equation}
\begin{cases}
h_j(\binom{N-j}{N-i-1}D^{i-j+1}[e_4e_6]+\binom{N-j}{N-i}D^{i-j}[\frac{n}{2}{e_4}^3+\frac{m}{3}{e_6}^2]) & \text{if }1\le j\le i,\\
e_4e_6 h_j & \text{if }j=i+1,\\
0 & \text{if } i+1<j\le N.
\end{cases}
\end{equation}

Let us prove
\begin{align}
\label{201712221059}
h_i=&{e_4}^{N-i}{e_6}^{N-i}\left( \frac{(N-(n+i-1))\cdots(N-(n+1))}{2^{i-1}}{e_4}^{3i-3}\right.\nonumber\\
&\left.+\frac{(N-(m+i-1))\cdots(N-(m+1))}{3^{i-1}}{e_6}^{2i-2}+e_4 e_6 R_i \right)
\end{align}
by induction on $i\ge2$, where $R_i\in\qq[e_4,e_6,g_1,\ldots,g_{i-1}]\subset\calm$.

(1) By setting $i=1$ in Eq.\ (\ref{eq201712161731}),
\begin{equation}
{e_4}^N{e_6}^N g_1=e_4e_6h_2+h_1\binom{N-1}{N-2}D[e_4e_6]+h_1\left(\frac{n}{2}{e_4}^3+\frac{m}{3}{e_6}^2\right),
\end{equation}
so $h_2={e_4}^{N-2}{e_6}^{N-2}(\frac{N-(n+1)}{2}{e_4}^3+\frac{N-(m+1)}{3}{e_6}^2+e_4e_6g_1)$.

(2) Assume that the result holds for all $i'\le i$.
By Eq.\ (\ref{eq201712161731}),
\begin{align}
&e_4e_6h_{i+1}={(e_4 e_6)}^N g_i-h_i\binom{N-i}{N-i-1}D[e_4e_6]-h_i\left(\frac{n}{2}{e_4}^3+\frac{m}{3}{e_6}^2\right)\nonumber\\
&\quad-\sum_{j=1}^{i-1} h_j\binom{N-j}{N-i-1}D^{i-j+1}[e_4e_6]-\sum_{j=1}^{i-1} h_j\binom{N-j}{N-i}D^{i-j}\left[\frac{n}{2}{e_4}^3+\frac{m}{3}{e_6}^2\right]\\
&=h_i\left(\frac{N-(n+i)}{2}{e_4}^3+\frac{N-(m+i)}{3}{e_6}^2\right)+{(e_4 e_6)}^{N-i+1}\qq[e_4,e_6,g_1,\ldots,g_{i-2},g_i]\\
&={(e_4 e_6)}^{N-i}\frac{(N-(n+i))\cdots(N-(n+1))}{2^i}{e_4}^{3i}\nonumber\\
&\quad+{(e_4 e_6)}^{N-i}\frac{(N-(m+i))\cdots(N-(m+1))}{3^i}{e_6}^{2i}+{(e_4 e_6)}^{N-i+1}\qq[e_4,e_6,g_1,\ldots,g_i],
\end{align}
therefore Eq.\ (\ref{201712221059}) holds for $i+1$.

By comparing the coefficients of ${\dd}^0$ in ${e_4}^N{e_6}^N a=cb$, it follows from Eq.\ (\ref{201712221059}) that
\begin{align}
{e_4}^N{e_6}^Ng_N&=\sum_{i=0}^Nh_iD^{N-i}\left[\frac{n}{2}{e_4}^3+\frac{m}{3}{e_6}^2\right]\\
&=\frac{n(N-(n+N-1))\cdots(N-(n+1))}{2^N}{e_4}^{3N}\nonumber\\
&\quad+\frac{m(N-(m+N-1))\cdots(N-(m+1))}{3^N}{e_6}^{2N}+e_4e_6\calm.
\end{align}
Therefore, $n(N-(n+N-1))\cdots(N-(n+1))=0$ and $m(N-(m+N-1))\cdots(N-(m+1))=0$, so that $N>n$ and $N>m$.
\end{proof}

For $\phi\in H=\bigoplus_{n\in\rr}H_n$, we denote by $\mord(\phi)$ the least nonnegative integer $n$ such that some element of $\mcalr^n$ annihilates $\phi$.
If there is no such $n$, we formally set $\mord(\phi)=\infty$.
We call $\mord(\phi)$ the \textit{modular order} of $\phi$.

By Proposition \ref{prop201801111416} and Theorem \ref{thm201801111417}, $\mord({E_4}^m{E_6}^n)\ge\max\{m,n\}+1$ and $\mord({E_4}^n)=\mord({E_6}^n)=\mord({E_4}^n{E_6}^n)=n+1$.
We conjecture the following.
\begin{conj}
$\mord({E_4}^m{E_6}^n)=\max\{m,n\}+1$ for all $m,n\ge0$.
\end{conj}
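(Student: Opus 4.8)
The lower bound $N\ge\max\{m,n\}+1$ is exactly Theorem \ref{thm201801111417}; it can also be seen locally, since a monic MLDE makes every point of $\calh$ an ordinary point of the associated ODE, so at the zero $\rho=e^{2\pi i/3}$ of $E_4$ and the zero $i$ of $E_6$ the nonzero solutions have vanishing orders in $\{0,1,\dots,N-1\}$, while ${E_4}^m{E_6}^n$ vanishes there to orders $m$ and $n$ respectively. Thus the conjecture is equivalent to the upper bound $\mord({E_4}^m{E_6}^n)\le\max\{m,n\}+1$, that is, to the construction, for $N=\max\{m,n\}+1$, of an element $a\in\mcalr_{2N}^N$ with $a({E_4}^m{E_6}^n,4m+6n)=0$. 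It suffices to treat $m\ge n$ (the case $n>m$ being analogous), so that $N=m+1$.

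The first plan is to invoke Mason's theorem (Theorem \ref{thm201712301224}). I would look for a modular-invariant space $V$ of weight $4m+6n$ and dimension $N=m+1$ containing ${E_4}^m{E_6}^n$, whose members have distinct real leading exponents $\lambda_1=0,\lambda_2,\dots,\lambda_N$ with $\lambda_1+\cdots+\lambda_N=\frac{N(4m+6n+N-1)}{12}$; part (3) of Theorem \ref{thm201712301224} then furnishes a monic MLDE of order $N$ annihilating every element of $V$. The cases $n=0$ and $m=0$ are settled by Proposition \ref{prop201801111416} with $V=\langle {E_4}^i{F_4}^{m-i}\rangle$ (resp.\ its $E_6$-analogue), and the case $m=n$ by $V=\langle {E_{10}}^i{F_{10}}^{n-i}\rangle$, since $E_{10}=E_4E_6$. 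The obstacle is the genuinely off-diagonal case $m>n>0$: the obvious candidate $E_6^n\cdot\langle {E_4}^i{F_4}^{m-i}\rangle$ has the right weight and dimension, but its leading exponents sum to only $\frac{5m(m+1)}{12}$, short of the required value by $\frac{n(m+1)}{2}$, so it satisfies only a \emph{non-monic} MLDE. One must therefore produce companion solutions carrying strictly larger leading exponents, and it is not clear how to do this uniformly in $(m,n)$.

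A second, more computational plan is to extend the proof of Theorem \ref{thm201801111417}. Recall the first-order relation $b({E_4}^m{E_6}^n)=0$ with $b=e_4e_6\dd+\frac n2{e_4}^3+\frac m3{e_6}^2\in\acalr_{12}^1$. I would attempt to produce $a$ by choosing $g_i\in\calm_{2i}$ and solving the recursion (\ref{eq201712161731})--(\ref{201712221059}) so that ${e_4}^N{e_6}^N a=cb$ holds \emph{with vanishing remainder}, thereby clearing the $E_4$- and $E_6$-denominators of the meromorphic annihilator $\dd+\frac n2\frac{{e_4}^2}{e_6}+\frac m3\frac{e_6}{e_4}$ of ${E_4}^m{E_6}^n$. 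The favorable fact is that at $N=\max\{m,n\}+1$ the top obstruction from that proof vanishes: the factor $N-(\max\{m,n\}+1)$ is zero, so the constraints $n\prod_{j=1}^{N-1}(N-n-j)=0$ and $m\prod_{j=1}^{N-1}(N-m-j)=0$ are satisfied rather than contradictory. The task is then to show that the remaining coefficient equations for $g_2,\dots,g_{N}$ are consistently solvable within $\calm$.

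In either approach the heart of the matter, and the reason the statement is only a conjecture, is the passage from the \emph{meromorphic} order-one annihilator (which always exists, via $b$) to a \emph{holomorphic monic} one: one must show that the poles at $\rho$ and $i$ can be cleared \emph{simultaneously} at exactly order $\max\{m,n\}+1$ with unit leading term. The local analysis shows that no obstruction of order below $\max\{m,n\}+1$ survives, and the known cases suggest none of higher order appears either, but I expect controlling the interaction of the two denominators $E_4$ and $E_6$ under iterated Ramanujan--Serre differentiation—equivalently, proving the solvability of the linear system above for all $(m,n)$ at once—to be the main difficulty.
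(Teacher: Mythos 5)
You are attempting to prove a statement that the paper itself presents only as a conjecture: the paper proves the lower bound $\mord({E_4}^m{E_6}^n)\ge\max\{m,n\}+1$ (Theorem \ref{thm201801111417}, combined with Proposition \ref{prop201801111416} for the cases where equality is known, namely $n=0$, $m=0$ and $m=n$ via $E_{10}=E_4E_6$), and offers no proof of the matching upper bound. Your proposal reproduces exactly this state of knowledge and, as you yourself concede, does not close the genuinely open case $m>n>0$. Concretely: in your first plan, the candidate space $E_6^n\langle {E_4}^i{F_4}^{m-i}\rangle$ fails the exponent-sum criterion of Theorem \ref{thm201712301224}(3) by the deficit $\frac{n(m+1)}{2}$ (your computation is correct), and you supply no construction of the companion solutions with larger leading exponents that would be needed; in your second plan, you verify only that the single top obstruction extracted in the proof of Theorem \ref{thm201801111417} vanishes at $N=\max\{m,n\}+1$, which is necessary but far from sufficient — requiring the remainder $c'$ in the division ${e_4}^N{e_6}^N a=cb+c'$ to vanish imposes a full triangular system of conditions on $g_2,\dots,g_N\in\calm_*$, and its solvability for all $(m,n)$ is precisely the content of the conjecture, not a routine verification. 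So the proposal is a correct framing plus two plausible but incomplete strategies, not a proof; the upper bound $\mord({E_4}^m{E_6}^n)\le\max\{m,n\}+1$ for $m\neq n$, $m,n>0$ remains unestablished.

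One genuinely positive point worth keeping: your local proof of the lower bound is correct and different from (and more elementary than) the paper's. Since a monic MLDE has unit leading term and coefficients holomorphic on $\calh$, every point of $\calh$ is an ordinary point, so a nonzero solution of an order-$N$ monic MLDE vanishes to order at most $N-1$ at any $z_0\in\calh$; as $E_4$ and $E_6$ have simple zeros at $\rho=e^{2\pi i/3}$ and $i$ respectively (with $E_6(\rho)\neq0$, $E_4(i)\neq0$), the form ${E_4}^m{E_6}^n$ vanishes there to orders exactly $m$ and $n$, forcing $N\ge\max\{m,n\}+1$. This avoids the MLDO-division recursion of Theorem \ref{thm201801111417} entirely, and it even applies to monic annihilators in $\mcalr^N$ with inhomogeneous coefficients, which is the generality in which $\mord$ is defined.
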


\section{Miscellaneous results}
\label{section20180220}
In this section, we show some properties of $\calr$ which are not relevant to the preceding sections.

\subsection{Ideals of $\calr$}
In this subsection, we study some ideals of $\calr$.

\begin{lem}
The center $Z(\calr)$ is equal to $\langle \Delta\rangle$ (the $\cc$-subalgebra of $\calr$ generated by $\Delta$).
\end{lem}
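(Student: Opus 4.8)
The plan is to establish the two inclusions $\langle\Delta\rangle\subseteq Z(\calr)$ and $Z(\calr)\subseteq\langle\Delta\rangle$ separately. For the first, since $\langle\Delta\rangle=\cc[\iota(\Delta)]$, it suffices to check that $\iota(\Delta)$ commutes with the three generators $e_4$, $e_6$, $\dd$ of $\calr$. It commutes with $e_4$ and $e_6$ because $\calm$ is a commutative subalgebra of $\calr$. For $\dd$, Lemma \ref{lem201805151227} (2) gives $[\dd,\iota(\Delta)]=D[\iota(\Delta)]=\iota(D_{12}\Delta)$, and $D_{12}\Delta=\Delta'-E_2\Delta=0$ since $\Delta=\eta^{24}$ together with $\eta'=\tfrac1{24}E_2\eta$ yields $\Delta'=E_2\Delta$. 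Hence $\iota(\Delta)\in Z(\calr)$, and so $\langle\Delta\rangle\subseteq Z(\calr)$.

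For the reverse inclusion, I would first observe that $Z(\calr)$ is a graded subspace: if $a=\sum_k a_{(k)}$ with $a_{(k)}\in\calr_k$ is central, then for homogeneous $b\in\calr_j$ the terms $[a_{(k)},b]\in\calr_{k+j}$ lie in distinct graded pieces, so each vanishes; as homogeneous elements span $\calr$, each $a_{(k)}$ is itself central. It therefore suffices to treat a homogeneous $0\neq a\in Z(\calr)\cap\calr_k$, which by Theorem \ref{thm201712301243} I write as $a=\sum_{i=0}^n a_i\dd^i$ with $a_i\in\calm$, $a_n=\topp(a)\neq0$ and $n=\ord(a)$.

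The crux is to show $n=0$, and I would extract this from $[a,e_4]=0$ by isolating the coefficient of $\dd^{n-1}$. Using Lemma \ref{lem201805151227} (3) together with $[\dd,e_4]=-\tfrac13 e_6$, one has $\dd^i e_4=e_4\dd^i-\tfrac{i}{3}e_6\dd^{i-1}+(\text{terms of order}\le i-2)$, while $e_4 a=\sum_i a_i e_4\dd^i$ by commutativity of $\calm$. The order-$n$ contributions cancel, the $a_{n-1}E_4$ terms in order $n-1$ cancel as well, and the only surviving contribution to order $n-1$ comes from $i=n$, giving
\begin{equation}
[a,e_4]=-\tfrac{n}{3}\,a_n E_6\,\dd^{\,n-1}+(\text{terms of order}\le n-2).
\end{equation}
Since $\calm$ is an integral domain and $a_nE_6\neq0$, the vanishing $[a,e_4]=0$ forces $n=0$. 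This top-order commutator computation is the main obstacle, although it is routine once Lemma \ref{lem201805151227} (3) is in hand; everything else is bookkeeping.

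Once $n=0$, so that $a\in\calm_k$, commuting with $\dd$ gives $0=[a,\dd]=-[\dd,\iota(a)]=-\iota(D_k a)$, hence $D_k a=0$. I would finish by identifying this kernel: if $a\neq0$ then $a'/a=\tfrac{k}{12}E_2=(\eta^{2k})'/\eta^{2k}$, so $a$ is a constant multiple of $\eta^{2k}$; for $a$ to lie in $\calm_k$ its multiplier $\chi^{2k}$ must be trivial, which (as $\chi$ has order $24$) forces $12\mid k$ and $\eta^{2k}=\Delta^{k/12}$, whence $a\in\cc\,\Delta^{k/12}\subseteq\langle\Delta\rangle$ (and $a=0$ when $12\nmid k$). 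Combining the two inclusions yields $Z(\calr)=\langle\Delta\rangle$.
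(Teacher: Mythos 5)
Your proof is correct and follows essentially the same route as the paper's: both verify $[\dd,\Delta]=0$ for one inclusion, reduce to a homogeneous central element $a=\sum_{i=0}^n a_i\dd^i$, extract the coefficient of $\dd^{n-1}$ from $[a,e_4]=0$ (via $D[e_4]=-\tfrac13 e_6$) to force $\ord(a)=0$, and then use $[a,\dd]=0$, i.e.\ $D_k a=0$, to pin $a$ down to $\cc\,\Delta^{k/12}$. The only cosmetic differences are that you make the gradedness of $Z(\calr)$ explicit (the paper uses it tacitly) and that you identify $\ker D_k\cap\calm_k$ by solving $a=c\,\eta^{2k}$ and invoking the order-$24$ multiplier $\chi$, whereas the paper reads off from $D_ka=0$ that the leading exponent of $a$ is $k/12$, hence a nonnegative integer, and divides by $\Delta^{k/12}$ --- both finishes are valid.
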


\begin{proof}
Since $[\dd,1]=[\dd,\Delta]=0$, $Z(\calr)\supset \langle \Delta\rangle$.
To prove $Z(\calr)\subset\langle \Delta\rangle$, let $a\in Z(\calr)$ be homogeneous.
Set $a=\sum_{i=0}^n a_i {\dd}^i$, where $a_i\in\calm(l-2i)$ for some $l\in\zz$ and $a_n\neq 0$. 
By comparing the coefficients of ${\dd}^{n-1}$ in $a e_4=e_4 a$, we have $a_{n-1}e_4+n a_nD[e_4]=e_4 a_{n-1}$, so $n=0$ and $a=a_0\in\calm(l)$.
Since $\dd a=a\dd$, we have $0=D[a]=D_l a$, so that the leading coefficient of $a$ is $\frac{l}{12}$.
Since the leading coefficient of an element of $\calm(l)$ is a nonnegative integer, $\frac{a}{\Delta^{l/12}}\in\calm(0)=\cc$, which means $a\in\langle \Delta\rangle$.
\end{proof}

\begin{prop}
$\Delta\calr$ is a completely prime two-sided ideal of $\calr$. 
\end{prop}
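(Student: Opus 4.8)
The plan is to prove that the quotient ring $\calr/\Delta\calr$ is an integral domain, which is precisely the assertion that $\Delta\calr$ is a completely prime two-sided ideal. First I would record that $\Delta\calr$ is genuinely two-sided: by the preceding lemma $\Delta$ lies in the center of $\calr$, so $\Delta\calr=\calr\Delta$ is simultaneously a left and a right ideal. The substantive work is then to understand the quotient.

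Next I would identify $\calr/\Delta\calr$ with a skew polynomial ring over a familiar coefficient ring. Using the isomorphism $\calr\cong\calm[\xi;1,\dd]$ of Theorem \ref{thm201805311336}, every element of $\calr$ is uniquely a finite sum $\sum_i a_i\xi^i$ with $a_i\in\calm$. Because $[\dd,\Delta]=0$ (as used in the proof of the preceding lemma), $\Delta$ commutes with $\xi$, so $\Delta\calr$ is exactly the set of those sums all of whose coefficients lie in the ideal $\cals=\Delta\calm$. Since $\dd$ is a derivation of $\calm$ (the Leibniz rule (\ref{eq201801200956})) and $\dd$ annihilates $\Delta$, the ideal $\Delta\calm$ is $\dd$-stable, so $\dd$ descends to a derivation $\bar\dd$ of $\calm/\Delta\calm$. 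Reducing coefficients modulo $\Delta\calm$ then yields a $\cc$-algebra isomorphism
\[
\calr/\Delta\calr\;\cong\;(\calm/\Delta\calm)[\xi;1,\bar\dd].
\]

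Finally, I would invoke that $\cals=\Delta\calm$ is a prime ideal of $\calm$ (recorded in Subsection \ref{subsection201801141056}), so $\calm/\Delta\calm$ is a commutative integral domain; a skew polynomial ring $R[\xi;1,d]$ over a domain $R$ is again a domain, since the top coefficient of a product is the product of the top coefficients (cf.\ Theorem 1.2.9 in \cite{mcconnell_robson2001}, already used for Corollary \ref{cor201804181423}). Hence $\calr/\Delta\calr$ is a domain and $\Delta\calr$ is completely prime. I expect the only delicate point to be the descent step: one must verify that $\Delta$ is $\dd$-constant so that $\Delta\calm$ is stable under $\dd$ and the skew structure passes cleanly to the quotient. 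Granting that, the identification of the quotient with $(\calm/\Delta\calm)[\xi;1,\bar\dd]$ is the crux, and the remaining implications are formal.
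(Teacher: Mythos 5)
Your proof is correct, but it takes a structurally different route from the paper's. The paper argues directly inside $\calr$: writing $a=\sum a_i\dd^i$, $b=\sum b_j\dd^j$ with $ab\in\Delta\calr$ and $a\notin\Delta\calr$, it picks the largest $N$ with $a_N\notin\cals$, compares tops to get $a_N b_m\in\cals$ (hence $b_m\in\cals$ by primeness of $\cals$ in $\calm$), and then strips off leading terms and inducts downward on $j$ to conclude $b\in\Delta\calr$. That hand computation is exactly the leading-coefficient argument that a skew polynomial ring over a domain is a domain, carried out in situ; your proof packages it abstractly by first identifying $\calr/\Delta\calr\cong(\calm/\cals)[\xi;1,\bar\dd]$ and then citing the general domain result for skew polynomial rings. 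Both proofs rest on the same two inputs --- centrality of $\Delta$ (equivalently $[\dd,\Delta]=0$, i.e.\ $D_{12}\Delta=\Delta'-E_2\Delta=0$, immediate from $\eta'=\frac{1}{24}E_2\eta$) and primeness of $\cals=\Delta\calm$ in $\calm=\cc[E_4,E_6]$ --- so neither is logically stronger, but each buys something: the paper's version is self-contained and needs no verification that the skew structure descends, while yours isolates the real mechanism and generalizes at no extra cost (for any $\dd$-stable prime $P\subset\calm$, the same argument shows $P\calr$ is completely prime, in line with the paper's own remarks that its division results hold for general graded skew polynomial rings). Your flagged ``delicate point'' is indeed the only thing to check, and it does hold: $\dd(\Delta)=0\in\cals$ gives $\dd(\cals)\subset\cals$, so $\bar\dd$ is well defined and the reduction map $\sum a_i\xi^i\mapsto\sum\bar a_i\xi^i$ is a ring homomorphism with kernel exactly $\Delta\calr$ (your identification of $\Delta\calr$ with the polynomials having all coefficients in $\cals$ matches the paper's explicit description of $\Delta\calr$).
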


\begin{proof}
Since $\Delta\in Z(\calr)$, $\Delta\calr$ is a two-sided ideal.
Note that $\Delta\calr=\{\sum_{i=0}^n a_i {\dd}^i\mid a_i\in\cals,\ n\ge 0\}$.
Let $a,b\in\calr\backslash\{0\}$ such that $ab\in\Delta\calr$ and $a\notin\Delta\calr$.
Set $a=\sum_{i=0}^n a_i {\dd}^i$ and $b=\sum_{j=0}^m b_j {\dd}^j$, where $a_i,b_j\in\calm$, $a_n,b_m\neq0$ and $n,m\ge 0$.
There exists the maximum number $N=\max\{i\in[0,n]\mid a_i\notin\cals\}$ and $\sum_{i=0}^N a_i {\dd}^i \sum_{j=0}^m b_j {\dd}^j\in\Delta\calr$.
Since the top is $a_N b_m$, we have $a_N b_m\in\cals$ and $b_m\in\cals$, therefore $\sum_{i=0}^N a_i {\dd}^i \sum_{j=0}^{m-1}b_j {\dd}^j\in\Delta\calr$.
By induction on $j$, we see that every $b_j$ belongs to $\cals$ and $b\in\Delta\calr$.
\end{proof}

Since $\calr$ admits the division with remainder, it has a PID-like property.
For $a\in\calr$, let $[a]$ denote $\{b\in\calr \mid \text{there exists }f\in\calm\backslash\{0\}\text{ such that }fb\in\calr a\}$.

\begin{thm}
\label{thm201804181551}
\begin{enumerate}
\item
For any $a\in\calr$, $[a]$ is a left ideal of $\calr$.
\item
For any left ideal $I$ of $\calr$, there exists $a\in I$ such that $\calr a\subset I \subset [a]$.
\item
For any $\phi\in H$, there exists $a\in\calr$ such that $\ann_{\calr}(\phi)=[a]$.
\end{enumerate}
\end{thm}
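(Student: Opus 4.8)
The plan is to prove all three parts by passing to a localization of $\calr$ in which every left ideal becomes principal. The crucial observation is that $T=\calm\backslash\{0\}$ is a left and right Ore set in $\calr$. The seed computation is that for $f\in T$ one has, in $\calr$, the identity $f^2\dd=(f\dd-D[f])f$ (expand the right side using $\dd f=f\dd+D[f]$ and the commutativity of $\calm$); together with Proposition \ref{prop201801051831} this propagates to all $r\in\calr$, giving $g\in T$ and $r'\in\calr$ with $gr=r'f$, and symmetrically on the other side. Hence the Ore localization $\calr_T:=T^{-1}\calr$ exists, and since localizing a skew polynomial ring at the nonzero elements of its commutative coefficient domain yields the skew polynomial ring over the fraction field of that domain, $\calr_T\cong \mathrm{Frac}(\calm)[\xi;1,\dd]=\cc(E_4,E_6)[\xi;1,\dd]$. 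Because $\mathrm{Frac}(\calm)=\cc(E_4,E_6)$ is a \emph{field}, Ore's theory (\cite{ore1933}; see also Chapter 1 of \cite{mcconnell_robson2001}) makes $\calr_T$ a genuine left and right principal ideal domain. The key point is that this inverts all of $\calm\backslash\{0\}$, not only the homogeneous denominators used for $\ocalm$ elsewhere: over $\ocalm$ one only gets a graded-PID, whereas over the full field $\cc(E_4,E_6)$ one gets an honest PID. Finally I would record the identity $[a]=\calr\cap\calr_T a$, which is immediate since $fb\in\calr a$ with $f\in T$ is equivalent to $b=f^{-1}(fb)\in\calr_T a$.

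Granting this setup, part (1) is formal: $\calr_T a$ is a left ideal of $\calr_T$, and its intersection with the subring $\calr\subset\calr_T$ is closed under addition and under left multiplication by elements of $\calr$, hence is a left ideal of $\calr$.

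For part (2) I would extend $I$ to the left ideal $\calr_T I$ of $\calr_T$. Because $T$ is Ore and $I$ is a left ideal of $\calr$, putting finitely many fractions over a common left denominator shows every element of $\calr_T I$ has the form $t^{-1}c$ with $t\in T$ and $c\in I$. Since $\calr_T$ is a PID, $\calr_T I=\calr_T\alpha$ is principal; writing the generator as $\alpha=t^{-1}c$ with $c\in I$ and using that $t$ is a unit of $\calr_T$ gives $\calr_T I=\calr_T c$. I then set $a:=c\in I$. Now $\calr a\subset I$ since $I$ is a left ideal, while $I\subset\calr_T I=\calr_T a$ forces $I\subset\calr\cap\calr_T a=[a]$. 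The degenerate case $I=0$ is covered by $a=0$, as $\calr$ being a domain gives $[0]=\{0\}$.

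Part (3) follows by applying part (2) to the left ideal $I=\ann_{\calr}(\phi)$, producing $a\in\ann_{\calr}(\phi)$ with $\ann_{\calr}(\phi)\subset[a]$. For the reverse inclusion I take $b\in[a]$, so $fb=ca$ for some $f\in\calm\backslash\{0\}$ and $c\in\calr$, whence $f\cdot(b\phi)=c(a\phi)=0$ in $H$; since a nonzero element of $\calm=\cc[E_4,E_6]$ is a nonvanishing-on-a-dense-set holomorphic function and $\hol$ is an integral domain, $b\phi=0$, i.e.\ $b\in\ann_{\calr}(\phi)$. Thus $[a]=\ann_{\calr}(\phi)$. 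The main obstacle, and the only place needing genuine care, is the first paragraph: one must localize at \emph{all} of $\calm\backslash\{0\}$ so that the coefficient ring becomes the field $\cc(E_4,E_6)$ and $\calr_T$ becomes a true PID rather than the merely graded-PID $\ocalr$; verifying the Ore condition for $T$ is the only computation required, and it reduces to the quotient rule for $\dd$.
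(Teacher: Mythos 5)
Your proposal is correct, but it reaches the theorem by a genuinely different route than the paper. The paper stays entirely inside $\calr$ and argues Euclidean-style: for (1) it uses the division theorem (Theorem \ref{thm201712301245}) to write $gc=qf$ with $g\in\calm\backslash\{0\}$, so that $fb=pa$ yields $g(cb)=qpa\in\calr a$; for (2) and (3) it simply picks $a\in I$ (resp.\ $a\in\ann_{\calr}(\phi)$) of \emph{lowest order}, division with remainder then forcing the remainder of any $b\in I$ to lie in $I$ with strictly smaller order, hence to vanish, so that $fb\in\calr a$ --- i.e.\ the generator is produced as a minimal-order element and all denominators are cleared by hand via powers of $\topp(a)$. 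You instead externalize this bookkeeping: you invert all of $T=\calm\backslash\{0\}$, identify $\calr_T\cong\cc(E_4,E_6)[\xi;1,\dd]$ as an honest left and right PID (correctly distinguishing this from the paper's $\ocalr$, which inverts only homogeneous denominators and is merely a graded-PID), and observe the contraction identity $[a]=\calr\cap\calr_T a$, after which all three parts are formal PID arguments. Both proofs are sound; your localization route is more structural --- it explains the saturation $[\,\cdot\,]$ as contraction of extended ideals and meshes with the paper's own remark that the theorem holds for any skew polynomial ring over a commutative domain --- at the cost of importing Ore-localization machinery (your verification of the Ore condition is only sketched, though the reduction to extending $\dd$ to $\mathrm{Frac}(\calm)$ by the quotient rule is standard, and note that Corollary \ref{cor201804181423} concerns the Ore \emph{domain} property, not the Ore set $T$, so your check is genuinely needed). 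Two small points worth tightening: the common-denominator step is also needed for the backward inclusion of $[a]=\calr\cap\calr_T a$, not just for $\calr_T I$; and in (3) the cancellation of a possibly inhomogeneous $f\in\calm\backslash\{0\}$ against $b\phi\in H$ is cleanest by comparing lowest-weight homogeneous components, which shows $H$ is an integral domain since $\hol$ is.
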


\begin{proof}
(1) It is clear that $[a]$ is an abelian group. 
Let $b\in[a]$ and $c\in\calr$.
Then $fb=pa$ for some $f\in\calm\backslash\{0\}$ and $p\in\calr$.
By the division, $gc=qf$ for some $g\in\calm\backslash\{0\}$ and $q\in\calr$.
Therefore, $g(cb)=qfb=qpa\in\calr a$ and $cb\in[a]$.

(2) If $I \neq\{0\}$, choose an element $a\in I$ of the lowest order.

(3) If $\ann_{\calr}(\phi)\neq\{0\}$, choose an element $a\in \ann_{\calr}(\phi)$ of the lowest order.
\end{proof}

\begin{rem}
Just like the division properties, Theorem \ref{thm201804181551} holds for a skew polynomial ring $S=R[x;1,d]$, where $R$ is a commutative integral domain and $d$ is a derivation of $R$, and for a left $S$-module $M$ such that $fm=0$ implies $m=0$ for $f\in R\backslash\{0\}$ and $m\in M$.
\end{rem}

\subsection{$\cc$-algebras containing $\calr$ and their endomorphisms}

\begin{lem}
\label{201712241805}
Suppose
\begin{equation}
\sum_{i,j,k,l\ge0,\ 2i+2j+4k+6l=n}C_{ijkl}z^{-i}{E_2}^j{E_4}^k{E_6}^l=0, \label{eq201802231002}
\end{equation}
where $C_{ijkl}\in\cc$ and $n\ge0$.
Then $C_{ijkl}=0$ for all $i,j,k,l\ge 0$ with $2i+2j+4k+6l=n$.
\end{lem}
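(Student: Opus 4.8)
The plan is to separate the ``logarithmic'' variable $z$ from the $q$-series data carried by $E_2,E_4,E_6$, and then invoke the algebraic independence of $E_2,E_4,E_6$. First I would group the left-hand side of Eq.\ (\ref{eq201802231002}) according to the power of $z^{-1}$, rewriting it as $\sum_{i=0}^{I} z^{-i} g_i = 0$, where $I=\lfloor n/2\rfloor$ and
\[
g_i=\sum_{2j+4k+6l=n-2i} C_{ijkl}{E_2}^j{E_4}^k{E_6}^l\in\calqm
\]
is a quasimodular form of weight $n-2i$ (an empty sum, hence $0$, when $n-2i$ is odd or negative). It then suffices to prove that every $g_i$ vanishes identically: once this is known, the algebraic independence of $E_2,E_4,E_6$ (that is, $\calqm=\cc[E_2,E_4,E_6]$ is a polynomial ring) forces $C_{ijkl}=0$ for all admissible $i,j,k,l$.

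To show $g_i\equiv 0$, the key observation is that each $g_i$ is invariant under $z\mapsto z+m$ for $m\in\zz$, because $E_2,E_4,E_6$ are $q$-expansions and $q=e^{2\pi i z}$ is unchanged by integer translation, whereas $z^{-i}$ is not. Evaluating the identity $\sum_{i=0}^I z^{-i}g_i(z)=0$ at the points $z+m\in\calh$ (which again lie in $\calh$ since $\Im(z+m)=\Im(z)>0$) and using $g_i(z+m)=g_i(z)$, I obtain, for each fixed $z\in\calh$, the linear system
\[
\sum_{i=0}^I (z+m)^{-i}\,g_i(z)=0,\qquad m=0,1,\ldots,I.
\]
Setting $x_m=(z+m)^{-1}$, this reads $\sum_{i=0}^I x_m^{\,i}\,g_i(z)=0$, whose coefficient matrix $(x_m^{\,i})_{0\le m,i\le I}$ is a Vandermonde matrix with determinant $\prod_{0\le m<m'\le I}(x_{m'}-x_m)$. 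Since $z\in\calh$ implies $z+m\neq 0$ and the integers $m$ are pairwise distinct, the $x_m$ are pairwise distinct, so this determinant is nonzero. Hence $g_i(z)=0$ for every $i$ and every $z\in\calh$, i.e.\ $g_i\equiv 0$, which completes the argument by the algebraic independence cited above.

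The step I expect to carry the real content is this \emph{separation} of $z$ from the $q$-series: a priori a negative power of $z$ could conspire with the $q$-expansions of $E_2,E_4,E_6$ to produce cancellation, and the translation-invariance trick is precisely what rules this out, converting the single relation into an invertible Vandermonde system over $\cc$. The remaining ingredients---reading off $g_i$ from the weight constraint $2i+2j+4k+6l=n$, and the algebraic independence of $E_2,E_4,E_6$ already used elsewhere in the paper---are routine. An alternative to the Vandermonde argument would be an asymptotic analysis as $\Im(z)\to\infty$, peeling off the powers of $z^{-1}$ one at a time using that each $g_i$ tends to its constant term; but the translation/Vandermonde route is cleaner and avoids any limiting arguments.
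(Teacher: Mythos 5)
Your proof is correct, and it takes a genuinely different route from the paper's. The paper separates the powers of $z^{-1}$ by a modular-transformation-plus-limit argument: it substitutes $z\mapsto A_pz$ with $A_p=(\begin{smallmatrix}1&p\\1&p+1\end{smallmatrix})\in\sltwoz$, divides by $(z+p+1)^n$ using the weight grading and the quasimodular anomaly of $E_2$ from Eq.\ (\ref{eq201807171108}), and lets $p\to\infty$, which kills every term with $i>0$ and isolates $\sum_{2j+4k+6l=n}C_{0jkl}{E_2}^j{E_4}^k{E_6}^l=0$; the coefficients with $i>0$ are then recovered by induction on $i$. You instead exploit only the $1$-periodicity of $E_2,E_4,E_6$ (they are functions of $q$), which fixes each $g_i$ while moving $z^{-i}$, and turn the single identity into the linear system $\sum_{i=0}^{I}(z+m)^{-i}g_i(z)=0$ for $m=0,\ldots,I$, which is Vandermonde in the values $x_m=(z+m)^{-1}$ --- pairwise distinct because $m\mapsto 1/(z+m)$ is injective, and well defined because $\Im(z+m)>0$. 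Both arguments bottom out in the algebraic independence of $E_2,E_4,E_6$, which the paper already invokes in the proof of Theorem \ref{thm201804241015}, so there is no circularity. Your route buys elementarity and finiteness: no limit, no induction, and no use of the transformation law of $E_2$ beyond periodicity; in effect you prove the slightly stronger statement that $z^{-1}$ (equivalently $1/\log q$) is transcendental over the ring of $1$-periodic holomorphic functions, which is exactly the form in which the lemma is deployed for the $f_2$-basis of $\calqrftwo$. The paper's route stays within the modular machinery it has already set up, but pays for that with an asymptotic step and an induction on $i$. The only point worth making explicit in your write-up is that the hypothesis is an identity of functions on all of $\calh$, which is what licenses evaluation at the translates $z+m$; with that noted, every step checks out.
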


\begin{proof}
For $p\in\zz$, set $A_p=(\begin{smallmatrix}1&p\\1&p+1\end{smallmatrix})\in\sltwoz$.
By replacing $z$ with $A_p z$ and dividing the both sides of Eq.\ (\ref{eq201802231002}) by $(z+p+1)^n$, we have
\begin{equation}
\sum_{2i+2j+4k+6l=n}C_{ijkl}(z+p)^{-i}(z+p+1)^{-i}\left(E_2+\frac{6}{\pi i(z+p+1)}\right)^j{E_4}^k{E_6}^l=0.
\end{equation}
Letting $p\to\infty$, we obtain $\sum_{2j+4k+6l=n}C_{0jkl}{E_2}^j{E_4}^k{E_6}^l=0$ and $C_{0jkl}=0$ for all $j,k,l\ge0$ with $2j+4k+6l=n$.
The result is proved by induction on $i$.
\end{proof}

Consider the following three homogeneous elements of $\End(H)$:
\begin{align}
e_1&\in \End^1(H),\quad (f,k)\mapsto (f,k+1),\\
e_2&\in \End^2(H),\quad (f,k)\mapsto (E_2 f,k+2),\\
f_2&\in \End^2(H),\quad (f,k)\mapsto \left(\frac{f}{\log q},k+2\right).
\end{align}
It is straightforward to show $[\dd,e_1]=-\frac{1}{12}e_1 e_2$, $[\dd,e_2]=-\frac{1}{12}({e_2}^2+e_4)$ and $[\dd,f_2]=-{f_2}^2-\frac{1}{6}f_2 e_2$.
Consider the following three graded $\cc$-subalgebra of $\End(H)$:
\begin{align}
\calqr&=\langle e_2,e_4,e_6,\dd \rangle,\\
\calqreone&=\langle e_1,e_2,e_4,e_6,\dd \rangle,\\
\calqrftwo&=\langle f_2,e_2,e_4,e_6,\dd \rangle.
\end{align}

\begin{thm}
\begin{enumerate}
\item
The set $\{{e_2}^j{e_4}^k{e_6}^l{\dd}^m\mid j,k,l,m\ge0\}$ forms a $\cc$-basis of $\calqr$.
\item
The set $\{{e_1}^i{e_2}^j{e_4}^k{e_6}^l{\dd}^m\mid i,j,k,l,m\ge0\}$ forms a $\cc$-basis of $\calqreone$.
\item
The set $\{{f_2}^i{e_2}^j{e_4}^k{e_6}^l{\dd}^m\mid i,j,k,l,m\ge0\}$ forms a $\cc$-basis of $\calqrftwo$.
\end{enumerate}
\end{thm}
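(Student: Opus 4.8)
The plan is to adapt the proof of Theorem \ref{thm201712301243} to each of the three algebras, viewing each as a commutative ``multiplication part'' together with the single derivation $\dd$. For part (1) set $\mathcal{N}=\langle e_2,e_4,e_6\rangle$; since $e_2,e_4,e_6$ are all multiplication-type operators they pairwise commute, so $\mathcal{N}$ is the commutative $\cc$-algebra spanned by the monomials $e_2^je_4^ke_6^l$. The key point for spanning is that $\mathcal{N}$ is stable under $[\dd,-]$: indeed $[\dd,e_2]=-\frac{1}{12}({e_2}^2+e_4)$, $[\dd,e_4]=-\frac{1}{3}e_6$ and $[\dd,e_6]=-\frac{1}{2}{e_4}^2$ all lie in $\mathcal{N}$, so by the Leibniz rule for $D[-]=[\dd,-]$ we get $[\dd,a]\in\mathcal{N}$ for every $a\in\mathcal{N}$. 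Hence $\dd(a{\dd}^m)=a{\dd}^{m+1}+[\dd,a]{\dd}^m\in\sum_{m'}\mathcal{N}{\dd}^{m'}$, and as $\sum_{m'}\mathcal{N}{\dd}^{m'}$ is visibly closed under left multiplication by $e_2,e_4,e_6$ as well and contains $1$, it is all of $\calqr$. This proves that $\{e_2^je_4^ke_6^l{\dd}^m\}$ spans. The identical argument proves spanning in parts (2) and (3): with $\mathcal{N}=\langle e_1,e_2,e_4,e_6\rangle$ one uses $[\dd,e_1]=-\frac{1}{12}e_1e_2\in\mathcal{N}$, and with $\mathcal{N}=\langle f_2,e_2,e_4,e_6\rangle$ one uses $[\dd,f_2]=-{f_2}^2-\frac{1}{6}f_2e_2\in\mathcal{N}$, noting in each case that $e_1$ (resp.\ $f_2$) commutes with $e_2,e_4,e_6$.

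For linear independence I would reduce, by homogeneity, to one relation supported in a single weight and then evaluate on test vectors. Assume a linear combination of the proposed basis monomials vanishes in $\End(H)$, and apply it to $(f,k_0)$ for a fixed $k_0$ and an arbitrary $f\in\hol$. The factor ${\dd}^m$ produces $\srdiff{m}{k_0}f$, while every multiplication generator merely multiplies the function component by its associated function and raises the weight. Collecting the outcome by the power $m$ of $\dd$ yields an identity $\sum_m P_m\,\srdiff{m}{k_0}f=0$ holding for all $f$, where each $P_m$ is a fixed function assembled from the multiplication generators. Since $\srdiff{m}{k_0}$, regarded as a differential operator in $q\,\frac{d}{dq}$, is triangular with leading term $\bigl(q\,\frac{d}{dq}\bigr)^m$, and a nonzero finite-order operator cannot annihilate every holomorphic function, the top-order coefficient must vanish; this forces $P_m=0$ first for the largest $m$ and then, by downward induction, for every $m$. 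The problem thus collapses to the linear independence of the coefficient functions occurring in each $P_m$.

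It is exactly at this final step that the three parts genuinely diverge. In parts (1) and (2) the coefficient functions are of the shape $E_2^jE_4^kE_6^l$; in part (2) the operator $e_1$ acts as the identity on the function component and contributes only to the weight, so within a fixed weight the exponent $i$ is determined by $(j,k,l,m)$ and produces no collision among monomials, and the required independence is precisely the algebraic independence of $E_2,E_4,E_6$. In part (3), however, $f_2$ multiplies the function component by $1/\log q=1/(2\pi i z)$, so the coefficient functions take the form $z^{-i}E_2^jE_4^kE_6^l$, and their independence is exactly the assertion of Lemma \ref{201712241805}. I expect this last case to be the only real obstacle, since $z^{-1}$ together with $E_2,E_4,E_6$ are not obviously independent, and it is Lemma \ref{201712241805}, proved by letting $p\to\infty$ along the matrices $A_p$, that removes the difficulty. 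Carrying out the weight-by-weight bookkeeping then yields the vanishing of all coefficients $C_{ijklm}$ in all three statements, completing the proof.
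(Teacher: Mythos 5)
Your proof is correct, and its skeleton matches the paper's: the paper disposes of this theorem by saying the proof is similar to that of Theorem \ref{thm201712301243}, with Lemma \ref{201712241805} used for (3), and your argument follows exactly that pattern (normal ordering via stability of the commutative part under $[\dd,-]$ for spanning; reduction of linear independence to independence of coefficient functions; Lemma \ref{201712241805} for the $f_2$ case). Where you genuinely diverge is the device that isolates the powers of $\dd$ in the independence step. The model proof in the paper tests the alleged relation on the specific vectors $(1,0)$ and $(I_{z_0})^k(\eta^{4k},2k)$, exploiting $\dd I_{z_0}=1$ and $\dd(\eta^{4k},2k)=0$ so that an \emph{ascending} induction on the $\dd$-degree peels off one layer of coefficients at a time; you instead test on $(f,k_0)$ for \emph{all} $f\in\hol$ and invoke the elementary fact that a nonzero finite-order linear differential operator, triangular in $q\,\frac{d}{dq}$ with holomorphic coefficients, cannot annihilate all of $\hol$, killing the $P_m$ by descending induction. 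Your route is more self-contained, needing neither the integro-differential operator $I_{z_0}$ nor a special $\dd$-constant such as $\eta^{4k}$; the paper's route buys explicit test vectors and transfers verbatim to the abstract integro-differential framework it sets up around $(H,\dd,I_{z_0})$. Your two bookkeeping observations --- that $e_1$ acts trivially on the function component so that within a fixed weight the exponent $i$ is recovered from $(j,k,l,m)$ and causes no collisions, and that for $\calqrftwo$ the coefficients become $z^{-i}E_2^jE_4^kE_6^l$ so that precisely the homogeneous independence statement of Lemma \ref{201712241805} is required --- are exactly the points the paper's terse proof leaves implicit, and you have both right.
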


\begin{proof}
The proof is similar to that of Theorem \ref{thm201712301243}.
Utilize Lemma \ref{201712241805} for (3).
\end{proof}

\begin{thm}
\label{thm201806010922}
\begin{enumerate}
\item
As graded $\cc$-algebras, $\calqr$ is isomorphic to $T(E_2\oplus E_4\oplus E_6\oplus \dd)/I_1$, where $I_1$ is the two-sided ideal generated by $[\dd,E_2]+\frac{1}{12}({E_2}\otimes E_2+E_4)$, $[\dd,E_4]+\frac{1}{3}E_6$, $[\dd,E_6]+\frac{1}{2}{E_4}\otimes E_4$, $[E_2,E_4]$, $[E_2,E_6]$ and $[E_4,E_6]$.
\item
As graded $\cc$-algebras, $\calqreone$ is isomorphic to $T(E_1\oplus E_2\oplus E_4\oplus E_6\oplus \dd)/I_2$, where $I_2$ is generated by $[\dd,E_1]+\frac{1}{12}E_1\otimes E_2$, $[\dd,E_2]+\frac{1}{12}({E_2}\otimes E_2+E_4)$, $[\dd,E_4]+\frac{1}{3}E_6$, $[\dd,E_6]+\frac{1}{2}{E_4}\otimes E_4$, $[E_1,E_2]$, $[E_1,E_4]$, $[E_1,E_6]$, $[E_2,E_4]$, $[E_2,E_6]$ and $[E_4,E_6]$.
\item
As graded $\cc$-algebras, $\calqrftwo$ is isomorphic to $T(F_2\oplus E_2\oplus E_4\oplus E_6\oplus \dd)/I_3$, where $I_3$ is generated by $[\dd,F_2]+{F_2}\otimes F_2+\frac{1}{6}F_2\otimes E_2$, $[\dd,E_2]+\frac{1}{12}({E_2}\otimes E_2+E_4)$, $[\dd,E_4]+\frac{1}{3}E_6$, $[\dd,E_6]+\frac{1}{2}{E_4}\otimes E_4$, $[F_2,E_2]$, $[F_2,E_4]$, $[F_2,E_6]$, $[E_2,E_4]$, $[E_2,E_6]$ and $[E_4,E_6]$.
\end{enumerate}
\end{thm}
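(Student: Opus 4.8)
The plan is to follow, in each of the three cases, the template of the proof of Theorem \ref{thm201801111434}(1). For the first case I would use the universal property of the tensor algebra to obtain a graded $\cc$-algebra homomorphism $T(E_2\oplus E_4\oplus E_6\oplus\dd)\to\End(H)$ sending each formal symbol to the corresponding operator ($E_2\mapsto e_2$, $E_4\mapsto e_4$, $E_6\mapsto e_6$, $\dd\mapsto\dd$). The generators of $I_1$ are precisely the relation $[\dd,e_2]=-\frac{1}{12}({e_2}^2+e_4)$ recorded just before the theorem, together with $[\dd,e_4]=-\frac{1}{3}e_6$ and $[\dd,e_6]=-\frac{1}{2}{e_4}^2$ from Eq.\ (\ref{eq201805150955}) and the mutual commutations $[e_2,e_4]=[e_2,e_6]=[e_4,e_6]=0$ of the multiplication operators, all of which are straightforward to verify. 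Hence these generators map to $0$, the homomorphism descends to $T(E_2\oplus E_4\oplus E_6\oplus\dd)/I_1$, and it is surjective onto $\calqr$ by the definition of $\calqr$ as the subalgebra generated by $e_2,e_4,e_6,\dd$. The homomorphisms onto $\calqreone$ and $\calqrftwo$ are built the same way, additionally using $[\dd,e_1]=-\frac{1}{12}e_1e_2$ and $[\dd,f_2]=-{f_2}^2-\frac{1}{6}f_2e_2$ respectively.

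It remains to prove injectivity, and the key step is a normal-form reduction inside the quotient. Using the commutations of $e_2,e_4,e_6$ (and $[e_1,e_j]=0$ or $[f_2,e_j]=0$ in the latter two cases) one sorts the commuting symbols, and using the three relations of the shape $[\dd,e_*]=(\text{terms of strictly lower }\dd\text{-degree})$ one pushes every factor $\dd$ to the right. I would argue by induction, primarily on the number of factors $\dd$ appearing in a word and secondarily on the number of adjacent pairs in the wrong order, that every element of the quotient is a $\cc$-linear combination of the ordered monomials ${e_2}^j{e_4}^k{e_6}^l{\dd}^m$ (with ${e_1}^i$, resp.\ ${f_2}^i$, prepended in the other two cases). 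These ordered monomials are exactly the images of the $\cc$-basis supplied by the theorem immediately preceding this one; since that basis is linearly independent, the spanning set of normal-form monomials in the quotient is linearly independent as well, so each surjection is a graded $\cc$-algebra isomorphism.

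The main obstacle is the termination of this reduction in the third case. Here $[\dd,f_2]=-{f_2}^2-\frac{1}{6}f_2e_2$ rewrites $\dd f_2$ as $f_2\dd-{f_2}^2-\frac{1}{6}f_2e_2$, so moving a single $\dd$ rightward past $f_2$ strictly lowers the $\dd$-degree but can raise the $f_2$-degree. I would resolve this by taking the outer induction on the $\dd$-degree alone: each rewriting either removes one out-of-order adjacent pair at fixed $\dd$-degree or produces words of strictly smaller $\dd$-degree, so the process is well-founded, while the commutations $[f_2,e_2]=[f_2,e_4]=[f_2,e_6]=0$ let the $f_2$-factors migrate freely to the left without reintroducing any $\dd$. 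The first case is the simplest, and the second differs only by the extra symbol $e_1$, which commutes with $e_2,e_4,e_6$ and is handled exactly as they are.
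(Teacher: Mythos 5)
Your proposal is correct and matches the paper's argument: the paper proves this result by declaring it ``similar to that of Theorem \ref{thm201801111434} (1)'', i.e.\ the canonical surjection from the quotient of the tensor algebra is injective because the ordered monomials span the quotient and map onto the $\cc$-basis furnished by the immediately preceding basis theorem. You have merely made explicit the normal-form rewriting and its termination (including the only delicate point, the $f_2$-degree-raising relation $[\dd,f_2]=-{f_2}^2-\frac{1}{6}f_2e_2$, correctly handled by inducting on $\dd$-degree first), details the paper leaves implicit.
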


\begin{proof}
The proof is similar to that of Theorem \ref{thm201801111434} (1).
\end{proof}

We set $E_1=(1,1)\in H_1$ and $F_2=(\frac{1}{\log q},2)\in H_2$.
It is easy to see that $\{E_1,E_2,E_4,E_6\}$ and $\{F_2,E_2,E_4,E_6\}$ are algebraically independent in $H$ (cf. Lemma \ref{201712241805}).
Note $\dd E_1=-\frac{1}{12}E_1 E_2$, $\dd F_2=-{F_2}^2-\frac{1}{6}F_2 F_2$ and $\dd E_2=-\frac{1}{12}({E_2}^2+E_4)$.

\begin{thm}
\begin{enumerate}
\item
As $\cc$-algebras, $\calqr$ is isomorphic to $\calqm[\xi;1,\dd]$.
\item
As $\cc$-algebras, $\calqreone$ is isomorphic to $\cc[E_1,E_2,E_4,E_6][\xi;1,\dd]$.
\item
As $\cc$-algebras,  $\calqrftwo$ is isomorphic to $\cc[F_2,E_2,E_4,E_6] [\xi;1,\dd]$.
\end{enumerate}
\end{thm}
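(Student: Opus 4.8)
The plan is to treat all three parts in parallel, mirroring the proof of Theorem \ref{thm201805311336}. The first point to settle is that in each case the relevant coefficient ring---$\calqm=\cc[E_2,E_4,E_6]$ for (1), $\cc[E_1,E_2,E_4,E_6]$ for (2), and $\cc[F_2,E_2,E_4,E_6]$ for (3)---is a graded commutative integral domain on which $\dd$ restricts to a graded derivation of weight $2$. Since $\dd$ is a derivation of $H$, it suffices to check that $\dd$ maps each generator back into the ring in question: the relations (\ref{eq201805150955}) give $\dd E_4=-\tfrac13 E_6$ and $\dd E_6=-\tfrac12{E_4}^2$, while $\dd E_2=-\tfrac1{12}({E_2}^2+E_4)$, $\dd E_1=-\tfrac1{12}E_1E_2$ and $\dd F_2=-{F_2}^2-\tfrac16 F_2E_2$ handle the remaining generators. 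The algebraic independence of $\{E_1,E_2,E_4,E_6\}$ and of $\{F_2,E_2,E_4,E_6\}$ in $H$ (Lemma \ref{201712241805} and the surrounding remarks) shows these really are polynomial rings, hence integral domains, so the graded skew polynomial rings $\calqm[\xi;1,\dd]$, $\cc[E_1,E_2,E_4,E_6][\xi;1,\dd]$ and $\cc[F_2,E_2,E_4,E_6][\xi;1,\dd]$ are well defined.

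Next I would write down the candidate isomorphism on the distinguished monomial bases. By the $\cc$-basis result established in the preceding theorem, $\{{e_2}^j{e_4}^k{e_6}^l{\dd}^m\}$ is a $\cc$-basis of $\calqr$, and likewise $\{{e_1}^i{e_2}^j{e_4}^k{e_6}^l{\dd}^m\}$ and $\{{f_2}^i{e_2}^j{e_4}^k{e_6}^l{\dd}^m\}$ are $\cc$-bases of $\calqreone$ and $\calqrftwo$. On the other side, by the defining property of a skew polynomial ring, $\{{E_2}^j{E_4}^k{E_6}^l\xi^m\}$ is a $\cc$-basis of $\calqm[\xi;1,\dd]$, and similarly in the other two cases. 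Hence the $\cc$-linear map sending ${E_2}^j{E_4}^k{E_6}^l\xi^m\mapsto{e_2}^j{e_4}^k{e_6}^l{\dd}^m$ (and its evident analogues) is a $\cc$-linear bijection.

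Finally I would promote this bijection to a $\cc$-algebra isomorphism by checking that it respects the defining relations. In $\calqm[\xi;1,\dd]$ the only nontrivial relations are the mutual commutativity of $E_2,E_4,E_6$ and the skew rule $\xi a=a\xi+\dd a$ for $a$ in the coefficient ring; under the correspondence these become the commutativity of the multiplication operators $e_2,e_4,e_6$ in $\End(H)$ and the identity $\dd\circ a=a\circ\dd+[\dd,a]$, which holds because $[\dd,a]$ is exactly multiplication by $\dd a$. The same verification, using the commutation relation of $e_1$ (resp.\ $f_2$) recorded before Theorem \ref{thm201806010922}, disposes of parts (2) and (3). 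I expect the only real work to be this bookkeeping---confirming that $\dd$ genuinely preserves each coefficient ring and that the commutators computed in $\End(H)$ match the derivation $\dd$ term by term; all the substantive content is already packaged into the basis theorem and the commutation relations, so no new construction or estimate is needed.
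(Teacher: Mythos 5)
Your proposal is correct and follows essentially the same route as the paper, which simply says the proof is analogous to that of Theorem \ref{thm201805311336}: match the monomial $\cc$-bases (using the preceding basis theorem) and observe that the commutation relations of $e_2,e_4,e_6$ (resp.\ $e_1$, $f_2$) and $\dd$ in $\End(H)$ coincide with the skew relation $\xi a=a\xi+\dd a$. Your explicit verification that $\dd$ restricts to a graded derivation of each coefficient ring, and that these are genuine polynomial rings via Lemma \ref{201712241805}, is exactly the bookkeeping the paper leaves implicit.
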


\begin{proof}
The proof is similar to that of Theorem \ref{thm201805311336}.
\end{proof}

The grade-preserving endomorphisms of $\calr$, $\calqr$, $\calqreone$ and $\calqrftwo$ can be directly calculated by Theorem \ref{thm201806010922}.
Let $\mathcal{X}$ be either $\calr$, $\calqr$, $\calqreone$ or $\calqrftwo$.
Then the monoid of grade-preserving endomorphisms of $\mathcal{X}$ consists of the following:
For $\mathcal{X}=\calr$,
\begin{align}
(e_4,e_6,\dd)&\mapsto(a^2e_4,a^3e_6,a\dd).
\end{align}
For $\mathcal{X}=\calqr$, 
\begin{align}
(e_2,e_4,e_6,\dd)&\mapsto(ae_2,a^2e_4,a^3e_6,a\dd+be_2),\\
&\mapsto(0,0,0,a\dd+be_2).
\end{align}
For $\mathcal{X}=\calqreone$, 
\begin{align}
(e_1,e_2,e_4,e_6,\dd)&\mapsto(ae_1,be_2,b^2e_4,b^3e_6,b\dd+c{e_1}^2+de_2),\\
&\mapsto(0,0,0,0,b\dd+c{e_1}^2+de_2).
\end{align}
For $\mathcal{X}=\calqrftwo$, 
\begin{align}
(f_2,e_2,e_4,e_6,\dd)&\mapsto(af_2,ae_2,a^2e_4,a^3e_6,a\dd+bf_2+ce_2),\\
&\mapsto(-af_2,12af_2+ae_2,a^2e_4,a^3e_6,a\dd+bf_2+ce_2), \label{eq201802221820} \\
&\mapsto(0,12af_2+ae_2,a^2e_4,a^3e_6,a\dd+bf_2+ce_2),\\
&\mapsto(0,ae_2,a^2e_4,a^3e_6,a\dd+bf_2+ce_2),\\
&\mapsto(0,0,0,0,a\dd+bf_2+ce_2),
\end{align}
where $a,b,c,d\in\cc$ are arbitrary.

As an application of the endomorphisms above, we prove Proposition \ref{prop201805061103}.
Note that $(\dd-a {e_1}^2)(q^a,0)=0$ and $(\dd-a f_2)((\log q)^a,0)=0$ for $a\in\cc$.

\begin{prop}
\label{prop201805061103}
Let $F_i(x,y,z),G(x,y,z)\in\cc[x,y,z]$ be polynomials for $1\le i\le n$.
Suppose that $\wt (F_i(e_2,e_4,e_6))+2i=l$ is constant for $0\le i\le n$, and $\wt (G(e_2,e_4,e_6))=k$.
If $\sum_{i=0}^n F_i(E_2,E_4,E_6)\srdiff{i}{k} G(E_2,E_4,E_6)=0$, then
\begin{equation}
\sum_{i=0}^n F_i\left(\frac{12}{\log q}+E_2,E_4,E_6\right)\left(D_k-\frac{a}{\log q}\right)^{(i)} \left((\log q)^a G\left(\frac{12}{\log q}+E_2,E_4,E_6\right)\right)=0
\end{equation}
for $a\in\cc$, where $(D_k-\frac{a}{\log q})^{(i)}=(D_{k+2i-2}-\frac{a}{\log q})\cdots (D_{k+2}-\frac{a}{\log q})(D_k-\frac{a}{\log q})$.
\end{prop}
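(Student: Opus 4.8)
The plan is to recognise the hypothesis as a single annihilation relation in $H$ and then transport it by the $S$-symmetry endomorphism \eqref{eq201802221820}, finally evaluating at the weight-$0$ vector $((\log q)^a,0)$. First I would form the operator
\[
P=\sum_{i=0}^n F_i(e_2,e_4,e_6)\,{\dd}^i\,G(e_2,e_4,e_6)\in\calqr\subset\calqrftwo .
\]
Applying $P$ to $(1,0)$ and using that $G(e_2,e_4,e_6)(1,0)=(G(E_2,E_4,E_6),k)$, that ${\dd}^i$ sends this to $(\srdiff{i}{k}G(E_2,E_4,E_6),k+2i)$, and that $F_i(e_2,e_4,e_6)$ multiplies by $F_i(E_2,E_4,E_6)$ while raising the weight by $\wt(F_i(e_2,e_4,e_6))=l-2i$, I find that every summand lands in the single space $H_{k+l}$. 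Hence $P(1,0)=\bigl(\sum_i F_i(E_2,E_4,E_6)\srdiff{i}{k}G(E_2,E_4,E_6),\,k+l\bigr)$, so the hypothesis is exactly the statement $P(1,0)=0$.

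Next I would apply the grade-preserving endomorphism $\sigma$ of $\calqrftwo$ given by the instance of \eqref{eq201802221820} with $e_2\mapsto 12f_2+e_2$, $f_2\mapsto -f_2$, $e_4\mapsto e_4$, $e_6\mapsto e_6$ and $\dd\mapsto\dd-af_2$ (i.e.\ the parameter in \eqref{eq201802221820} set to $1$, the coefficient of $f_2$ in the image of $\dd$ set to $-a$, the coefficient of $e_2$ set to $0$). Because $\sigma$ is an algebra homomorphism and $\sigma(e_2)=12f_2+e_2$ is multiplication by $\tfrac{12}{\log q}+E_2$, one gets $\sigma(P)=\sum_i F_i(\tfrac{12}{\log q}+E_2,E_4,E_6)(\dd-af_2)^iG(\tfrac{12}{\log q}+E_2,E_4,E_6)$. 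Since $(\dd-af_2)$ acts on a weight-$k'$ element as $D_{k'}-\tfrac{a}{\log q}$, the power $(\dd-af_2)^i$ reproduces $(D_k-\tfrac{a}{\log q})^{(i)}$, and evaluating at $((\log q)^a,0)$ gives termwise exactly the left-hand side of the asserted identity. Thus it remains only to prove $\sigma(P)((\log q)^a,0)=0$.

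The crux is that $\sigma$ is \emph{inner} on $\End(H)$: I would exhibit an invertible $\Phi\in\End(H)$ with $\sigma(x)=\Phi x\Phi^{-1}$, namely $\Phi=m_a\circ R_S$, where $R_S(f)=fS$ is right multiplication by $S=(\begin{smallmatrix}0&-1\\1&0\end{smallmatrix})$ and $m_a$ is multiplication by $((\log q)^a,0)\in H_0$. That $R_S$ conjugates the generators as claimed follows from $E_4|_4S=E_4$, $E_6|_6S=E_6$, the $S$-instance of the intertwining \eqref{eq201712171730} (which yields $R_S\dd=\dd R_S$), together with $E_2|_2S=E_2+\tfrac{12}{\log q}$ and $\tfrac1{\log q}|_2S=-\tfrac1{\log q}$; a short derivation computation gives $\dd\,m_a=m_a(\dd+af_2)$, hence $m_a\dd m_{-a}=\dd-af_2$, which supplies the last generator. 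Granting $\sigma(x)=\Phi x\Phi^{-1}$, the conclusion is immediate, since $\Phi^{-1}((\log q)^a,0)=R_S^{-1}\bigl(m_{-a}((\log q)^a,0)\bigr)=R_S^{-1}(1,0)=(1,0)$ and therefore
\[
\sigma(P)\,((\log q)^a,0)=\Phi\,P\,\Phi^{-1}((\log q)^a,0)=\Phi\bigl(P(1,0)\bigr)=\Phi(0)=0 .
\]

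The main obstacle is precisely the verification that $\sigma$ is conjugation by $\Phi$: concretely this is the bookkeeping that right multiplication by $S$ sends $E_2$ to its quasimodular $S$-transform $E_2+\tfrac{12}{\log q}$ and $\tfrac1{\log q}$ to $-\tfrac1{\log q}$ while commuting with $\dd$, and that twisting by $(\log q)^a$ shifts $\dd$ by $-af_2$ (the relation behind the stated fact $(\dd-af_2)((\log q)^a,0)=0$). Equivalently, and perhaps more transparently, the argument can be run entirely on functions: apply $|_{k+l}S$ to the hypothesis to replace every $E_2$ by $E_2+\tfrac{12}{\log q}$, then multiply by $(\log q)^a$ and invoke the identity $(D_k-\tfrac{a}{\log q})^{(i)}\bigl((\log q)^a h\bigr)=(\log q)^a\srdiff{i}{k}h$. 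These two computations — the $S$-transformation of $E_2$ and the $(\log q)^a$-twisting of the Ramanujan--Serre operators — are the technical heart either way.
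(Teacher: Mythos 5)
Your proof is correct, but it differs from the paper's at the decisive step. Both arguments encode the hypothesis as $P(1,0)=0$ for $P=\sum_{i=0}^nF_i(e_2,e_4,e_6)\,\dd^i\,G(e_2,e_4,e_6)\in\calqr$ and push $P$ through the endomorphism \eqref{eq201802221820} of $\calqrftwo$, but the paper never shows that endomorphism is inner. Instead it \emph{factors}: since $E_2,E_4,E_6$ are algebraically independent, $P(1,0)=0$ forces the $\dd^0$-coefficient of $P$ to vanish, so $P=x\dd$ for some $x\in\calqr$; applying the endomorphism yields $\sigma(P)=x'(\dd-af_2)$, and the conclusion follows from the single annihilation identity $(\dd-af_2)((\log q)^a,0)=0$ recorded just before the proposition. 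Your route replaces this factorization with the identification $\sigma(x)=\Phi x\Phi^{-1}$ for $\Phi=m_a\circ R_S$, which collapses the whole proof to $\Phi\bigl(P(1,0)\bigr)=0$; your generator computations all check out ($E_2|_2S=E_2+\tfrac{12}{\log q}$ from \eqref{eq201807171108}, $\tfrac{1}{\log q}\big|_2S=-\tfrac{1}{\log q}$, $R_S\dd=\dd R_S$ from \eqref{eq201712171730} together with item (5) of the proposition in Subsection \ref{subsection201801141056}, and $m_a\dd m_{-a}=\dd-af_2$), and your weight bookkeeping and the final evaluation at $((\log q)^a,0)$ agree term by term with the asserted identity. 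What your approach buys: a conceptual explanation of \emph{why} the endomorphism \eqref{eq201802221820} exists at all (it is conjugation by the $S$-slash composed with a $(\log q)^a$-twist), plus, in your function-level variant, a proof that bypasses the operator algebras entirely. What the paper's approach buys: it stays inside $\calqrftwo$ and needs no global invertibility of analytic operators. That invertibility is the one point you should make explicit: for non-integral weights $R_{S^{-1}}$ is \emph{not} automatically the inverse of $R_S$, since the cocycle identity $f(AB)=(fA)B$ is asserted in the paper only on $\bigoplus_{n\in\zz}H_n$; you need instead the direct observation that $f\mapsto f|_kS$ is bijective on each $H_k$ (solve $f(Sz)=z^kg(z)$, using that $S$ is a bijection of $\calh$ and $z^k$ is holomorphic and nonvanishing there), and likewise that $m_a$ is invertible because $(\log q)^{-a}(\log q)^a=1$ under the paper's branch conventions. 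Since both verifications are immediate, this is a presentational gap rather than a mathematical one.
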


\begin{proof}
Since $\sum_{i=0}^n F_i(e_2,e_4,e_6){\dd}^i G(e_2,e_4,e_6)(1,0)=0$, $\sum_{i=0}^n F_i(e_2,e_4,e_6){\dd}^i G(e_2,e_4,e_6)\allowbreak=x\dd$ for some $x\in\calqr$.
By applying the endomorphism (\ref{eq201802221820}) with $a=1$ and $c=0$, we obtain $\sum_{i=0}^n F_i(12 f_2+e_2,e_4,e_6)(\dd+b f_2)^i G(12 f_2+e_2,e_4,e_6)=x'(\dd+b f_2)$ for some $x'\in\calqrftwo$.
Replace $b$ with $-a$.
Then $0=\sum_{i=0}^n F_i(12 f_2+e_2,e_4,e_6)(\dd-a f_2)^i G(12 f_2+e_2,e_4,e_6)((\log q)^a,0)=(\sum_{i=0}^n F_i(\frac{12}{\log q}+E_2,E_4,E_6)(D_k-\frac{a}{\log q})^{(i)} ((\log q)^a G(\frac{12}{\log q}+E_2,E_4,E_6)),\allowbreak k+l)$.
\end{proof}

\section*{Acknowledgement}
The author is very grateful to his advisor Dr. Atsushi Matsuo for his enlightening advice and the correction of the paper.
The author also thanks Leading Graduate Course for Frontiers of Mathematical Sciences and Physics, the University of Tokyo for financial support.

\end{document}